\documentclass[12pt]{amsart}

\usepackage[latin1]{inputenc}
\usepackage[english]{babel}
\usepackage{graphicx}
\usepackage{amsmath,amsfonts,amssymb,amsthm}
\usepackage[pdfauthor={Kleyber Cunha and Daniel Smania},%
pdftitle={Universality and Rigidity for piecewise smooth homeomorphisms on the circle},%
pdfsubject={In this work we study the renormalization operator acting on piecewise smooth homeomorphisms on the circle, that turns out to be essentially the study of Rauzy-Veech renormalizations of generalized interval exchanges maps  with genus zero. In particular we show that renormalizations of such maps with  zero mean nonlinearity and satisfying certain smoothness and combinatorial assumptions converges to the set of  piecewise affine interval exchange maps.   MSC 37E10, 37E05, 37E20, 37C05, 37B10 },%
pdfkeywords={renormalization, interval exchange transformations, Rauzy-Veech induction, universality, homeomorphism on the circle, convergence}]{hyperref}
\usepackage{latexsym}
\usepackage{epsf}
\usepackage{epstopdf}
\usepackage{psfrag}
\usepackage{indentfirst}
\usepackage{eucal}
\usepackage{fontenc}
\usepackage{float}
\usepackage[all]{xy}

\def\ve{\varepsilon}
\def\al{\alpha}
\def\A{\mathcal{A}}
\def\T{\mathcal{T}}
\def\la{\lambda}

\def\nor{\mathrm{nor}}
\def\im{\mathrm{Im}}
\def\d{\mathrm{d}}
\def\O{\mathrm{O}}
\def\Om{\Omega}

\def\Z{\mathcal{Z}}
\def\R{\mathbb{R}}
\def\N{\mathbb{N}}
\def\rot{\mathrm{rot}}
\def\T{\mathcal{T}}
\def\Hom{\mathrm{Hom}}
\newcommand{\dem}{\begin{proof}}
\newcommand{\cqd}{\end{proof}}

\theoremstyle{plain}
\newtheorem{lemma}{\bf Lemma}[section]

\newtheorem{rem}{Remark}[section]
\newtheorem{pro}[lemma]{Proposition}
\newtheorem{maintheorem}{Theorem}

\theoremstyle{definition}

\title{Universality and Rigidity for piecewise smooth homeomorphisms on the circle}
\author{Kleyber Cunha and Daniel Smania}
\keywords{ renormalization, interval exchange transformations, Rauzy-Veech induction, universality, homeomorphism on the circle, convergence}
\subjclass{37E10, 37E05, 37E20, 37C05, 37B10}

\address{
Departamento de Matem\'atica,
Universidade Federal da Bahia, Av. Ademar de Barros s/n,
CEP 40170-110
Salvador-BA, Brazil}
\email{kleyber@ufba.br}

\address{
Departamento de Matem\'atica,
ICMC-USP, Caixa Postal 668,  S\~ao Carlos-SP,
CEP 13560-970
S\~ao Carlos-SP, Brazil}
\email{smania@icmc.usp.br}
\urladdr{www.icmc.usp.br/$\sim$smania/}

\date{\today} 
\thanks{ This work is based on the Ph. D. Thesis of the first author.    K. C was partially supported by FAPESP 07/01045-7.
D.S. was partially supported by FAPESP 2008/02841-4 and 2010/08654-1, CNPq 310964/2006-7 and 303669/2009-8.}

\begin{document}

\maketitle

\begin{abstract}
\footnotesize{In this work, we find sufficient conditions for two piecewise $C^{2+\nu}$ homeomorphism $f$ and $g$ of the circle to be $C^1$ conjugate. Besides the restrictions on the combinatorics of the maps (we assume that maps have bounded ``rotation number'' ), and necessary conditions on the one-side derivatives of points where $f$ and $g$ are not differentiable, we also assume zero mean nonlinearity for $f$ and $g.$

The proof is based on the study of Rauzy-Veech renormalization of genus one generalized interval exchange maps with certain restrictions on its combinatorics. }
\end{abstract}
\bigskip

\medskip
\tableofcontents


\section{Introduction and results}
\label{}
Consider the following rigidity problem.  Let $f,g :X\to X$ be two (piecewise) smooth dynamical systems that are conjugated by an orientation  preserving homeomorphism, i.e., there is $h:X\to X$ such that $f\circ h =h\circ g.$ On what conditions is  the conjugation smooth (for instance $C^1$)? When $X=\mathbb{S}^1$ and $f,g$ are smooth diffeomorphisms there are many results of rigidity, for example, \cite{herman}, \cite{sinai}, \cite{yoc}, \cite{khanin1}, \cite{katz}. In this article we study the rigidity problem for piecewise smooth homemorphisms on the circle.

The map $f:\mathbb{S}^1\to\mathbb{S}^1$ is a piecewise smooth homemomorphism on the circle if $f$ is a homeomorphism, has jumps in the first derivative on finitely many points, that we call break points, and $f$ is smooth outside its break points. The set $BP_f=\{x\in\mathbb{S}^1:\; BP_f(x):=Df(x_-)/ Df(x_+)\neq 1\}$ is called the set of break points of $f$ and the number $BP_f(x)$ is called the break of $f$ at $x.$  Denote $BP_f=\{x_1,...,x_m\}$ and $BP_g=\{y_1,...,y_n\}.$

We say that two piecewise smooth homeomorphisms on the circle are {\it break-equivalents} if  there exists a topological conjugacy $h$ such that  $h(BP_f)=BP_g$ and   $BP_f(x_i)=BP_g(h(x_i))$. It is easy to see that  there is a $C^1$  conjugacy between $f$ and $g$ then  $f$ and $g$ are {\it break-equivalents}.

As in \cite{cunhasmania} the key  idea is to consider piecewise smooth homeomorphisms on the circle as  generalized interval exchange transformations, g.i.e.t. for short. Let $I$ be an interval  and let  $\A$ be a finite set (the {\it  alphabet}) with $d~\geq~2$ elements and $\mathcal{P} =\{I_\al: \al\in\A\}$ be an $\mathcal{A}-${\it indexed} partition of $I$ into subintervals\footnote{All the subintervals will be bounded, closed on the left and open on the right.}. We say that the triple $(f,\mathcal{A}, \mathcal{P})$, where  $f:I\to I$ is a bijection, is a g.i.e.t. with $d$ intervals, if $f|_{I_\al}$ is an orientation-preserving homeomorphism for each $\alpha \in \mathcal{A}$. The order of subinterval in the domain and in the image of $f$ constitue the combinatorial data of $f,$ denoted by $\pi=(\pi_0,\pi_1),$ where $\pi_i: \A\to \{1,...,d\}$ is a bijection for $i=1,2$ and $\pi_0, \pi_1$ given the order of subintervals $I_\al$ and $f(I_\al)$ in $I,$ respectively. For the explicit formula of $\pi=(\pi_0,\pi_
1)$ see \cite{cunhasmania}. We always assume that the combinatorial data is irreducible, i.e., $\pi_1\circ\pi_0^{-1}(\{1,...,s\})\neq\{1,...,s\}$ for all $1\leq s \leq d-1.$

There is a renormalization scheme in the space of g.i.e.m. called the Rauzy-Veech induction (\cite{rauzy}, \cite{veech}), that  associates with $f$ a sequence of first return maps $f_n=R^n(f)$ to  a nested sequence of  intervals $I^n$ with the same left endpoint of  $I.$ The map $f_n$ is again g.i.e.m. with the same alphabet $\A$ but the combinatorial data may be different.

More specifically, denoting by $\al(0),\al(1)\in\A$ the letters such that $\pi_0(\al(0))=d$ and $\pi_1(\al(1))=d,$ we compare the lenght of the intervals $I_{\al(0)}$ and $f(I_{\al(1)}).$ If $|I_{\al(0)}|>|f(I_{\al(1)})|$ (resp. $|I_{\al(0)}|>|f(I_{\al(1)})|$ ) we say that $f$ has type 0 (resp. type 1) and that the letter $\al(0)$ is the winner (resp. loser) and that the letter $\al(1)$ is the loser (resp. winner). Then putting $I^1=I\setminus f(I_{\al(1)})$ (resp. $I^1=I\setminus I_{\al(0)}$), we have that $R(f)$ is the first return map of $f$ to the interval $I^1$ and so on.   We say that $f$ has no connections if the orbits of the boundary of each $I_\al$ are distinct whenever possible.  It  has been  established by \cite{keane} that if  an interval exchange transformation $f$ has no connections then $f$ is infinitely renormalizable. If a g.i.e.m. $f$ is infinitely renormalizable  then the  sequence $(\pi^n,\ve^n)_n$ of combinatorial data and types of $R^n(f)$ is called {\it combinatorics} of $f.$ For 
more details about the Rauzy-Veech induction the reader may consult, for example, \cite{cunhasmania}, \cite{yoccoz}, \cite{viana}.

For each i.e.t. $f$ it is  possible to associate a genus $g$ that corresponds to the genus of translate surface associate to $f$ \cite{zorich}. This genus is invariant under Rauzy-Veech renormalization. Indeed  $f$ has {\it genus one} if $f$ has at most two discontinuities.  In a similar way, we will say that a g.i.e.m.  has genus one if $f$ has at most two discontinuites.  If $f$ is a homeomorphism on the circle, then $f$ has genus one as a g.i.e.m.

Let $H$ be  a non-degenerate interval, let  $g:H\to\mathbb{R}$ be a homeomorphism  and let $J\subset H$ be an interval. We define the {\it Zoom} of $g$ in $H,$  denoted by $\Z_H(g),$ the transformation $\Z_H(g)=A_1\circ g\circ A_2,$ where $A_1$ and $A_2$ are orientation-preserving affine maps, which sends $[0,1]$ into $H$ and $g(H)$ into $[0,1]$ respectively. So we can identify  a  g.i.e.m.  $f$ defined in the interval $[0,1]$ with the  quadruple
$$\big(\pi, (|I_\al|)_{\al\in\A}, (|f(I_\al)|)_{\al\in\A},(Z_{I_\al}f)_{\al\in\A}\big)\in \Pi_d\times \Delta_{\mathcal{A}}\times\Delta_{\mathcal{A}}\times \Hom_+([0,1]),$$
where $\Hom_+([0,1])$ is the set of  orientation preserving homeomorphisms $h:[0,1]\to[0,1]$ such that $h(0)=0$ and $h(1)=1,$  $|J|$ denote the length of interval $J$ and

$$\Delta_{\mathcal{A}}=\{(x_\alpha)_{\alpha\in \mathcal{A}} \in \mathbb{R}_+^{\mathcal{A}} \text{ s.t. } x_\alpha> 0 \text{ and } \sum_\alpha x_\alpha=1  \}.$$

 If we change the set  $\Hom_+([0,1])$ by the set $\mathrm{Diff}^r_+([0,1])$ of orientation preserving diffeomorphism $h$ of class $C^r$ such that $h(0)=0$ and $h(1)=1$ we get the space of g.i.e.m. of class $C^r.$ If we change the set $\Hom_+([0,1])$ by the set $\{\mathrm{Id}\}$ we have the space of affine i.e.t.. If in addition we replace  the set $ \Delta_{\mathcal{A}}\times\Delta_{\mathcal{A}}$ by the set $\{(\la,\la), \la\in\Delta_{\mathcal{A}}\}$ we get the space of standard i.e.t..

In the set of g.i.e.t. of class $C^r$, $r\geq0$, we define the distance in the $C^r$ topology by
\begin{eqnarray}
\label{distancia}
d_{C^r}(f,g)&:=&\max_{\al\in\A}\left\{\|Z_{I_\al}f-Z_{\tilde{I}_\al}g\|_{C^r}\right\} +\nonumber\\
& & \left\|(I_\al)_{\al\in\A}-(\tilde{I_{\al}})_{\al\in\A}\right\|_1+  \left\|(f(I_\al))_{\al\in\A}-(g(\tilde{I_{\al}}))_{\al\in\A}\right\|_1,
\end{eqnarray}
where $\|\cdot\|_{C^r}$ denote the sup-norm in the $C^r$ topology and $\|\cdot\|_1$ denote the sum-norm.

Let  $\al^n(\ve^n), \al^n(1-\ve^n)$ be the   winner and loser letters  of $R^n(f)$, where $\ve^n\in\{0,1\}$ is its  type.
As defined in \cite{cunhasmania} we say that infinitely renormalizable g.i.e.m. $f$ has $k-${\it bounded combinatorics} if for each $n$ and $\beta, \gamma  \in \mathcal{A}$ there exists $n_1, p \geq 0$, with $|n-n_1|< k$ and $|n-n_1-p|< k$, such that
$\alpha^{n_1}(\ve^{n_1})=\beta$, $\alpha^{n_1+p}(1-\ve^{n_1+p})=\gamma$ and $$\alpha^{n_1+i}(1-\ve^{n_1+p})= \alpha^{n_1+i+1}(\ve^{n_1+i})$$
for every $0\leq i < p$.

Let $\mathcal{B}_k^{2+\nu}$, $k \in \mathbb{N}$ and  $\nu > 0$,  be the set of g.i.e.m.  $f:I\to I$ such that
\begin{itemize}
\item[(i)]  For each $\alpha \in \mathcal{A}$ we can extend $f$ to $\overline{I_\al}$ as an orientation-preserving diffeomorphism of class $C^{2+\nu}$;\\
\item[(ii)] the g.i.e.m. $f$  has  $k-$bounded combinatorics;\\
\item[(iii)] The map   $f$ has  genus one and has no connections;
\end{itemize}

In \cite{cunhasmania} the authors show that if $f\in\mathcal{B}_k^{2+\nu}$ then $R^n(f)$ converges to a $3(d-1)-$dimensional space of the fractional linear g.i.e.t. Moreover if mean-nonlinearity is zero then $R^n(f)$ converge to a $2(d-1)-$dimensional space of the affine interval exchange maps. Our main results are:
 \begin{maintheorem}
\label{teo1}
Let $(f,\mathcal{A}, \{ I_\alpha\}_{\alpha \in \mathcal{A}}) \in \mathcal{B}_k^{2+\nu}$ be such that
 \begin{equation}\label{nonlin0}  \int \frac{D^2f(x)}{Df(x)} dx =0.\end{equation}
Then there exists an affine i.e.t. $(f_A,\mathcal{A}, \{ \tilde{I}_\alpha\}_{\alpha \in \mathcal{A}}),$ i.e., $f_A|_{\tilde{I}_\alpha}$ is affine for each $\alpha \in \mathcal{A},$ and $0<\la<1$ such that
\begin{itemize}
 \item[(i)] $f_A$ has the same combinatorics of $f;$
 \item[(ii)] $d_{C^2}(R^nf,R^nf_A)=\O(\lambda^{\sqrt{n}}).$
 \end{itemize}
\end{maintheorem}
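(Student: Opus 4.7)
The plan is to construct $f_A$ explicitly from the combinatorial and length data of $f$, show that this single affine i.e.t.\ satisfies $R^n(f_A)$ agreeing with $R^n(f)$ in combinatorics and length tuples for all $n$, and then reduce the $C^2$ comparison to an estimate on how close the zoom components of $R^n(f)$ are to the identity.

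First I would let $f_A$ be the affine i.e.t.\ on $[0,1]$ with the same combinatorial data $\pi$ and the same length tuples $(|I_\alpha|)_{\alpha\in\A}$ and $(|f(I_\alpha)|)_{\alpha\in\A}$ as $f$; in the quadruple representation, its zoom component is identically $\mathrm{Id}$. The crucial combinatorial remark is that a single Rauzy--Veech step is a deterministic function of the combinatorial data and the two length tuples alone: its type $\ve$ is determined by comparing $|I_{\al(0)}|$ with $|f(I_{\al(1)})|$, and the new tuples are given by an affine formula. Thus a short induction on $n$ shows that $R^n(f)$ and $R^n(f_A)$ share the same combinatorial data $\pi^n$, types $\ve^n$, and length tuples, while $R^n(f_A)$ remains affine and hence has all zoom components equal to $\mathrm{Id}$. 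This already proves~(i), and in the distance~(\ref{distancia}) the two length-tuple summands vanish identically, leaving
\[
d_{C^2}(R^n f, R^n f_A) \;=\; \max_{\al\in\A}\left\|Z_{I_\al^n}R^n f - \mathrm{Id}\right\|_{C^2}.
\]

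To bound this quantity I would appeal to the convergence result of~\cite{cunhasmania}, which under hypothesis~(\ref{nonlin0}) asserts precisely that the zooms $Z_{I_\al^n}R^n f$ tend to the identity in $C^2$. What remains---and is not formally automatic from a qualitative convergence statement---is to extract the quantitative stretched-exponential rate. I expect this to follow from refining the estimates already present in~\cite{cunhasmania}: bounded combinatorics delivers exponential contraction of the cylinder lengths $|I_\al^n|\lesssim\mu^n$; the $C^{2+\nu}$ regularity together with zero mean nonlinearity lets one bound the nonlinearity of $R^n f$ on each $I_\al^n$ via a Denjoy-type distortion argument in which the dominant linear mode is killed by~(\ref{nonlin0}).

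The main obstacle I foresee is producing the exponent $\sqrt{n}$ rather than a plain exponential $\la^n$. This kind of bound typically arises by comparing orbits not all the way from level $0$ to level $n$ at once, but by splitting the renormalization window at an optimally chosen intermediate level $k$: on the first $k$ steps one pays a small error coming from the nonlinearity, of size $\mu^{k}$, while on the remaining $n-k$ steps one tolerates an expansion of this error by at most $\nu^{n-k}$, a factor controlled by the hyperbolicity of Rauzy--Veech on the affine (finite-dimensional) data associated with bounded combinatorics. Balancing the two contributions by choosing $k\sim c\sqrt n$ yields a bound of the form $\la^{\sqrt n}$ with $\la<1$. Turning this heuristic into a clean $C^2$ estimate on each zoom $Z_{I_\al^n}R^n f$---as opposed to a merely $C^0$ or $C^1$ estimate---is where the technical heart of the argument lies, and that is the step on which I would concentrate the most care.
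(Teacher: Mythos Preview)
Your central claim---that a Rauzy--Veech step is a deterministic function of the combinatorial datum and the two length tuples---is false for generalized interval exchange maps, and this collapses the whole argument. Consider a type~$0$ step: the first return map satisfies $R(f)|_{I^1_{\al(1)}}=f^2|_{I_{\al(1)}}$, so the new image length $|R(f)(I^1_{\al(1)})|=|f(f(I_{\al(1)}))|$ is the $f$-image of a subinterval of $I_{\al(0)}$ and depends on where the mass of $Df|_{I_{\al(0)}}$ sits, not just on $|I_{\al(0)}|$ and $|f(I_{\al(0)})|$. (For type~$1$ the new domain length $|I^1_{\al(0)}|=|f^{-1}(I_{\al(0)})\cap I_{\al(1)}|$ has the same defect.) Hence the affine map $f_A$ you define---sharing the initial length tuples of $f$---will in general have different length tuples after one step, then possibly a different type at the next step, and there is no reason it has the same combinatorics as $f$ at all, let alone identical length tuples at every level. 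Both length terms in $d_{C^2}$ are therefore nonzero and uncontrolled, and part~(i) is unproven.

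The paper's route is essentially a shadowing argument and is much more indirect. It introduces the mean log-slope vectors $L^n=(L^n_\alpha)_\alpha$ of $R^nf$ and shows, using the $O(\lambda^{\sqrt n})$ affine convergence from~\cite{cunhasmania}, that $L^{n+1}=\Theta_n L^n+O(\lambda^{\sqrt n})$, i.e.\ $(L^n)_n$ is an asymptotic pseudo-orbit for the Rauzy--Veech cocycle. The cocycle restricted to $\im\Omega_\pi$ is uniformly hyperbolic (Proposition~\ref{hyp}) and admits an invariant complementary central direction on which it is a quasi-isometry; projecting $L^n$ onto the stable/unstable/central pieces and running a standard shadowing estimate produces a genuine cocycle orbit $\omega^n=\Theta_{0,n-1}\omega$ with $\|\omega^n-L^n\|=O(\lambda^{\sqrt n})$. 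The vector $\omega$ is the slope vector of $f_A$, but its partition vector $\tilde\zeta^0$ is \emph{not} that of $f$: it is found by a separate projective-contraction argument (Lemma~\ref{vp}), and only then does one verify that $f_A$ is well defined, has the right combinatorics, and that all three pieces of $d_{C^2}(R^nf,R^nf_A)$ are $O(\lambda^{\sqrt n})$. The $\sqrt n$ rate is inherited from the input estimate of~\cite{cunhasmania}, not manufactured by the balancing heuristic you sketch.
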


\begin{maintheorem}[Universality] If $f$ and $g$ satisfies the assumptions of Theorem \ref{teo1},  they have  the same combinatorics and they are break-equivalents then we can choose $f_A=g_A.$
\label{teo2}
\end{maintheorem}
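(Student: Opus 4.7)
The plan is to apply Theorem~\ref{teo1} separately to $f$ and to $g$, obtaining affine i.e.t.s $f_A$ and $g_A$ with the common combinatorics of $f$ and $g$, and then to show that the $2(d-1)$ parameters describing them (interval lengths $(|\tilde I_\al|)_\al$ and slopes $(s_\al)_\al$, subject to the usual normalisations) coincide. I would split the argument into a slope identification and a length identification.

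For the slopes, the key observation is that the break of $R^n f$ at each endpoint of its Rauzy--Veech partition is a finite product of one-sided derivative ratios of $f$ along the orbit segment realising the first return, and the combinatorial pattern of this product is entirely determined by $(\pi^j,\ve^j)_{j\le n}$. Because $f$ and $g$ share combinatorics and are break-equivalent, every factor in this product for $R^n f$ equals the corresponding factor for $R^n g$, so $R^n f$ and $R^n g$ have matching break values at corresponding endpoints for every~$n$. The estimate $d_{C^2}(R^n f, R^n f_A)=\O(\la^{\sqrt n})$ and its analogue for $g$ then force the ratios of adjacent slopes in $R^n f_A$ and $R^n g_A$ to coincide, which pins the slope vector of $f_A$ equal to that of $g_A$ modulo a common multiplicative constant; the zero mean nonlinearity hypothesis of Theorem~\ref{teo1} fixes this remaining scalar.

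For the lengths, the plan is to compare $(|\tilde I_\al(R^n f_A)|)_\al$ with $(|\tilde I_\al(R^n g_A)|)_\al$ via the same $C^2$-convergence transferred from $R^n f$ and $R^n g$, and then to conclude via the algebraic injectivity of the Rauzy--Veech action on the finite-dimensional affine parameter space: once the two normalised length vectors of $R^n f_A$ and $R^n g_A$ are shown to be asymptotically equal, equality propagates back through finitely many inverse Rauzy--Veech steps to the initial affine data. This is where I expect the main difficulty to lie. Break-equivalence is a purely local (pointwise derivative) datum, with no direct control over global interval lengths, so to match lengths one must leverage the bounded-combinatorics hypothesis and the renormalisation contraction estimates already developed in~\cite{cunhasmania} to argue that, once the slope vector is identified, the length direction in the affine parameter space is also locked down. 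Carrying this out carefully should yield $f_A=g_A$ and close the proof.
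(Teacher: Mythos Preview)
Your plan has a real gap in the slope step and an unnecessary complication in the length step.

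On the slopes: you correctly observe that break-equivalence forces the break values of $R^nf$ and $R^ng$ to agree at all corresponding partition endpoints, and $d_{C^2}$-closeness transfers this (asymptotically) to $R^nf_A$ and $R^ng_A$. But ``zero mean nonlinearity fixes the remaining scalar'' is not a valid argument, and it is not what the paper does. The actual mechanism is that, by construction (Lemma~\ref{log.deriv}), the log-slope vectors $\omega^f,\omega^g$ both lie in the central subspace $E^c_{0,\infty}$ of the Rauzy--Veech cocycle, of dimension $d-2$. Moreover Lemma~\ref{afim.break} only transfers $d-2$ of the breaks from $f$ to $f_A$ (the one at the discontinuity is excluded because the first-return times on the two sides differ, so the telescoping cancellation fails); consequently $\omega^f-\omega^g$ is a priori only a \emph{two-block} vector $(v,\dots,v,\tilde v,\dots,\tilde v)$, not a single constant. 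The paper then observes that such a vector, being in $E^c_{0,\infty}$, lives in the central space of an induced $2$-interval combinatorics, and that space has dimension $0$; hence $v=\tilde v=0$. Without invoking $E^c_{0,\infty}$ you cannot close this: an additive shift of $\omega$ still yields a legitimate affine g.i.e.m., and the hypothesis $\int D^2f/Df=0$ on $f$ says nothing about which shift is selected for $f_A$.

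On the lengths: once $\omega^f=\omega^g$, there is nothing left to do. Proposition~\ref{affine} shows that for a given $k$-bounded combinatorics and a given slope vector there is a \emph{unique} affine g.i.e.m.\ on $[0,1]$ realising them, the partition vector being the unique common point $\bigcap_n \T_0^n\Delta_\A$ of the projective contractions $\tilde T_n^{\nor}$. So the ``main difficulty'' you anticipate in matching lengths simply does not arise; the slope identification already finishes the proof.
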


The next result is a consequence of Theorem \ref{teo1} and Theorem \ref{teo2}.

\begin{maintheorem}\label{teo66}
Let $f,g\in\mathcal{B}^{2+\nu}_k$ be  such that
\begin{itemize}
\item[i.] $f$ and $g$ have the same combinatorics;
\item[ii.] $f$ and $g$ are break-equivalents;
\item[iii.] We have $$\int_0^1\frac{D^2f(s)}{Df(s)}ds = \int_0^1\frac{D^2g(s)}{Dg(s)}ds=0.$$
\end{itemize}
Then there exists $0<\la<1$ such that
$$d_{C^2}(R^nf,R^ng)=\O(\lambda^{\sqrt{n}}).$$
\label{teo3}
\end{maintheorem}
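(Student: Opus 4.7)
The plan is to derive Theorem \ref{teo3} as a direct corollary of Theorems \ref{teo1} and \ref{teo2}, combined with the triangle inequality for the distance $d_{C^2}$.

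First, I would apply Theorem \ref{teo1} to $f$. Since $f \in \mathcal{B}^{2+\nu}_k$ and satisfies the zero mean nonlinearity condition, we obtain an affine i.e.t. $f_A$ with the same combinatorics as $f$ and a constant $0 < \lambda_1 < 1$ such that
\[ d_{C^2}(R^n f, R^n f_A) = \O(\lambda_1^{\sqrt{n}}). \]
Applying Theorem \ref{teo1} again, now to $g$, yields an affine i.e.t. $g_A$ with the same combinatorics as $g$ and a constant $0 < \lambda_2 < 1$ such that
\[ d_{C^2}(R^n g, R^n g_A) = \O(\lambda_2^{\sqrt{n}}). \]

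Next I would invoke Theorem \ref{teo2}. The hypotheses (i) and (ii) of Theorem \ref{teo3} say that $f$ and $g$ have the same combinatorics and are break-equivalents, and each of them individually satisfies the assumptions of Theorem \ref{teo1} by hypothesis (iii). Hence Theorem \ref{teo2} applies and we may choose the two affine i.e.t.'s produced above so that $f_A = g_A$. In particular, $R^n f_A = R^n g_A$ for every $n \geq 0$.

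Finally, set $\lambda = \max\{\lambda_1, \lambda_2\} < 1$. The triangle inequality for $d_{C^2}$, together with the equality $R^n f_A = R^n g_A$, gives
\[ d_{C^2}(R^n f, R^n g) \leq d_{C^2}(R^n f, R^n f_A) + d_{C^2}(R^n g_A, R^n g) = \O(\lambda_1^{\sqrt{n}}) + \O(\lambda_2^{\sqrt{n}}) = \O(\lambda^{\sqrt{n}}), \]
which is exactly the claimed bound.

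Since the two preceding theorems are being used as black boxes, there is no substantive obstacle in this proof; the only mild subtlety is to notice that the triangle inequality is legitimate for $d_{C^2}$ defined by \eqref{distancia}, which follows because each of its three summands (the $C^2$-distance between the zooms on matching subintervals, and the two $\ell^1$ differences between the partition length vectors) is itself a distance, hence satisfies the triangle inequality.
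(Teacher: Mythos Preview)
Your proof is correct and follows essentially the same route as the paper's own argument: apply Theorem~\ref{teo1} to each of $f$ and $g$, use Theorem~\ref{teo2} to identify $f_A=g_A$, and finish with the triangle inequality for $d_{C^2}$. The paper's version is slightly terser (it does not explicitly introduce two constants $\lambda_1,\lambda_2$ and then take the maximum), but the content is the same.
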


It is known that if $f$ and $g$ has the same $k-$bounded combinatorics then they are semi-conjugate and this semi-conjugation sends break-point in break-point \cite{yoccoz}. If $f$ and $g$ have genus one then this semi-conjugation is indeed a conjugation (non wandering intervals).

\begin{maintheorem}[Rigidity] \label{teo4} Suppose that  $f$ and $g$ satisfy the assumptions of Theorem 3. Then $f$ and $g$ are $C^1$-conjugated.
\end{maintheorem}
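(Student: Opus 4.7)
The strategy I propose is to promote the $C^2$ renormalization convergence provided by Theorem \ref{teo3} to $C^1$ smoothness of the topological conjugacy between $f$ and $g.$ The remark preceding Theorem \ref{teo4} guarantees the existence of a topological conjugacy $h:\mathbb{S}^1\to\mathbb{S}^1$ with $h\circ f=g\circ h;$ because $f$ and $g$ share the same combinatorics and are break-equivalent, $h$ maps the dynamical partition $\mathcal{P}_n$ of $f$---whose atoms are the iterates $f^j(I_\al^n),$ $\al\in\A,$ $0\le j<r_\al^n$ (with $r_\al^n$ the return time of $I_\al^n$ to $I^n$)---atom-by-atom to the corresponding partition $\tilde{\mathcal{P}}_n$ of $g.$ A Denjoy-type distortion argument, using the $C^{2+\nu}$ regularity of $f$ on each continuity piece together with the $k$-bounded combinatorics, furnishes uniform control of $Df^j$ on every atom of $\mathcal{P}_n,$ and in particular bounded geometry.

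I would then follow the classical recipe and introduce the piecewise constant candidate derivative $\phi_n:\mathbb{S}^1\to\mathbb{R}_{>0}$ defined by $\phi_n|_J:=|h(J)|/|J|$ for $J\in\mathcal{P}_n.$ Three facts would suffice: (a) $\phi_n\to\phi$ uniformly, (b) $\phi$ is continuous, and (c) $h$ is differentiable with $Dh=\phi.$ Item (c) is routine given (a), (b) and bounded geometry: for $y$ close to $x,$ the quotient $(h(y)-h(x))/(y-x)$ is a convex combination of values $\phi_n(J')$ over atoms $J'\in\mathcal{P}_n$ contained in $[x,y],$ and as $y\to x$ and $n\to\infty$ this combination converges to $\phi(x).$ Item (b) follows from (a) together with bounded geometry, since $\phi_n$ is constant on each atom and neighbouring atoms of $\mathcal{P}_n$ have comparable sizes.

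The heart of the argument is item (a). Since $\mathcal{P}_{n+1}$ refines $\mathcal{P}_n,$ the value $\phi_n|_J$ is a convex combination of the values $\phi_{n+1}|_{J'}$ over the sub-atoms $J'\subset J,$ so $\|\phi_{n+1}-\phi_n\|_\infty$ is dominated by the oscillation of $\phi_{n+1}$ across the atoms of $\mathcal{P}_n.$ To estimate this oscillation I would pull each such $J$ back to an atom $I_\al^n$ by an appropriate branch of $f^{-j},$ and then, using the chain rule, bounded distortion, and break-equivalence to match the breaks along the orbit, express each quotient $|h(J')|/|J'|$ in terms of the zooms $Z_{I_\beta^n}R^nf$ and $Z_{\tilde I_\beta^n}R^ng$ evaluated at comparable points. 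Theorem \ref{teo3} then makes these zoomed quantities differ in $C^2$ by $\O(\la^{\sqrt n}),$ so the oscillation of $\phi_{n+1}$ on any $J\in\mathcal{P}_n$ is of the same order. Since $\sum_n\la^{\sqrt n}<\infty,$ the sequence $(\phi_n)$ is Cauchy in $C^0,$ which proves (a). Combining (a), (b), (c) yields $h\in C^1,$ as desired.

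The main obstacle is precisely the translation just sketched: converting $C^2$-closeness of the zoomed renormalizations $R^nf$ and $R^ng$ into quantitative closeness of the atom-size ratios at two consecutive levels requires careful bookkeeping of the distortion accumulated along the orbit $\{f^j(I_\al^n)\}_{j=0}^{r_\al^n-1}$ and, crucially, of the derivative jumps of $f^j$ across break points. The break-equivalence hypothesis is essential here, as it ensures that at each level the one-sided derivative jumps of $f$ and $g$ cancel in the ratios under consideration, leaving only a smooth correction whose $C^0$ size is controlled by $d_{C^2}(R^nf,R^ng).$
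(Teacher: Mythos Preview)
Your plan---constructing $Dh$ as the uniform limit of the step functions $\phi_n(J)=|h(J)|/|J|$---is a classical rigidity strategy and is close in spirit to the paper's route. The paper organises things through the cohomological equation $\ln Dg\circ h-\ln Df=\psi\circ f-\psi$: it builds $\psi$ as a uniform limit of Birkhoff-type sums (Lemma~\ref{unifor}), proves $\psi$ is continuous (Lemma~\ref{sol.coh}), shows separately that $|R^ng(h(I^n_\alpha))|/|R^nf(I^n_\alpha)|\to C_8$ (Lemma~\ref{const}), and then verifies $Dh=C_8e^{\psi}$ (Proposition~\ref{c_1}). Your $\phi_n$ is essentially the product of these three ingredients, so the two routes carry the same content.

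There is, however, a genuine gap: item (b) does \emph{not} follow from (a) together with bounded geometry. Uniform convergence of step functions on partitions of shrinking mesh does not force continuity of the limit; what is missing is control of the jump of $\phi_n$ \emph{across} the common boundary of two adjacent atoms, and bounded geometry only says adjacent atoms have comparable length, not that $\phi_n$ takes comparable values on them. Those boundary points are exactly the forward orbit of the break set, and this is where break-equivalence must enter: if $f^{k_0}(x)=\partial I_\beta$, the one-sided limits of $\log\phi$ at $x$ differ by $\ln BP_f(\partial I_\beta)-\ln BP_g(h(\partial I_\beta))$, which vanishes precisely under the break-equivalence hypothesis (this is Case~iv of the paper's Lemma~\ref{sol.coh}). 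By contrast, your invocation of break-equivalence inside item (a) is misplaced: within a single atom $J\in\mathcal{P}_n$ the iterate $f^{i_n}$ is a genuine diffeomorphism and no break is crossed, so only bounded distortion and Theorem~\ref{teo3} are needed there. Move the use of break-equivalence from (a) to (b) and supply the jump computation at the orbit of the break points, and your outline becomes complete.
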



\begin{maintheorem}[Linearization]\label{teo5}
If  $f$  satisfies the assumptions of Theorem \ref{teo1} then  $f$ is $C^1$-conjugate with a unique piecewise affine  homeomorphism on the circle.

\end{maintheorem}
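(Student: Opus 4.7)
The plan is to derive Theorem \ref{teo5} as a direct corollary of Theorems \ref{teo1} and \ref{teo4}. For existence, I first apply Theorem \ref{teo1} to $f$ to obtain an affine i.e.t.\ $f_A$ with the same combinatorics as $f$ and $d_{C^2}(R^n f, R^n f_A) = \O(\la^{\sqrt{n}})$. Since $f$ comes from a circle homeomorphism it has genus one, and the shared combinatorics forces $f_A$ also to have genus one; hence $f_A$ may be realized as a piecewise affine homeomorphism $\phi$ of $\mathbb{S}^1$. I then want to apply Theorem \ref{teo4} to the pair $(f,\phi)$. The required hypotheses are: $\phi\in\mathcal{B}_k^{2+\nu}$ (immediate because $\phi$ is piecewise $C^\infty$ with the same $k$-bounded combinatorics and genus one as $f$, and has no connections since it is infinitely renormalizable with the same itinerary as $f$); the same combinatorics as $f$ (from Theorem \ref{teo1}); zero mean nonlinearity, automatic for $\phi$ since $D^2\phi/D\phi\equiv 0$; and break-equivalence between $f$ and $\phi$. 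Only the last item is substantial and it is discussed below.

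For uniqueness, suppose $\psi$ is another piecewise affine circle homeomorphism $C^1$-conjugate to $f$. Then $\phi$ and $\psi$ are themselves $C^1$-conjugate, so they share combinatorics, are break-equivalent, and (being piecewise affine) both have zero mean nonlinearity; Theorem \ref{teo3} therefore gives $d_{C^2}(R^n \phi, R^n \psi) = \O(\la^{\sqrt{n}})$. But the renormalization orbits of $\phi$ and $\psi$ remain inside the finite-dimensional space of affine i.e.t.s with the prescribed combinatorics, on which the Rauzy--Veech cocycle acts hyperbolically. Two orbits whose distance decays subexponentially cannot originate from distinct points, so one concludes $\phi = \psi$ (modulo the trivial freedom of choosing a base point on $\mathbb{S}^1$, i.e. conjugation by a rotation, which is itself piecewise affine and $C^1$).

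The principal technical obstacle is the verification of break-equivalence of $f$ and $\phi$ in the existence part. Theorem \ref{teo1} supplies the $C^2$-convergence $R^n f \to R^n f_A$, but one must enter its proof to extract that the one-sided derivatives of $\phi$ at each break point reproduce those of $f$ at the corresponding point, and that $\phi$ introduces no spurious new break points. Intuitively, break ratios $Df(x_-)/Df(x_+)$ are invariants that survive the renormalization procedure and are transferred to the affine limit produced by that theorem; however writing this down carefully requires tracking how breaks propagate along the Rauzy--Veech sequence. Once this is secured, Theorems \ref{teo3} and \ref{teo4} apply mechanically and the rest of the proof is structural.
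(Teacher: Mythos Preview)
Your outline has a genuine gap, and it is precisely the point you flag as ``the principal technical obstacle.'' The affine i.e.t.\ $f_A$ produced by Theorem~\ref{teo1} is \emph{not} in general break-equivalent to $f$. Tracking breaks through the Rauzy--Veech renormalization, as you suggest, is exactly what Lemma~\ref{afim.break} does, and that lemma establishes only that $d-2$ of the breaks coincide: those at $\partial I_\alpha$ for $\alpha\neq\alpha^*$ and $\pi_0(\alpha)>1$. The remaining breaks (at the discontinuity $\partial I_{\alpha^*}$ and at $0$ on the circle) are \emph{not} recovered by this argument, and in fact for the particular $f_A$ with slope vector $\omega\in E^c_{0,\infty}$ there is no reason they should match. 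The paper's proof supplies the missing idea: one exploits the one-dimensional stable direction $E^s_0$ to build a one-parameter family of weak affine models $g_t$ with slope vectors $\omega+tv$, $v\in E^s_0\setminus\{0\}$, observes (via Lemma~\ref{break.dif}) that the break at $0$ varies non-trivially along this family, and then \emph{selects} the unique $t_0$ making that break agree with the corresponding break of $f$. The product-of-breaks constraint (a consequence of zero mean nonlinearity) then forces the last remaining break to match as well, giving full break-equivalence and allowing Theorem~\ref{teo4} to be applied.

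Your uniqueness argument suffers from the same oversight. The Rauzy--Veech cocycle restricted to $\operatorname{Im}\Omega_\pi$ is hyperbolic, but the stable direction $E^s_0$ is one-dimensional there: two affine models $\phi,\psi$ whose slope vectors differ by a nonzero element of $E^s_0$ will nonetheless satisfy $\|\Theta_{0,n}(\omega^\phi-\omega^\psi)\|\to 0$, and hence $d_{C^2}(R^n\phi,R^n\psi)\to 0$. So ``two orbits whose distance decays cannot originate from distinct points'' is simply false here. The paper instead argues that any piecewise affine circle homeomorphism $C^1$-conjugate to $f$ must be a weak affine model of $f$ (forcing the central component of its slope vector) with the same breaks as $f$, and then invokes Lemma~\ref{break.dif} (which uses \cite[Proposition~2.3]{mmy}) to conclude that two weak affine models with identical breaks are equal. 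Both the existence and uniqueness gaps in your proposal are thus manifestations of the same missing ingredient: control of the one-dimensional $E^s_0$ freedom among weak affine models.
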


There are previous results  on rigidity for piecewise smooth diffeomorphism of the circle with only one break point  and also  satisfying certain combinatorial restrictions \cite{kk} \cite{kt}.  There are also recent results \cite{yoccoz} on the structure of the set of "simple", small  deformations of a standard i.e.m.  $T_0$ (with certain Roth type combinatorics) which are $C^r$ conjugated with $T_0$,  but the nature  of their results and methods are quite distinct from ours.

\begin{rem} As notice by the anonymous referee, the estimates for the rate of convergence  of the renormalization operator that appears above are  not optimal. We would expect, as commented in \cite{sinai} in the case of diffeomorphisms on the circle, that  in fact exponential convergence holds true.\end{rem}

\section{Rauzy-Veech Cocycle}
In this section we use the notation  of  \cite{viana}.

Let $\pi$ be a combinatorial data of a g.i.e.m. and let  $\lambda=(\lambda_\alpha)_{\alpha \in \mathcal{A}}$ be a vector in $\mathbb{R}^{\mathcal{A}}$. Define $\omega=(\omega_{\al})_{\al\in\A}$  as
\begin{equation}\label{tau} \omega_{\al}=\sum_{\pi_1(\beta)<\pi_1(\alpha)} \lambda_{\beta}-\sum_{\pi_0(\beta)<\pi_0(\alpha)}\lambda_{\beta}.\end{equation}
 Notice that $\Omega_{\pi}(\la)=\omega$, where   $\Omega_{\pi}$ is the anti-symmetric matrix given by
$$
\Omega_{\al,\beta}=\left\{
\begin{array}{ll}
+1& \mbox{~se~} \pi_1(\al)>\pi_1(\beta) \mbox{~and~} \pi_0(\al)<\pi_0(\beta)\\
-1& \mbox{~se~} \pi_1(\al)<\pi_1(\beta) \mbox{~and~} \pi_0(\al)>\pi_0(\beta)\\
0& otherwise.
\end{array}
\right.
$$
If $\pi$ has genus one then $\dim Ker \ \Omega_\pi=  d-2$, so $\dim Im \ \Omega_\pi =2$. Denote by $\Pi^1$ the set of all possible genus one irreducible combinatorial data $\pi=(\pi_0,\pi_1)$. The Rauzy-Veech cocycle are the functions
$$\Theta_\ve\colon \Pi^1 \times \mathbb{R}^\mathcal{A} \rightarrow  \Pi^1 \times \mathbb{R}^\mathcal{A},$$
with $\ve \in \{0,1\}$, defined by  $\Theta_\ve(\pi,v)=(r_\ve(\pi),\Theta_{\pi,\epsilon}v)$. Here
\begin{eqnarray}
\label{B}
\Theta_{\pi,\ve}=\mathbb{I}+ E_{\al(1-\ve)\al(\ve)},
\end{eqnarray}
where $E_{\al\beta}$ is the elementary matrix whose only nonzero coefficient is $1$ in position $(\al,\beta)$ and $r_\ve(\pi)$ is the combinatorial data of $R(f).$

 We know that if $\pi'= r_\ve(\pi)$ then

\begin{equation}\label{semiconj}         \Theta_{\pi,\ve} \Omega_{\pi} = \Omega_{\pi'} (\Theta_{\pi,\ve}^t)^{-1}\end{equation}

Let $g:[0,1)\to[0,1)$ be an affine i.e.m. without connexions. Then $g$ is uniquely determined by the triple $(\pi,\la,\omega^0),$ where $\pi$ is the combinatorial data, $\la=(\la_\al)_{\al\in\A}\in\mathbb{R}^d_+$ is the partition vector of the domain and $\omega^0=(\omega_{\al}^0)_{\al\in\A}\in\mathbb{R}^d$ is such that
$$g(x)=e^{\omega_\al^0}x+\delta_{\al}\mbox{~ for all~}x\in I_{\al}.$$

For each $n$ denote by $\omega^n=(\omega^n_{\al})_{\al}$ the vector such that $R^n(g)(x)=e^{\omega^n_{\al}}x+\delta^n_{\al}$ for all $x\in I^n_{\al}.$ By Rauzy-Veech algorithm we know that

\begin{eqnarray}
\label{der1}\omega^{n+1}_{\al}&=&\omega^n_{\al} \mbox{~if~} \al\not=\al^n(1-\varepsilon)\\
\label{der2}\omega^{n+1}_{\al^{n}(1-\varepsilon)}&=&\omega^n_{\al^n(\varepsilon)}+\omega^n_{\al^n(1-\varepsilon)}, \text{ otherwise,}
\end{eqnarray}
where $\al^{n}(\varepsilon)$ and $\al^{n}(1-\varepsilon)$ are the winner and loser of $R^n(g)$ respectively. Therefore

$$\Theta_n(\omega^n)=\Theta_{\pi^n,\varepsilon^n}(\omega^n)=\omega^{n+1}.$$
Repeating this process inductively we have

$$\Theta_n\Theta_{n-1}\cdots \Theta_1\Theta_0(\omega^0)=\omega^n.$$

To prove  Theorem \ref{teo1} we need to understand the hyperbolic properties of the Rauzy-Veech cocycle restricted to the $k$-bounded combinatorics.

\subsection{Invariant cones}  Since $\Theta_{\pi,\ve}$ , $\Theta_{\pi,\ve}^t$  are non negative matrices, it preserves the positive cone $\mathbb{R}_+^\mathcal{A}$.  It  follows from  \eqref{semiconj} that
$$ \Theta_{\pi,\ve} \im \ \Omega_{\pi} = \im \ \Omega_{\pi'}.$$

We need to find  cones inside  $\im \ \Omega_{\pi}$ which are invariant by the action of  $\Theta_{\pi, \ve}$ and  $\Theta_{\pi, \ve}^{-1}$.
Define  the two dimensional cone $$C^s_\pi := \Omega_{\pi} \mathbb{R}^\mathcal{A}_+\subset \im \ \Omega_{\pi}.$$ It follows from \eqref{semiconj} that
 $$     \Theta_{\pi,\ve}^{-1} C^s_{\pi'} \subset C^s_{\pi}.$$
%
%
%
%
%

 For each $\pi \in \Pi^1$ define the convex cone
 $$T^+_\pi=\{ (\tau_\alpha)_{\alpha \in \mathcal{A}} \colon \ \sum_{\pi_0(\alpha)\leq k}  \tau_\alpha > 0 \ \ and  \  \sum_{\pi_1(\alpha) \leq k} \tau_\alpha  <  0,  \text{ for every } 1\leq k\leq d-1  \}$$

We have  \cite[Lemma 2.13]{viana} that $$(\Theta^t_{\pi,\ve})^{-1} T^+_\pi \subset  T^+_{\pi'}$$ Define
$$C^u_\pi= - \Omega_\pi T^+_\pi \subset  Im \ \Omega_\pi $$
By definition
$$C^u_\pi\subset  \ \im \ \Omega_\pi \cap  \mathbb{R}_+^\mathcal{A},$$
and it is easy to show that
\begin{equation}\label{empker} C^u_\pi\cap \ker \ \Omega_\pi =\{  0\}.\end{equation}
Note that   $ \Theta_{\pi,\ve}  C^u_\pi \subset C^u_{\pi'}$. Indeed applying $T^+_\pi$ in  \eqref{semiconj}  we have that

     $$ \Theta_{\pi,\ve} (-\Omega_{\pi} T^+_\pi)= -\Omega_{\pi'} (\Theta_{\pi,\ve}^t)^{-1}T^+_\pi\subset -\Omega_{\pi'} T^+_{\pi'}.$$

\begin{pro}[Uniform hyperbolicity] For each $k$ there exists $\mu=\mu(k) > 1$ and $C_1, C_2> 0$ with the following property: Let $(\pi^n,\ve^n)$ be  a sequence de combinatorics k-bounded with $r_{\ve^n}(\pi^n)=\pi^{n+1}.$ Then
\label{hyp}
\begin{itemize}
\item[(a)] For every $n$ and $v \in C^u_{\pi^0}$ we have
    $$  \|(\Theta_{\pi^n,\ve^n} \cdots \Theta_{\pi^1,\ve^1} \Theta_{\pi^0,\ve^0}) v\| \geq C_1 \mu^n \|v\|.$$
\item[(b)] For every $n$ and $v \in C^s_{\pi^n}$ we have
    $$  \|(\Theta_{\pi^{n-1},\ve^{n-1}} \cdots \Theta_{\pi^1,\ve^1} \Theta_{\pi^0,\ve^0})^{-1} v\| \geq C_2 \mu^n \|v\|.$$
\end{itemize}
\end{pro}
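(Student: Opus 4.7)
The plan is to exploit the genus one assumption to reduce everything to a two-dimensional linear cocycle and then run a standard Birkhoff-Hilbert cone contraction argument, using the $k$-bounded combinatorics as the source of uniform strict cone inclusion.

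\medskip

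\noindent\textbf{Step 1: reduction to the two-dimensional invariant subspace.}
Since $\pi\in\Pi^1$ has genus one, $\dim \im\,\Omega_\pi = 2$, and the relation \eqref{semiconj} shows that $\Theta_{\pi,\ve}$ sends $\im\,\Omega_\pi$ onto $\im\,\Omega_{\pi'}$. So the cocycle restricts to a product of $2\times 2$ linear maps between these planes, and the cones $C^u_\pi,\,C^s_\pi$ live inside them. Via the surjections $-\Omega_\pi\colon T^+_\pi\to C^u_\pi$ and $\Omega_\pi\colon\mathbb{R}^\A_+\to C^s_\pi$, the forward action of $\Theta_{\pi,\ve}$ on $C^u$ is conjugate (modulo $\ker\Omega$) to the action of $(\Theta_{\pi,\ve}^t)^{-1}$ on $T^+$, and the backward action of $\Theta_{\pi,\ve}^{-1}$ on $C^s$ is conjugate to that of $\Theta_{\pi,\ve}^t$ on positive vectors. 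So I only need to produce Perron-type strict contraction of cones in two copies of $\mathbb{R}^\A$.

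\medskip

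\noindent\textbf{Step 2: strict cone inclusion from $k$-boundedness.}
I will show that there exists $N=N(k)$ and a compact subcone $K^u\subset C^u_\pi$ (uniformly defined in projective terms) such that for every $n$,
\[
\Theta_{\pi^{n+N-1},\ve^{n+N-1}}\cdots\Theta_{\pi^n,\ve^n}\;\bigl(C^u_{\pi^n}\bigr)\;\subset\;K^u\subset\mathrm{int}\,C^u_{\pi^{n+N}},
\]
and similarly $(\Theta^t)^{-1}$ sends $T^+_{\pi^n}$ into a uniformly interior subcone of $T^+_{\pi^{n+N}}$. The combinatorial definition of $k$-bounded combinatorics is tailored exactly to this: for any ordered pair $(\beta,\gamma)\in\A\times\A$ there is, within a window of length $O(k)$, a chain of winners/losers starting at $\beta$ and ending at $\gamma$. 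Tracking this through the definition \eqref{B} of $\Theta_{\pi,\ve}=\mathbb{I}+E_{\al(1-\ve)\al(\ve)}$, one sees that the product over such a window contains, for every $(\beta,\gamma)$, a strictly positive entry bounded below by a constant depending only on $k$. In the projective picture on $\im\,\Omega_\pi$ this forces compact inclusion in the interior; the analogous computation for $(\Theta^t)^{-1}$ and $T^+$ uses the explicit description of $T^+_\pi$ by partial-sum inequalities and is carried out in \cite{viana}.

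\medskip

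\noindent\textbf{Step 3: projective contraction $\Rightarrow$ uniform expansion.}
Once strict cone inclusion in the two-dimensional invariant plane is established, Birkhoff's theorem gives a uniform contraction $\rho<1$ of the Hilbert projective metric on $C^u$ (resp.\ on $C^s$ under the inverse cocycle) per block of length $N$. Since $\det\Theta_{\pi,\ve}=1$, the restriction of the cocycle to the two-dimensional subspace $\im\,\Omega_\pi$ has determinant of modulus one (up to the identification given by a uniformly bounded change of basis between the $\im\,\Omega_{\pi^n}$, using $\pi^n$ ranges in a finite set). Projective contraction combined with unit determinant forces one direction to expand at rate $\rho^{-1/N}$ per step; writing $\mu=\rho^{-1/N}>1$ gives item (a). For item (b) one applies the same argument to the inverse cocycle acting on $C^s$, using $\Theta^{-1}_{\pi,\ve}C^s_{\pi'}\subset C^s_\pi$ and the analogous compact inclusion in reverse time. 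The normalization (empker), $C^u_\pi\cap\ker\Omega_\pi=\{0\}$, ensures that norms on $C^u$ are uniformly equivalent to norms on the $T^+$-side, so the expansion transfers cleanly.

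\medskip

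The main obstacle is Step 2, the combinatorial verification that $k$-bounded combinatorics produces, in a window of length $N(k)$, a product matrix whose image of the cone is uniformly interior. Everything else is a standard cone-hyperbolicity package; the substance of the proposition is that the $k$-bounded condition gives the Perron positivity in a quantitative, uniform way.
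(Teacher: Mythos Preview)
Your approach is genuinely different from the paper's, and while it can be made to work, Step~3 contains a real gap in its current form.

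\medskip

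\textbf{What the paper does.} The paper never restricts to the two-dimensional plane and never invokes Birkhoff--Hilbert contraction for part~(a). Instead it uses the far more elementary observation that $C^u_\pi\subset\mathbb{R}_+^{\mathcal A}$ together with the fact (from \cite{coho}, \cite{yoc}) that over a window of length $k(3d-4)$ the product $\Theta_{n,n+k(3d-4)}$ is a \emph{positive integer} matrix. For any such $M$ and any $v\in\mathbb{R}_+^{\mathcal A}$ one has $(Mv)_i\geq\sum_j v_j$, hence $\|Mv\|_1\geq d\,\|v\|_1$; iterating gives~(a) immediately. For~(b) the paper does not run the inverse cocycle on $C^s$ directly (the inverse matrices have negative entries), but instead uses the conjugacy \eqref{semiconj} to rewrite $(\Theta_{0,n-1})^{-1}\Omega_{\pi^n}=\Omega_{\pi^0}\,{}^t\Theta_{0,n-1}$ and transfers the question to the \emph{transpose} cocycle acting on $\mathbb{R}_+^{\mathcal A}$, where the same positivity argument gives expansion; a compactness argument (using $\mathbb{R}_+^{\mathcal A}\cap\ker\Omega_\pi=\emptyset$) then bounds $\|\Omega_{\pi^0}\cdot\|$ below on the relevant compact subcone.

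\medskip

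\textbf{The gap in your Step 3.} Your sentence ``since $\det\Theta_{\pi,\ve}=1$, the restriction of the cocycle to $\im\,\Omega_\pi$ has determinant of modulus one (up to a uniformly bounded change of basis)'' is not a valid deduction. Determinant one on $\mathbb{R}^{\mathcal A}$ does \emph{not} force determinant one on an invariant $2$-plane: the complementary quotient action could have Jacobian $c\neq 1$, and then the restricted Jacobian is $c^{-1}$. ``Uniformly bounded change of basis'' does not help either, since bounded per-step Jacobians still yield exponentially growing products over $n$ steps, which would destroy your expansion estimate. What actually makes the restricted Jacobian equal to~$1$ is the \emph{symplectic} relation hidden in \eqref{semiconj}: rewriting it as $\Theta_{\pi,\ve}\,\Omega_\pi\,\Theta_{\pi,\ve}^t=\Omega_{\pi'}$ shows that $\Theta_{\pi,\ve}$ intertwines the nondegenerate antisymmetric forms induced by $\Omega_\pi,\Omega_{\pi'}$ on the $2$-planes $\im\Omega_\pi,\im\Omega_{\pi'}$, and in dimension two a symplectic form \emph{is} a volume form. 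With this correction your Birkhoff argument goes through (the passage from projective contraction plus unit determinant to uniform norm expansion for non-autonomous $2\times 2$ cocycles is standard but still deserves a line of justification, e.g.\ via the derivative formula for the projective action).

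\medskip

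\textbf{Comparison.} Your route is more conceptual and treats (a) and (b) symmetrically, at the cost of needing the symplectic observation above and the Birkhoff machinery. The paper's route is cruder but entirely elementary: it exploits integrality and positivity to get expansion in $\ell^1$ by the factor $d$ per block, with no determinant or cone-metric considerations at all.
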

\begin{proof} Note that for every $n$ the finite sequence $$\{(\pi^n,\ve^n), (\pi^{n+1},\ve^{n+1}), \dots, (\pi^{n+k},\ve^{n+k})\}$$ is {\it complete}, that is,  every letter $\alpha \in \mathcal{A}$ is the winner  at least once along this sequence.  It follows  from \cite[Section 1.2.4]{coho} and \cite[Section 10.3]{yoc} that
$$ \Theta_{n,n+k(3d-4)}=\Theta_{\pi^{n+k(3d-4)},\ve^{n+k(3d-4)}} \cdots \Theta_{\pi^{n+1},\ve^{n+1}} \Theta_{\pi^n,\ve^n}$$
is a positive matrix with integer entries  satisfying
\begin{eqnarray}
\label{tmais}
{}^t(\Theta_{n,n+k(3d-4)})^{-1}\overline{T^+_{\pi^n}}\subset T^+_{\pi^{n+k(3d-4)+1}}\cup\{0\}.
\end{eqnarray}
By  (\ref{tmais}) and  $\Theta_{\pi^j,\ve^j}\overline{\Om_{\pi^j}(T^+_{\pi^j})}=\Om_{\pi^{j+1}}{}^t\Theta_{\pi^j,\ve^j}^{-1}(\overline{T^+_{\pi^j}})$ for all $j\geq0$ we have
\begin{equation} \Theta_{n,n+k(3d-4)}\overline{C^u_{\pi^n}} \subset C^u_{\pi^{n+k(3d-4)+1}}\cup\{0\}\end{equation}
for every $n \in \mathbb{N}$. Since $C^u_{\pi^n}\subset \mathbb{R}^{\mathcal{A}}_+$, in particular we have
\begin{equation}\label{posi}  \|\Theta_{n,n+k(3d-4)} v\|_1\geq d \|v\|_1,\end{equation}
for every $v \in C^u_{\pi^n}$. Given $n \in \mathbb{N}$, let $n= qk(3d-4) + r$, with $q,r \in \mathbb{N}$, $0\leq r < k(3d-4)$. Then
$$\|\Theta_{0,n} v\|\geq d^{q} \|\Theta_{0,r} v\|_1\geq d^{(n-r)/(k(3d-4)} \min \{\|\Theta_{0,r}^{-1}\|^{-1}, \ r< k(3d-4) \}  \|v\|_1$$
$$= C_1 \mu^n \|v\|_1 $$
To show  (b),  note that by (a) we have that for every $n$
$${}^t\Theta_{n+k(3d-4),n}= {}^t( \Theta_{\pi^n,\ve^n} \Theta_{\pi^{n+1},\ve^{n+1}} \cdots  \Theta_{\pi^{n+k(3d-4)},\ve^{n+k(3d-4)}}).$$
has positive  integer entries. Using an argument similar to the proof of (a) we conclude that
$$ \|{}^t( \Theta_{\pi^n,\ve^n}\cdots \Theta_{\pi^0,\ve^0} ) w\| \geq C_1 \mu^n \|w\|$$
for every $w \in \mathbb{R}^\mathcal{A}_+$.  By \eqref{semiconj} we have
%
$$(\Theta_{\pi^{n-1},\ve^{n-1}}\cdots\Theta_{\pi^0,\ve^0})^{-1}\Om_{\pi^n}=\Om_{\pi^0}{}^t(\Theta_{\pi^{n-1},\ve^{n-1}}\cdots\Theta_{\pi^0,\ve^0}).$$
Given  $ v \in C^s_{\pi^0}$ there exists $w \in \mathbb{R}_+^{\mathcal{A}}$ such that $v=\Omega_{\pi^0}w$.
The fact that $\Theta_{i+k(3d-4),i}^t >0$ for every $i$ easily implies that there exist $\delta_1,\delta_2  \in (0,1)$ such that
$${}^t\Theta_{0,n} \mathbb{R}_+^\mathcal{A} \subset \Lambda_{\delta_1,\delta_2} =\{(\lambda_\alpha)_{\alpha \in \mathcal{A}} \in \mathbb{R}_+^\mathcal{A}, \ \delta_1\leq \frac{\lambda_\alpha }{\sum_\beta \lambda_\beta} \leq \delta_2   \text{ for every } \alpha \}.$$

Now note that
\begin{equation}\label{kerzero} \mathbb{R}_+^\mathcal{A} \cap Ker \ \Omega_{\pi^n}=\emptyset.\end{equation}

In fact, let $\lambda \in \mathbb{R}_+^\mathcal{A}$ be  such that  $\Omega_{\pi^n}\lambda=0.$ Then by definition of $\Om_{\pi^n}$ we have

$$\sum_{\pi_1^n(\beta)<\pi_1^n(\alpha)} \lambda_{\beta}-\sum_{\pi_0^n(\beta)<\pi_0^n(\alpha)}\lambda_{\beta}=0,\; \text{ for all }\al\in\A.$$

Let $\al_0\in\A$ such that $\pi_0^n(\al_0)=d.$ Then

\begin{eqnarray}
\label{soma}
0&=&\sum_{\pi_1^n(\beta)<\pi_1^n(\alpha_0)} \lambda_{\beta}-\sum_{\pi_0^n(\beta)<d}\lambda_{\beta}\nonumber\\
&=& \sum_{\pi_1^n(\beta)<\pi_1^n(\alpha_0)} \lambda_{\beta} - \sum_{\pi_1^n(\beta)<\pi_1^n(\alpha_0)} \lambda_{\beta}-\sum_{\pi_1^n(\beta)>\pi_1^n(\alpha_0)} \lambda_{\beta}\nonumber\\
& = & -\sum_{\pi_1^n(\beta)>\pi_1^n(\alpha_0)} \lambda_{\beta},
\end{eqnarray}
which is a contradiction because $\sharp\{\beta\in\A\; :\; \pi_1(\beta)>\pi_1(\alpha_0)\}\geq1,$ due to the fact that $\pi^n=(\pi_0^n,\pi_1^n)$ is irreducible.

By \ref{kerzero} we have that

$$C_3=\inf \{\frac{\|\Omega_{\pi}u\|_1}{\|u\|_1}, \ u\neq 0, u \in \Lambda_{\delta_1,\delta_2}, \ \pi \in \Pi^1 \ and   \ irreducible \} >0.$$

For all $v\in C^s_{\pi^n}$ there is $w\in\R^\A_+$ such that $v=\Om_{\pi^n}w.$ Therefore

\begin{eqnarray*}
\|(\Theta_{\pi^{n-1},\ve^{n-1}}\cdots\Theta_{\pi^0,\ve^0})^{-1}v\|&=&\|\Om_{\pi^0}{}^t(\Theta_{\pi^{n-1},\ve^{n-1}}\cdots\Theta_{\pi^0,\ve^0})w\|\\
&\geq& C_3\cdot \|{}^t(\Theta_{\pi^{n-1},\ve^{n-1}}\cdots\Theta_{\pi^0,\ve^0})w\|\\
&\geq & C_3\cdot C_1\cdot \mu^{n-1}\|w\|\\
&\geq & C_3\cdot C_1\cdot\frac{1}{C_4}\cdot\mu^{n-1}\|v\|,
\end{eqnarray*}
where
$$C_4=\sup_{\pi\in\Pi^1}\sup_{v\in\R^\A\setminus\{0\}}\left\{\frac{\|\Om_{\pi}v\|}{\|v\|}\right\}.$$

\end{proof}

Motivated by Proposition \ref{hyp} we define the stable direction in the point $\{\pi^j,\ve^j\}$ as
\begin{eqnarray}
\label{stable}
E^s_j:= E^s(\pi^j)=\bigcap_{n\geq0}\Theta_j^{-1}\cdots\Theta_{j+n}^{-1}(C^s_{\pi^{j+n+1}}).
\end{eqnarray}

By definition the subspaces $E^s_j$ are invariant by the Rauzy-Veech cocycle, i.e, for all $j\geq0$
$$\Theta_j(E^s_j)=E^s_{j+1}.$$

Now we define the unstable direction. Let $u_0\in C^u_{\pi^0}$ be such that $\|u_0\|=1.$ Then we define $E^u_0$ as the subspace spanned by $u_0,$ that we will be denoted by $<u_0>.$ For all $j>0$ we define
\begin{eqnarray}
\label{unstable}
E^u_j:=<\frac{u_j}{\|u_j\|}>, \text{ where } u_j=\Theta_{j-1}(u_{j-1}).
\end{eqnarray}
The subspaces $E^u_j$ are forward invariant  by the Rauzy-Veech cocycle.

The result of  this subsection shows that Rauzy-Veech cocycle is hyperbolic inside $\im\ \Om.$ In the next subsection we show that outside $\im\ \Om$ the Rauzy-Veech cocycle has a central direction and it is a quasi-isometry in this direction.

\subsection{Central direction: Periodic combinatorics}
First we study periodic combinatorics. Suppose that there is $p\in\N$ such that $\{\pi^n,\ve^n\}=\{\pi^{n+p},\ve^{n+p}\}$ for all $n\in\N$, i.e. the combinatorics has  period $p.$ So we know that $(\Theta^t_{0,p-1})^{-1}|_{\ker\ \Omega_{\pi^0}}=\mathrm{Id},$ see \cite[Lemma 2.11]{viana}.

\begin{lemma}  \label{Ec}  Define
$\Psi_p\colon  \ker \ \Omega_{\pi^0} \rightarrow \im \ \Omega_{\pi^0}$
 as
$$\Psi_p(k)=(\Theta_{0,p-1}-\mathrm{Id})^{-1}(k- \Theta_{0,p-1}(k)).$$ Then the subspace
$E^c_{0,p-1}:=\Big\{k+\Psi_p(k), \ k\in \ker \ \Omega_{\pi^0}\Big\}$ is  the central direction  of  $\Theta_{0,p-1}.$ Indeed  $\Theta_{0,p-1}v=v$ for every $v\in E^c_{0,p-1}$.\end{lemma}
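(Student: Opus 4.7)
The plan is to split the proof into two parts: first, establish that the map $\Psi_p$ is well defined; second, verify by direct computation that every vector of the form $k+\Psi_p(k)$ is fixed by $\Theta_{0,p-1}$. Throughout I will write $\Theta:=\Theta_{0,p-1}$, $N:=\ker\Omega_{\pi^0}$, $M:=\mathrm{Im}\,\Omega_{\pi^0}$; since $\Omega_{\pi^0}$ is antisymmetric one has $M=N^\perp$ and hence $\mathbb{R}^{\mathcal{A}}=N\oplus M$.

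For the well-definedness of $\Psi_p$, I need two facts: that $(\Theta-\mathrm{Id})|_M$ is invertible, and that $(\mathrm{Id}-\Theta)(N)\subset M$. The first is a direct consequence of Proposition \ref{hyp}: periodicity makes the combinatorics $k$-bounded for some $k$, so parts (a) and (b) of that proposition give uniform expansion on $C^u_{\pi^0}$ and uniform contraction on $C^s_{\pi^0}$, both of which live inside $M$. Consequently no eigenvalue of $\Theta|_M$ lies on the unit circle, so $(\Theta-\mathrm{Id})|_M$ is invertible. For the second fact I would invoke the hypothesis $(\Theta^t)^{-1}|_N=\mathrm{Id}$ coming from \cite[Lemma 2.11]{viana}, which is equivalent to $\Theta^t|_N=\mathrm{Id}$; then for every $w\in N$ and every $u\in\mathbb{R}^{\mathcal{A}}$,
$$\langle w,(\mathrm{Id}-\Theta)u\rangle=\langle(\mathrm{Id}-\Theta^t)w,u\rangle=0,$$
so $(\mathrm{Id}-\Theta)u\in N^\perp=M$. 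Specialising to $u=k\in N$ gives $k-\Theta k\in M$, and therefore $\Psi_p(k)\in M$ is well defined.

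The fixed-point identity is then a one-line calculation. For $v=k+\Psi_p(k)$ one has
$$\Theta v-v=(\Theta k-k)+(\Theta-\mathrm{Id})\Psi_p(k)=(\Theta k-k)+(k-\Theta k)=0,$$
by the very definition of $\Psi_p$. Hence $\Theta v=v$ on $E^c_{0,p-1}$, as claimed.

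The only step with any genuine content is the inclusion $(\mathrm{Id}-\Theta)(N)\subset M$; once one recognises it as the dual of the identity $\Theta^t|_N=\mathrm{Id}$ under the pairing induced by the antisymmetry of $\Omega_{\pi^0}$, the rest of the argument is bookkeeping, and I do not expect any hidden obstacle.
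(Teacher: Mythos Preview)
Your proposal is correct and follows essentially the same route as the paper: both arguments establish well-definedness of $\Psi_p$ by (i) using Proposition~\ref{hyp} to get invertibility of $\Theta_{0,p-1}-\mathrm{Id}$ on $\im\,\Omega_{\pi^0}$, and (ii) using $(\Theta_{0,p-1}^t)^{-1}|_{\ker\Omega_{\pi^0}}=\mathrm{Id}$ together with the orthogonality $\im\,\Omega_{\pi^0}=(\ker\Omega_{\pi^0})^\perp$ to land $k-\Theta_{0,p-1}k$ in $\im\,\Omega_{\pi^0}$; the fixed-point identity is then the same one-line computation. The only thing you omit that the paper records is the dimension count $\dim E^c_{0,p-1}=\dim\ker\Omega_{\pi^0}=d-2$, which is what justifies calling $E^c_{0,p-1}$ \emph{the} central direction rather than merely a subspace of fixed vectors.
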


\dem Since $\Theta_{0,p-1}-\mathrm{Id}$ is not invertible on $\mathbb{R}^\mathcal{A}$, firstly we  show that $\Psi_p$ is well defined. We claim  that $k- \Theta_{0,p-1}(k)\in \im \ \Omega_{\pi^0}.$ Indeed, using the fact that $(\Theta^t_{0,p-1})^{-1}|_{\ker\ \Omega_{\pi^0}}=\mathrm{Id},$ we have that for all $u\in \ker \ \Omega_{\pi^0}$

\begin{eqnarray*}
<u, k- \Theta_{0,p-1}(k)>&=&<u, k>-< \Theta^t_{0,p-1}(u), k>\\
& =& <u, k>-<u, k>=0.
\end{eqnarray*}
Therefore $k- \Theta_{0,p-1}(k)\perp \ker \ \Omega_{\pi^0},$ which proves our claim. By Proposition \ref{hyp} we have that $\Theta_{0,p-1}$ is hyperbolic in $ \im \ \Omega_{\pi^0}$, so  $$\Theta_{0,p-1}-\mathrm{Id}\colon \im \ \Omega_{\pi^0} \rightarrow \im \ \Omega_{\pi^0} $$ is invertible on  $\im \ \Omega_{\pi^0}$ and  we can define
$$\Psi_p(k):=(\Theta_{0,p-1}-\mathrm{Id})^{-1}(k- \Theta_{0,p-1}(k)).$$
We claim that  $\Theta_{0,p-1}v=v$ for every $v\in E^c_{0,p-1}$. Indeed by the definition of $\Psi_p$
$$(\Theta_{0,p-1}-Id)(k+\Psi_p(k))=0.$$
Note that  $\mathrm{dim}\ E^c_{0,p-1}= \mathrm{dim}\ \ker \Om_{\pi^0}=d-2.$ So $E^c_{0,p-1}$ is the central direction.\cqd
The  next result shows the invariance of $E^c_{0,p-1}$ by the Rauzy-Veech cocycle.
\begin{lemma}
\label{inv}  $\Theta_{\pi^0}(E^c_{0,p-1})=E^c_{1,p}.$
\end{lemma}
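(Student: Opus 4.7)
The plan is to realize $E^c_{1,p}$ via Lemma \ref{Ec} applied to the cyclically shifted periodic sequence, then transport the fixed-point characterization by a conjugation relation between $\Theta_{0,p-1}$ and $\Theta_{1,p}$ that comes for free from the periodicity assumption.

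First I would observe that, by the hypothesis $(\pi^{n+p},\ve^{n+p})=(\pi^n,\ve^n)$ for every $n$, the sequence $(\pi^1,\ve^1),\dots,(\pi^p,\ve^p)$ is again a full period of the same combinatorics, so Lemma \ref{Ec} applies to it verbatim. In particular, $(\Theta_{1,p}^t)^{-1}|_{\ker\Omega_{\pi^1}}=\mathrm{Id}$, the map $\Theta_{1,p}-\mathrm{Id}\colon \im\Omega_{\pi^1}\to\im\Omega_{\pi^1}$ is invertible by the uniform hyperbolicity of Proposition \ref{hyp}, and $E^c_{1,p}$ is well defined and equals the $(d-2)$-dimensional fixed subspace $\{v\in\R^\A:\Theta_{1,p}v=v\}$.

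Next I would establish the conjugation relation
\begin{equation*}
\Theta_{1,p}\circ\Theta_0 \;=\; \Theta_0\circ\Theta_{0,p-1}.
\end{equation*}
This is a direct unfolding: $\Theta_{1,p}\circ\Theta_0=\Theta_p\Theta_{p-1}\cdots\Theta_1\Theta_0=\Theta_p\circ\Theta_{0,p-1}$, and by periodicity $\Theta_p=\Theta_{\pi^p,\ve^p}=\Theta_{\pi^0,\ve^0}=\Theta_0$. Equivalently, $\Theta_{1,p}=\Theta_0\,\Theta_{0,p-1}\,\Theta_0^{-1}$, since the elementary Rauzy-Veech matrices $\Theta_{\pi,\ve}$ are invertible.

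From here the result is immediate. If $v\in E^c_{0,p-1}$, i.e., $\Theta_{0,p-1}v=v$, then
\begin{equation*}
\Theta_{1,p}(\Theta_0 v)\;=\;\Theta_0\,\Theta_{0,p-1}\,\Theta_0^{-1}(\Theta_0 v)\;=\;\Theta_0(\Theta_{0,p-1} v)\;=\;\Theta_0 v,
\end{equation*}
so $\Theta_0 v\in E^c_{1,p}$ and therefore $\Theta_{\pi^0}(E^c_{0,p-1})\subseteq E^c_{1,p}$. Since $\Theta_{\pi^0}$ is invertible and both $E^c_{0,p-1}$ and $E^c_{1,p}$ have dimension $d-2$ (as noted at the end of Lemma \ref{Ec}), the inclusion is an equality. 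I do not foresee a real obstacle here: the entire content of the lemma is packaged in the observation that a conjugation of two linear operators maps fixed spaces to fixed spaces, and the only thing to check beyond that is that the definition of $E^c_{1,p}$ is legitimate, which follows from the periodicity by the same argument used for $E^c_{0,p-1}$.
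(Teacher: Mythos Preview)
Your proof is correct and follows essentially the same approach as the paper: both arguments use periodicity to identify $\Theta_{\pi^p}=\Theta_{\pi^0}$, then observe that applying $\Theta_{\pi^0}$ conjugates $\Theta_{0,p-1}$ into $\Theta_{1,p}$, sending fixed vectors to fixed vectors. Your write-up is in fact slightly more complete than the paper's, which only exhibits the inclusion $\Theta_{\pi^0}(E^c_{0,p-1})\subseteq E^c_{1,p}$ and leaves the equality (via invertibility of $\Theta_{\pi^0}$ and $\dim E^c_{0,p-1}=\dim E^c_{1,p}=d-2$) implicit.
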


\dem
Let $v\in E^c_{0,p-1}.$ Then
$$\Theta_{\pi^{p-1}}\cdots \Theta_{\pi^0}(v)=v.$$
Applying $\Theta_{\pi^0}$ to both sides
$$\Theta_{\pi^0}\cdot \Theta_{\pi^{p-1}}\cdots \Theta_{\pi^1}\cdot\Theta_{\pi^0}(v)=\Theta_{\pi^0}(v).$$
So
$$\Theta_{1,p}\cdot\Theta_{\pi^0}(v)=\Theta_{\pi^0}(v) \Rightarrow \Theta_{\pi^0}(v)\in E^c_{1,p}.$$

\cqd

We now prove that the Kontsevich-Zorich cocycle behaves as a quasi-isometry in its central direction. By Proposition \ref{hyp} we can choose $n_0>0$ and $\mu >>1$ such that
$$\|\Theta_{n,n+n_0}(x)\|\geq \mu \|x\| \;\forall x\in C^u_{\pi^n}\quad \text{and}\quad\|\Theta_{n+n_0,n}(x)\|\geq \mu \|x\| \;\forall x\in C^s_{\pi^n}.$$

For $\epsilon>0,$ define the cones  $C^n_{\epsilon,u}$ and $C^n_{\epsilon,s}$, where $C^n_{\epsilon,u}$ is the set of vectors $x=x_k+x_i\in \ker \Omega_{\pi^n}\oplus \im \Omega_{\pi^n}$ such that
\begin{itemize}
\item $\|x_k\|\leq \epsilon \|x_i\|$,\\
\item We have that $x_i= x_i^s + x_i^u$, where $ x_i^s\in\Theta_{n+n_0-1,n}C^s_{\pi^{n+n_0}}\subset C^s_{\pi^n}$,  $x_i^u\in\Theta_{n-n_0,n-1}C^u_{\pi^{n-n_0}}\subset C^u_{\pi^n}$ and  $\|x_i^s\|\leq\|x_i^u\|.$
\end{itemize}
and  we define analogously $C^n_{\epsilon,s}$ replacing  the last condition by $\|x_i^u\|\leq\|x_i^s\|.$ Define also
 $C^n_{\epsilon}:=C^n_{\epsilon,u}\cup C^n_{\epsilon,s}$.
\begin{pro}
\label{inv.cone}
There exists $\epsilon_0=\epsilon_0(k)>0$ and $\gamma <1$ such that if $\epsilon<\epsilon_0$ then
$$\Theta_{n,n+n_0-1}(C^n_{\epsilon,u})\subset C^{n+n_0}_{\gamma\epsilon,u}\;\text{ and }\; \Theta_{n-1,n-n_0}(C^n_{\epsilon,s})\subset C^{n-n_0}_{\gamma\epsilon,s}.$$
\end{pro}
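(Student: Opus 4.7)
My plan is to apply Proposition \ref{hyp} directly to the $\im \ \Omega$-components of the decomposition, and to combine this with the uniform $k$-bounded control on the cocycle's operator norm to handle the $\ker \Omega$ mixing.

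Take $x = x_k + x_i^s + x_i^u \in C^n_{\epsilon,u}$ and set $\Theta := \Theta_{n,n+n_0-1}$. By \eqref{semiconj}, $\Theta$ preserves $\im \ \Omega$, so $\Theta(x_i^s),\Theta(x_i^u) \in \im \ \Omega_{\pi^{n+n_0}}$; however $\Theta$ does not preserve $\ker \Omega$ in general, and I write $\Theta(x_k) = y_k + y_{k \to i}$ with $y_k \in \ker \Omega_{\pi^{n+n_0}}$ and $y_{k \to i} \in \im \ \Omega_{\pi^{n+n_0}}$. The $k$-boundedness of the combinatorics gives a uniform bound $\|\Theta\| \leq C = C(k)$ on any $n_0$-step product, so $\|y_k\|, \|y_{k \to i}\| \leq C\|x_k\|$. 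By Proposition \ref{hyp} applied over $n_0$ steps, $\|\Theta(x_i^u)\| \geq \mu \|x_i^u\|$ and $\|\Theta(x_i^s)\| \leq \mu^{-1}\|x_i^s\|$, with $\mu \gg 1$.

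Crucially, by construction $\Theta(x_i^u) \in \Theta_{n-n_0,n+n_0-1} C^u_{\pi^{n-n_0}} \subset \Theta_{n,n+n_0-1} C^u_{\pi^n}$; that is, $\Theta(x_i^u)$ already lies in the deep unstable cone required at time $n+n_0$. Since $\im \ \Omega_{\pi^{n+n_0}}$ is two-dimensional and the deep cones at $n+n_0$ are narrow sectors around their respective one-dimensional limiting directions (converging to $E^u$ and $E^s$ by the hyperbolicity in Proposition \ref{hyp}), their interiors meet only at $0$ and their linear spans are transverse; the associated direct-sum projections have norm bounded by some $K = K(k)$ uniformly in $n$.

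Write $\Theta(x) = y_k + y_i^s + y_i^u$ using this decomposition, so $y_i^u = \Theta(x_i^u) + P^u(\Theta(x_i^s) + y_{k\to i})$ and $y_i^s = P^s(\Theta(x_i^s) + y_{k\to i})$, where $P^u,P^s$ are the projections onto the deep cones. Using $\|x_i^s\|\leq\|x_i^u\|$ and $\|x_k\|\leq \epsilon\|x_i\|\leq 2\epsilon\|x_i^u\|$,
\begin{align*}
\|y_i^u\| &\geq \mu\|x_i^u\| - K\mu^{-1}\|x_i^u\| - 2KC\epsilon\|x_i^u\|, \\
\|y_i^s\| &\leq K\mu^{-1}\|x_i^u\| + 2KC\epsilon\|x_i^u\|, \\
\|y_k\| &\leq 2C\epsilon\|x_i^u\|.
\end{align*}
For $\mu$ large enough (a consequence of Proposition \ref{hyp} applied over $n_0$ steps) and $\epsilon < \epsilon_0(k)$ small enough, both $\|y_i^s\| \leq \|y_i^u\|$ and $\|y_k\|/(\|y_i^s\| + \|y_i^u\|) \leq \gamma\epsilon$ hold for some $\gamma < 1$, placing $\Theta(x) \in C^{n+n_0}_{\gamma\epsilon,u}$. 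The $s$-cone statement follows by the symmetric argument applied to $\Theta_{n-1, n-n_0} = \Theta_{n-n_0, n-1}^{-1}$, invoking part (b) of Proposition \ref{hyp} for contraction of unstable vectors and expansion of stable vectors under the inverse cocycle. The main technical point is the uniform transversality of the deep cones inside $\im \ \Omega_{\pi^{n+n_0}}$, which reduces to a uniform lower bound on the angle between them; this in turn follows from the uniform hyperbolicity of Proposition \ref{hyp} restricted to $k$-bounded combinatorics.
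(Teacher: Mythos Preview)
Your overall strategy---decompose, apply hyperbolicity to the $\im\ \Omega$ part, and control the $\ker\Omega$ leakage---matches the paper's, but there is a genuine gap in how you bound the kernel component.

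You bound the kernel piece $y_k$ of $\Theta(x_k)$ by the crude operator-norm estimate $\|y_k\|\leq C\|x_k\|\leq 2C\epsilon\|x_i^u\|$, where $C=C(k,n_0)$ is the norm of an $n_0$-step product. Combined with $\|y_i\|\gtrsim \mu\|x_i^u\|$, this gives $\|y_k\|/\|y_i\|\lesssim (2C/\mu)\,\epsilon$. For this to yield a contraction factor $\gamma<1$ you would need $\mu>2C$. But $\mu$ is an expansion rate \emph{inside} the unstable cone for the very same operator, so necessarily $\mu\leq C$; hence $2C/\mu\geq 2$ and no choice of $n_0$ or $\epsilon_0$ rescues the inequality. (Your claim that ``$C=C(k)$'' is independent of $n_0$ is also not correct: the individual matrices $\Theta_{\pi,\varepsilon}=\mathbb{I}+E_{\alpha\beta}$ have norm $2$, so an $n_0$-step product can have norm up to $2^{n_0}$, which grows with $n_0$ at least as fast as $\mu$.)

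The paper avoids this obstruction via Lemma~\ref{iso.ker}: because ${}^t\Theta$ maps $\ker\Omega_{\pi^{n+n_0}}$ onto $\ker\Omega_{\pi^n}$ (indeed acts as an isomorphism between the explicit integer bases), one has the \emph{exact} identity
\[
\|\Theta_{n,n+n_0-1}(x_k)\|_{\ker\Omega_{\pi^{n+n_0}}}=\|x_k\|_{\ker\Omega_{\pi^n}}\leq \|x_k\|\leq 2\epsilon\|x_i^u\|,
\]
with no factor $C$. This replaces your $2C$ in the numerator by $2$, and then one only needs $\mu-\mu^{-1}>2$, which is achievable by taking $n_0$ large; the operator norm $C$ appears only in the lower-order term $2\epsilon C$, which is killed by choosing $\epsilon_0$ small. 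Without invoking this isometry of the kernel projection (or some equivalent algebraic fact about the action of $\Theta^t$ on $\ker\Omega$), the cone-contraction inequality cannot be closed.
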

Before proving the Proposition \ref{inv.cone} we need some lemmas.
\begin{lemma} \label{25} 
%
There exists $C_{5} >0$ such that for   $n_0$ large enough and  all $m > n_0$ there are linear projections $\Pi_{m}^s$ and $\Pi_{m}^u$ defined in $Im\ \Omega_{\pi^{m}}$    such that for every $v\in \ Im\ \Omega_{\pi^{m}}$ we have that  $v_s=\Pi_{m}^s(v)\in \pm \Theta_{m+n_0-1,m}(C^s_{\pi^{m+n_0}})$ and  $v_u=\Pi_{m}^u(v) \in \pm \Theta_{m-n_0,m-1}C^u_{\pi^{m-n_0}}$ satisfy $v= v_s+v_u$ and
\begin{equation} \label{estcoones} \|v_s\|, \| v_u \| \leq C_{5} \|v\|.\end{equation}
\end{lemma}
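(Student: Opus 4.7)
Since $f$ has genus one, $\im \Omega_{\pi^m}$ is $2$-dimensional, so the plan is to produce two transverse $1$-dimensional subspaces $L^s_m$ and $L^u_m$ of $\im\Omega_{\pi^m}$ with $L^s_m \subset \pm\Theta_{m+n_0-1,m}(C^s_{\pi^{m+n_0}})$ and $L^u_m \subset \pm\Theta_{m-n_0,m-1}(C^u_{\pi^{m-n_0}})$, and to take $\Pi^s_m$, $\Pi^u_m$ to be the linear projections onto $L^s_m$ along $L^u_m$ and onto $L^u_m$ along $L^s_m$ respectively. The containments required by the lemma will then be automatic, since each $\Pi^{\ast}_m v$ is forced to lie on the line $L^{\ast}_m$, one of whose two rays sits inside the corresponding iterated cone.

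First I would invoke the cone invariance $\Theta_{\pi,\ve}^{-1}C^s_{\pi'}\subset C^s_\pi$ and $\Theta_{\pi,\ve}C^u_\pi\subset C^u_{\pi'}$ together with Proposition \ref{hyp} to show that the nested families of convex cones $\Theta_{m+n_0-1,m}(C^s_{\pi^{m+n_0}})$ and $\Theta_{m-n_0,m-1}(C^u_{\pi^{m-n_0}})$ shrink exponentially in projective diameter to the invariant directions $E^s_m$ and $E^u_m$ defined in \eqref{stable}--\eqref{unstable} as $n_0\to\infty$. This uses the elementary fact that a uniformly hyperbolic linear map in dimension two contracts any strictly forward-invariant wedge onto its attracting line at a rate governed by the hyperbolic ratio $\mu$ from Proposition \ref{hyp}.

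The main obstacle is to show that the angle between $E^s_m$ and $E^u_m$ is bounded below by some $\theta_0=\theta_0(k)>0$ uniformly in $m$ and over all $k$-bounded sequences $(\pi^j,\ve^j)$. The plan here is a Birkhoff-type projective contraction argument: a large iterate of $\Theta$ sends the projective image of any cone in $\im\Omega_{\pi^m}$ that does not contain the full projective line into an arbitrarily tiny neighborhood of a single point, and one then exploits both finiteness of $\Pi^1$ and the $k$-bounded combinatorics to upgrade this to a uniform bound. The relation \eqref{empker}, namely $C^u_\pi\cap\ker\Omega_\pi=\{0\}$, together with its analogue for $C^s_\pi$, will be crucial to prevent the cones from degenerating.

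Once uniform transversality is in hand, for $n_0$ sufficiently large each line $E^s_m$ lies in the interior of $\pm\Theta_{m+n_0-1,m}(C^s_{\pi^{m+n_0}})$ and each line $E^u_m$ lies in the interior of $\pm\Theta_{m-n_0,m-1}(C^u_{\pi^{m-n_0}})$; setting $L^s_m=E^s_m$ and $L^u_m=E^u_m$, elementary linear algebra in dimension two gives the projection norm bound $\|\Pi^s_m\|,\|\Pi^u_m\|\leq C_5$ with $C_5$ depending only on $\sin\theta_0$, which is the estimate \eqref{estcoones}.
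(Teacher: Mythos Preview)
Your approach is correct but takes a substantially more elaborate route than the paper. The paper never invokes the invariant directions $E^s_m$, $E^u_m$, nor any projective contraction estimate. Instead, it fixes an $n_0$ and observes that, because $k$-bounded combinatorics take values in a finite set, there are only finitely many possible matrices $\Theta_{m-n_0,m-1}$, $\Theta_{m+n_0-1,m}$, finitely many spaces $\im\Omega_{\pi^m}$, and finitely many cones $\Theta_{m+n_0-1,m}(C^s_{\pi^{m+n_0}})$, $\Theta_{m-n_0,m-1}(C^u_{\pi^{m-n_0}})$. For each of these finitely many configurations one simply picks \emph{any} vectors $w_s$ in the iterated $s$-cone and $w_u$ in the iterated $u$-cone, and defines the projections along $\langle w_s\rangle$, $\langle w_u\rangle$. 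The disjointness $\pm\overline{\Theta_{m+n_0-1,m}(C^s_{\pi^{m+n_0}})}\cap\pm\overline{\Theta_{m-n_0,m-1}(C^u_{\pi^{m-n_0}})}=\{0\}$ (an immediate consequence of Proposition~\ref{hyp}) guarantees each projection has finite norm, and $C_5$ is just the maximum over the finitely many configurations and choices. Finally, monotonicity of the nested cones shows the same $C_5$ works for any $n_1\geq n_0$.

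Your route, by contrast, isolates the limiting invariant lines and proves a uniform angle bound between them; this is conceptually cleaner and pins down the ``canonical'' splitting, but costs you the extra work of the Birkhoff contraction argument (which you only sketch). The paper's argument trades this for a one-line compactness step, at the price of making arbitrary choices of $w_s$, $w_u$. Note also that your step ``for $n_0$ sufficiently large each line $E^s_m$ lies in the interior'' is unnecessary: the containments $E^s_m\subset\pm\Theta_{m+n_0-1,m}(C^s_{\pi^{m+n_0}})$ and $E^u_m\subset\pm\Theta_{m-n_0,m-1}(C^u_{\pi^{m-n_0}})$ hold for every $n_0$ directly from the definitions \eqref{stable}--\eqref{unstable}, and the lemma does not ask for the interior.
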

\dem For a fixed $n_0$ there exists only a finite number  of matrices $\Theta_{m-n_0,m-1}$ and  $\Theta_{m+n_0-1,m}$. The same holds for the subspaces $Im\ \Omega_{\pi^{m}}$ and cones $C^u_{\pi^{m-n_0}}$, $C^s_{\pi^{m+n_0}}$,  $\Theta_{m+n_0-1,m}(C^s_{\pi^{m+n_0}})$ and  $\Theta_{m-n_0,m-1}C^u_{\pi^{m-n_0}}$. Moreover
\begin{equation} \label{finitecond}\pm \overline{\Theta_{m+n_0-1,m}(C^s_{\pi^{m+n_0}})}\cap \pm \overline{\Theta_{m-n_0,m-1}C^u_{\pi^{m-n_0}}} =\{ 0\}.\end{equation}
For each possible combination of matrices, cones e subspaces, choose $w_s \in \pm \Theta_{m+n_0-1,m}(C^s_{\pi^{m+n_0}})$ and $w_u \in \pm \Theta_{m-n_0,m-1}C^u_{\pi^{m-n_0}}$. Then
$v = c_s w_s + c_u w_u$, with $c_s, c_u \in \mathbb{R}$. Define $\Pi_{m}^s(v)=c_s w_s$ and $\Pi_{m}^u(v)=c_u w_u$. Let $C_5$ be  the supremum of the norms of all projections $\Pi_{m}^s$, $\Pi_{m}^u$ over all possible combinations of matrices, cones, spaces and choices of $w_s$ and $w_u$.  This supremum is finite due (\ref{finitecond}). Finally, note that the same $C_5$ satisfies (\ref{estcoones}) if we replace $n_0$ by some $n_1\geq n_0$, because $\Theta_{m+n_1-1,m}(C^s_{\pi^{m+n_1}}) \subset \Theta_{m+n_0-1,m}(C^s_{\pi^{m+n_0}})$ , $\Theta_{m-n_1,m-1}C^u_{\pi^{m-n_1}} \subset \Theta_{m-n_0,m-1}C^u_{\pi^{m-n_0}}$ and the freedom to choose $w_s \in \pm \Theta_{m+n_0-1,m}(C^s_{\pi^{m+n_0}})$ and $w_u \in \pm \Theta_{m-n_0,m-1}C^u_{\pi^{m-n_0}}$ as we like.  
\cqd
\begin{lemma}
\label{iso.ker}
For all $n>n_0$ and for all $x_k\in \ker\Omega_{\pi^n}$
\begin{eqnarray*}
\|\Theta_{n,n+n_0-1}(x_k)\|_{\ker\Omega_{\pi^{n+n_0}}}&:=& \sup_{k_{n+n_0}\in \ker\Omega_{\pi^{n+n_0}}}<\Theta_{n,n+n_0-1}(x_k),k_{n+n_0}>\\
&=& \|x_k\|_{\ker\Omega_{\pi^{n}}},
\end{eqnarray*}
where $<\cdot,\cdot>$ denote the usual inner product of $\mathbb{R}^\A.$
\end{lemma}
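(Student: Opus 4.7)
My plan is to combine adjoint duality with Viana's result that, in periodic combinatorics of period $p$, the transposed cocycle acts as the identity on the kernel: $\Theta_{0,p-1}^t|_{\ker\Omega_{\pi^0}} = \mathrm{Id}$ (Lemma~2.11 in \cite{viana}, which was already invoked in the proof of Lemma~\ref{Ec}).

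First, I would record the dual of the semiconjugacy \eqref{semiconj}: taking transpose of $\Theta_{\pi,\ve}\Omega_\pi = \Omega_{\pi'}(\Theta_{\pi,\ve}^t)^{-1}$ and using antisymmetry of $\Omega$ one obtains $\Omega_\pi\Theta_{\pi,\ve}^t = \Theta_{\pi,\ve}^{-1}\Omega_{\pi'}$, which shows that $\Theta_{\pi,\ve}^t$ sends $\ker\Omega_{\pi'}$ into $\ker\Omega_\pi$. Consequently Viana's argument applies verbatim to any translate of the periodic orbit, yielding $\Theta_{m,m+p-1}^t|_{\ker\Omega_{\pi^m}} = \mathrm{Id}$ for every base point $m \geq 0$.

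Next, I would arrange $n_0$ to be a multiple of the period $p$ (this adjustment is harmless: enlarging $n_0$ only strengthens the hyperbolicity estimate of Proposition~\ref{hyp}, possibly at the cost of replacing $\mu$ by a suitable power). Under this choice, periodicity gives $\pi^{n+n_0}=\pi^n$ and therefore $\ker\Omega_{\pi^{n+n_0}} = \ker\Omega_{\pi^n}$; telescoping the shifted Viana identity gives
\[
\Theta_{n,n+n_0-1}^t\big|_{\ker\Omega_{\pi^{n+n_0}}} = \mathrm{Id}.
\]
Hence for every $k_{n+n_0} \in \ker\Omega_{\pi^{n+n_0}}$ with $\|k_{n+n_0}\|=1$,
\[
\langle \Theta_{n,n+n_0-1}(x_k), k_{n+n_0}\rangle = \langle x_k, \Theta_{n,n+n_0-1}^t(k_{n+n_0})\rangle = \langle x_k, k_{n+n_0}\rangle,
\]
and Cauchy--Schwarz realizes the supremum at $k_{n+n_0} = x_k/\|x_k\|$ (which lies in $\ker\Omega_{\pi^{n+n_0}}$ precisely because this kernel coincides with $\ker\Omega_{\pi^n}$), giving the value $\|x_k\| = \|x_k\|_{\ker\Omega_{\pi^n}}$.

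The only mildly delicate point is the preliminary normalization $p \mid n_0$; once this is arranged, the lemma reduces to a clean adjoint-duality computation on top of the periodicity lemma of \cite{viana}. There is no real obstacle.
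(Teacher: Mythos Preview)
Your normalization $p\mid n_0$ is where the argument breaks. The whole purpose of Lemma~\ref{iso.ker} is to feed into Proposition~\ref{inv.cone} and then Proposition~\ref{central.bounded}, producing the bound $\|\Psi_p\|\le 1/\epsilon_0$ with an $\epsilon_0$ that does \emph{not} depend on the period $p$. This uniformity is exactly what Section~2.3 exploits: there one applies Proposition~\ref{central.bounded} to a sequence of periodic approximations $\gamma_n(f)$ with periods $p_n\to\infty$, and the uniform bound $1/\epsilon_0$ is what gives equicontinuity of $\{\Psi_{j,p_n}\}_n$ and hence the existence of the limiting central direction $E^c_{j,\infty}$. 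If you let $n_0$ be a multiple of $p$, then $n_0$---and with it the constant $C=\sup_n\|\Theta_{n,n+n_0-1}\|$ appearing in the proof of Proposition~\ref{inv.cone}, and therefore $\epsilon_0$ itself---becomes $p$-dependent, and the compactness argument in Section~2.3 collapses. Note also that Proposition~\ref{central.bounded} is stated under the hypothesis $p>n_0$, which is flatly incompatible with $p\mid n_0$.

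The paper's proof avoids this entirely. It uses only that ${}^t\Theta_{n,n+n_0-1}$ carries $\ker\Omega_{\pi^{n+n_0}}$ onto $\ker\Omega_{\pi^n}$, a fact that holds for \emph{any} block of Rauzy--Veech steps: you yourself derived the inclusion from the transposed semiconjugacy, and surjectivity then follows from invertibility of $\Theta$ together with the equality $\dim\ker\Omega_{\pi^n}=\dim\ker\Omega_{\pi^{n+n_0}}=d-2$. No relation between $n_0$ and $p$ is needed, so $n_0$ and $\epsilon_0$ can be fixed once and for all in terms of the combinatorial bound $k$ alone. Your identity $\Theta^t|_{\ker}=\mathrm{Id}$ is strictly stronger than what the paper invokes, and it is precisely this extra strength that forces the fatal normalization.
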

\dem
Note that ${}^t\Theta_{n,n+n_0-1}(\ker \Omega_{\pi^{n+n_0}})=\ker\Omega_{\pi^n}.$ Therefore
\begin{eqnarray*}
\lefteqn{\sup_{k_{n+n_0}\in \ker\Omega_{\pi^{n+n_0}}}<\Theta_{n,n+n_0-1}(x_k),k_{n+n_0}>=}\\
& & =  \sup_{k_{n+n_0}\in \ker\Omega_{\pi^{n+n_0}}}<x_k,{}^t\Theta_{n,n+n_0-1}(k_{n+n_0})>\\
& & =  \sup_{k_{n}\in \ker\Omega_{\pi^{n}}}<x_k,k_{n}>\\
& & = \|x_k\|_{\ker\Omega_{\pi^{n}}}.
\end{eqnarray*}
\cqd
\begin{proof}[Proof of Proposition \ref{inv.cone}:]
Let $x=x_i+x_k\in C^n_{\epsilon,u}.$ First note that
\begin{eqnarray}
\label{xi}
\|\Theta_{n,n+n_0-1}(x_i)\| &=&\|\Theta_{n,n+n_0-1}(x_i^u+x_i^s)\|\nonumber\\
& \geq & \mu\|x_i^u\|-\frac{1}{\mu}\|x_i^s\|\nonumber\\
& \geq & \left(\mu-\frac{1}{\mu}\right)\|x_i^u\|,
\end{eqnarray}
and that
\begin{eqnarray}
\label{im}
\|\Theta_{n,n+n_0-1}(x_k)\|_{\im\Omega_{\pi^{n+n_0}}}&\leq& \|\Theta_{n,n+n_0-1}(x_k)\|\nonumber\\
&\leq&\|\Theta_{n,n+n_0-1}\|\cdot \|x_k\|\nonumber\\
&\leq &\epsilon\cdot\|\Theta_{n,n+n_0-1}\|\cdot\|x_i\|.
\end{eqnarray}
Note that by Lemma \ref{iso.ker} we have
\begin{eqnarray}
\label{new}
\|\Theta_{n,n+n_0-1}(x_k)\|_{\ker\ \Omega_{\pi^{n+n_0}}}&=& \|x_k\|_{\ker\ \Omega_{\pi^{n}}}\leq \epsilon\|x_i\|\nonumber\\
& \leq & 2\epsilon\|x_i^u\|.
\end{eqnarray}
Then
\begin{eqnarray}
\lefteqn{\|\Theta_{n,n+n_0-1}(x)\|_{\im\Omega_{\pi^{n+n_0}}}}\nonumber\\
 & & \geq  \|\Theta_{n,n+n_0-1}(x_i)\|_{\im\Omega_{\pi^{n+n_0}}}-\|\Theta_{n,n+n_0-1}(x_k)\|_{\im\Omega_{\pi^{n+n_0}}}\nonumber\\
& &\geq  \left(\mu-\frac{1}{\mu}\right)\|x_i^u\| - 2\epsilon\cdot\|\Theta_{n,n+n_0-1}\|\cdot\|x_i^u\|, \text{ by } \eqref{xi}\nonumber\\
& & \geq  \frac{1}{2\epsilon}\left(\mu-\frac{1}{\mu}-2\epsilon\cdot\|\Theta_{n,n+n_0-1}\|\right) \|\Theta_{n,n+n_0-1}(x)\|_{\ker\Omega_{\pi^{n+n_0}}}\nonumber\\
& & \geq  \frac{1}{2\epsilon}\left(\mu-\frac{1}{\mu}-2\epsilon\cdot C\right) \|\Theta_{n,n+n_0-1}(x)\|_{\ker\Omega_{\pi^{n+n_0}}}
\end{eqnarray}
Here $C$ depends only on $n_0$. Therefore
\begin{eqnarray}
\lefteqn{\|\Theta_{n,n+n_0-1}(x)\|_{\ker\Omega_{\pi^{n+n_0}}}}\nonumber\\
& & \leq 2\epsilon\left(\mu-\frac{1}{\mu}-2\epsilon\cdot C\right)^{-1} \|\Theta_{n,n+n_0-1}(x)\|_{\im\Omega_{\pi^{n+n_0}}}.
\end{eqnarray}
Choose $\epsilon_0$  small enough such that

$$\gamma:= 2\epsilon_0\left(\mu-\frac{1}{\mu}-2\epsilon_0\cdot C\right)^{-1} < 1.$$

Note that
$$\Theta_{n,n+n_0-1}(x_i^u)\in\Theta_{n,n+n_0-1}C^u_{\pi^{n}}$$
and
$$v=\Theta_{n,n+n_0-1}(x_i^s)\in  C^s_{\pi^{n+n_0}} $$
Let $v_s$ and $v_u$, $v=v_s+v_u$, be as in  Lemma \ref{25}. Note that
$$\|v_s\|, \|v_u\|\leq C_5 \|v\|\leq \frac{C_5}{\mu} \| x_i^s  \|\leq \frac{C_5}{\mu} \| x_i^u  \| \leq \frac{C_5}{\mu^2} \|  \Theta_{n,n+n_0-1}(x_i^u)\ \| $$
Then $v_u + \Theta_{n,n+n_0-1}(x_i^u) \in \Theta_{n,n+n_0-1}C^u_{\pi^{n}}$ and $v_s \in \Theta_{n+2n_0-1,n+n_0}(C^s_{\pi^{n+2n_0}})$ and moreover
$$\|v_u + \Theta_{n,n+n_0-1}(x_i^u)\|\geq  (1-\frac{C_5}{\mu^2})\|  \Theta_{n,n+n_0-1}(x_i^u)\geq \frac{\mu^2}{C_5}(1-\frac{C_5}{\mu^2})\|  v_s\|\geq \|v_s\|,$$
so $\Theta_{n,n+n_0-1}(x)\in C^{n+n_0}_{\gamma\epsilon,u}.$ The proof of $\Theta_{n-1,n-n_0}(C^n_{\epsilon,s})\subset C^{n-n_0}_{\gamma\epsilon,s}$ is analogous.

\end{proof}
\begin{pro}
\label{central.bounded}
Suppose that $p>n_0.$ Then
$$\sup_{{k\in \ker\Omega_{\pi^0}}\atop{k\not=0}}\frac{\|\Psi_p(k)\|}{\|k\|}\leq \frac{1}{\epsilon_0}.$$
\end{pro}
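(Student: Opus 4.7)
The plan is to argue by contradiction, combining the fixed-point property from Lemma \ref{Ec} with the cone contraction from Proposition \ref{inv.cone} and the $p$-periodicity of the combinatorics. Assume $k \in \ker\Omega_{\pi^0}$ is nonzero and suppose that $\|\Psi_p(k)\|/\|k\| > 1/\epsilon_0$, so that $\epsilon := \|k\|/\|\Psi_p(k)\| < \epsilon_0$. Consider the central-direction vector $v = k + \Psi_p(k)$, which by Lemma \ref{Ec} is fixed by $\Theta_{0,p-1}$ and whose kernel-to-image ratio equals $\epsilon$.

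Next I would place $v$ inside one of the cones $C^0_{\epsilon,u}$ or $C^0_{\epsilon,s}$. Using Lemma \ref{25} I decompose $\Psi_p(k) \in \im\Omega_{\pi^0}$ into its stable and unstable projections (negative indices being interpreted by periodic extension, which makes sense precisely because $p > n_0$). After possibly replacing $v$ by $-v$ and swapping the roles of the two cones, I may assume the unstable component dominates in norm and that both projections sit in their positive cones, so that $v \in C^0_{\epsilon,u}$; the alternative case uses $C^0_{\epsilon,s}$ and is handled symmetrically by backward iteration, via the second conclusion of Proposition \ref{inv.cone}. Since $\gamma^j \epsilon < \epsilon_0$ for every $j \geq 0$, iterating Proposition \ref{inv.cone} yields
$$\Theta_{0,Nn_0-1}(v) \in C^{Nn_0}_{\gamma^N \epsilon,u} \quad \text{for every } N \geq 1.$$

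To close the argument, choose $N$ to be a positive multiple of $p/\gcd(n_0,p)$, so that $Nn_0 = mp$ for some positive integer $m$. The $p$-periodicity of $(\pi^n,\ve^n)$ then identifies $C^{Nn_0}_{\gamma^N \epsilon,u}$ with $C^{0}_{\gamma^N \epsilon,u}$, and the fixed-point property gives
$$\Theta_{0,Nn_0-1}(v) = (\Theta_{0,p-1})^m(v) = v.$$
Hence $v \in C^{0}_{\gamma^N \epsilon,u}$ for arbitrarily large $N$, which forces $\|k\| \leq \gamma^N \epsilon \|\Psi_p(k)\|$; letting $N \to \infty$ along multiples of $p/\gcd(n_0,p)$ yields $\|k\|=0$, contradicting $k \neq 0$. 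The main obstacle I anticipate is the sign bookkeeping around Lemma \ref{25}'s $\pm$-decomposition: one must verify that after the reflection $v \mapsto -v$ and the stable/unstable case split, $v$ indeed belongs to the intended positive cone, so that Proposition \ref{inv.cone} applies as stated. Everything else (in particular the inductive hypothesis $\gamma^j \epsilon < \epsilon_0$) is immediate from $\gamma < 1$.
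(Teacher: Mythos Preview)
Your argument is correct and shares the paper's core idea: place $v=k+\Psi_p(k)$ in the cone $C^0_\epsilon$ and exploit the contraction of Proposition~\ref{inv.cone} together with $p$-periodicity. The endgame differs slightly. The paper takes $k_0$ realizing the supremum of $\|\Psi_p(k)\|/\|k\|$, applies $\Theta_{0,n_0p-1}$ once, observes that the image is again of the form $\tilde{k}_0+\Psi_p(\tilde{k}_0)\in E^c_{0,p-1}$ (by periodicity and Lemma~\ref{inv}), and reads off a ratio $\geq 1/(\gamma\epsilon)>1/\epsilon$, contradicting maximality. You instead fix an arbitrary offending $k$, iterate the contraction along multiples of $\operatorname{lcm}(n_0,p)$, and use the fixed-point property $\Theta_{0,p-1}v=v$ to force $\|k\|\le\gamma^N\epsilon\|\Psi_p(k)\|\to 0$. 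Your version avoids invoking that the supremum is attained; the paper's version needs only a single contraction step. Both are short and equivalent in substance, and the sign issue you flag around Lemma~\ref{25} is present (and equally glossed over) in the paper's own argument.
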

\dem
Suppose by contradiction that this claim is false. Then we could find $\epsilon<\epsilon_0$  such that
\begin{eqnarray}
\label{bounded}
\sup_{{k\in \ker\Omega_{\pi^0}}\atop{k\not=0}}\frac{\|\Psi_p(k)\|}{\|k\|}=\frac{1}{\epsilon}> \frac{1}{\epsilon_0}.
\end{eqnarray}
Let $k_0\in \ker\Omega_{\pi^0}$ be such that the supremum above is attained on it. Thus $k_0+\Psi_p(k_0)\in C^0_{\epsilon}:=C^0_{\epsilon,u}\;\dot{\cup}\;C^0_{\epsilon,s}.$ Assume, without loss of generality,  that $k_0+\Psi_p(k_0)\in C^0_{\epsilon,u}.$ By assumption  $E^c_{0,p-1}=E^c_{n_0p,(n_0+1)p-1}$ and by Lemma \ref{inv}  we have $\Theta_{0,n_0p-1}(E^c_{0,p-1})=E^c_{n_0p,(n_0+1)p-1}$ . Thus by Proposition \ref{inv.cone}
$$\tilde{k}_0+\Psi_p(\tilde{k}_0)=\Theta_{0,n_0p-1}(k_0+\Psi_p(k_0))\in C^{n_0p}_{\gamma\epsilon,u}\Longleftrightarrow$$
$$\|\tilde{k}_0\|\leq\gamma\epsilon \|\Psi_p(\tilde{k}_0)\|\Longleftrightarrow \frac{ \|\Psi_p(\tilde{k}_0)\|}{\|\tilde{k}_0\|}\geq \frac{1}{\gamma\epsilon}>\frac{1}{\epsilon},$$
which contradicts \eqref{bounded}.
\cqd
\subsection{Central direction: arbitrary $k$-bounded  combinatorics} Let $f\in\mathcal{B}_{k}^{2+\nu}$ and $\gamma(f)=\{\pi^i,\ve^i\}_{i\in\N}$ be  its combinatorics. For each $n\in\N$ we define the new periodic combinatorics, that will be denoted by  $\gamma_n(f)=\{\tilde{\pi}^i,\tilde{\ve}^i\}_{i\in\N}$:
\begin{itemize}
\item[(a)] For $i\leq n$ define $(\tilde{\pi}^i,\tilde{\ve}^i)=(\pi^i,\ve^i),$ and denote  $\tilde{\gamma}_n=\{\tilde{\pi}^i,\tilde{\ve}^i\}_{i=0}^n;$
\item[(b)] Let $\tilde{\gamma}_{n,p_n}=\{(\tilde{\pi}^n,\tilde{\ve}^n),...,(\tilde{\pi}^{p_n},\tilde{\ve}^{p_n})\}$ be an admissible sequence of combinatorics, i.e., $r_{\tilde{\ve}^i}(\tilde{\pi}^i)=\tilde{\pi}^{i+1}$ for all $n\leq i <p_n$ with $(\tilde{\pi}^{p_n},\tilde{\ve}^{p_n})=(\pi^0,\ve^0).$ It is possible to get this sequence by \cite{viana}.
\end{itemize}
Then define $\gamma_n(f)=(\tilde{\gamma}_n\ast\tilde{\gamma}_{n,p_n})\ast(\tilde{\gamma}_n\ast\tilde{\gamma}_{n,p_n})\ast\cdots.$ Note that the combinatorics $\gamma_n(f)$ is periodic of period $p_n$ and that $\gamma_n(f)\to \gamma(f)$ when $n\to\infty.$ The Rauzy-Veech cocycle associetade to $\gamma_n(f)$ will be denoted by $\tilde{\Theta}.$
By Lemmas \ref{Ec} and \ref{inv} we have that for all $s\geq0$ the subspace $E^c_{s,p_n}$ is the graph of $\Psi_{s,p_n}$ and $\tilde{\Theta}_s(E^c_{s,p_n})=E^c_{s+1,p_n}.$ By Proposition \ref{central.bounded} the sequence $\{\Psi_{0,p_n}\}_{n\in\N}$ is equicontinuous and uniformly bounded, so it admits a  subsequence $\{\Psi_{0,p_n}\}_{n\in\N_0}$ that uniformly converges. The same holds for  $\{\Psi_{1,p_n}\}_{n\in\N_0},$ i.e. we can find a infinite subset $\N_1\subset\N_0$ such that $\{\Psi_{1,p_n}\}_{n\in\N_1}$ is uniformly converge. Proceeding analogously for each  $j\in\N$ we can find a infinite subset  $\N_j\subset\N,$ such that $\N_0\supset\N_1\supset\cdots\supset\N_j\supset\cdots$ and $\{\Psi_{j,p_n}\}_{n\in\N_j}$ uniformly converge. Now define the infinite set $\tilde{\N}\subset \N$ taking as your $j$-th element the $j$-th element of $\N_j.$ Define the subspace $E^c_{j,\infty}$ as the graph of
$$\Psi_{j,\infty}=\lim_{n\to\infty\atop{n\in\tilde{\N}}}\Psi_{j,p_n}.$$
\index{$\Psi_{j,\infty}$}
By construction we have that $\dim E^c_{j,\infty}=d-2.$ The next easy Lemma show that the subspaces $E^c_{j,\infty}$ are invariant by the Rauzy-Veech cocycle.
\begin{lemma}
For all $j\geq0,$ we have that $\Theta_j(E^c_{j,\infty})=E^c_{j+1,\infty}.$
\end{lemma}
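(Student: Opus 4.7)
The plan is to pass to the limit in the invariance of $E^c_{s,p_n}$ established, for periodic combinatorics, in Lemma \ref{inv}. By the construction of $\gamma_n(f)$, the first $n$ steps coincide with the original combinatorics: $(\tilde\pi^i,\tilde\ve^i)=(\pi^i,\ve^i)$ for $i\leq n$, so $\tilde\Theta_i=\Theta_i$ for these indices. Fix $j\geq 0$. For every $n\in\tilde\N$ with $n\geq j+1$, Lemma \ref{inv} applied to the $p_n$-periodic combinatorics $\gamma_n(f)$ yields
$$\Theta_j\bigl(E^c_{j,p_n}\bigr)=E^c_{j+1,p_n}.$$

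Next I would exploit the graph structure of these subspaces. Any $v\in E^c_{j,\infty}$ has the form $v=k+\Psi_{j,\infty}(k)$ with $k\in\ker\Omega_{\pi^j}$. Split $\Theta_j(k)=k'+w$ with $k'\in\ker\Omega_{\pi^{j+1}}$ and $w\in\im\Omega_{\pi^{j+1}}$; the pair $(k',w)$ depends only on $k$ and $\Theta_j$, \emph{not} on $n$. Since $\Theta_j(\im\Omega_{\pi^j})\subset\im\Omega_{\pi^{j+1}}$, the vector $\Theta_j(\Psi_{j,p_n}(k))$ lies in $\im\Omega_{\pi^{j+1}}$, so the kernel-image decomposition of $\Theta_j(k+\Psi_{j,p_n}(k))$ reads $k'+\bigl(w+\Theta_j(\Psi_{j,p_n}(k))\bigr)$. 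Combined with the periodic invariance above and the definition of $E^c_{j+1,p_n}$ as a graph over $\ker\Omega_{\pi^{j+1}}$, this forces
$$w+\Theta_j\bigl(\Psi_{j,p_n}(k)\bigr)=\Psi_{j+1,p_n}(k').$$

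Finally I would let $n\to\infty$ along $\tilde\N$. By the diagonal construction of $\tilde\N$, both $\Psi_{j,p_n}\to\Psi_{j,\infty}$ and $\Psi_{j+1,p_n}\to\Psi_{j+1,\infty}$ converge uniformly, so the identity above passes to the limit and gives $\Theta_j(v)=k'+\Psi_{j+1,\infty}(k')\in E^c_{j+1,\infty}$. This proves $\Theta_j(E^c_{j,\infty})\subset E^c_{j+1,\infty}$; equality then follows from invertibility of $\Theta_j$ together with the dimension equality $\dim E^c_{j,\infty}=\dim E^c_{j+1,\infty}=d-2$.

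The only subtle point is the observation that the splitting $\Theta_j(k)=k'+w$ is intrinsic to $\Theta_j$ and the fixed kernel element $k$, and hence survives the limit in $n$; everything else reduces to linearity and to the equicontinuity and uniform boundedness already provided by Proposition \ref{central.bounded}.
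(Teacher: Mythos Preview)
Your proof is correct and follows the same strategy as the paper: apply Lemma~\ref{inv} to the periodic approximations $\gamma_n(f)$ and pass to the limit along $\tilde{\N}$. The paper's argument is terser---it simply commutes $\Theta_j$ with the limit of graphs---whereas you make this passage rigorous by tracking the $\ker\Omega_{\pi^{j+1}}\oplus\im\Omega_{\pi^{j+1}}$ decomposition of $\Theta_j(k+\Psi_{j,p_n}(k))$ explicitly, but the content is the same.
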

\dem
Denote by $n_j\in\N$ the $j-$th element of $\tilde{\N}.$
\begin{eqnarray*}
\Theta_j(E^c_{j,\infty})&=&\Theta_j (\lim_{n\to\infty\atop{n\in\tilde{\N}}}\mathrm{graph}(\Psi_{j,p_n}))\\
& = &\lim_{n\to\infty\atop{n\geq n_j}}\Theta_j( \mathrm{graph}(\Psi_{j,p_n}))\\
& = & \lim_{n\to\infty\atop{n\geq n_j}}  \mathrm{graph}(\Psi_{j+1,p_n}), \text{ by Lemma \ref{inv}}\\
& = & E^c_{j+1,\infty}.
\end{eqnarray*}
\cqd
\begin{pro}[Quasi-isometry in the central direction]
\label{quasi.iso}
For all vector $v\in E^c_{0,\infty}$ and for all $n\geq0$, there is  $C_{6}>1$ such that
$$\frac{1}{C_{6}}\cdot\|v\|\leq \|\Theta_{0,n}v\| \leq C_{6}\cdot \|v\|,$$
where $\Theta_{0,n}=\Theta_n\Theta_{n-1}\cdots\Theta_0.$
\end{pro}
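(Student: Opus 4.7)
The plan is to parameterize $E^c_{j,\infty}$ by $\ker\Omega_{\pi^j}$ via the graph of $\Psi_{j,\infty}$ (whose slope is bounded by $1/\epsilon_0$), reduce the quasi-isometry on $E^c$ to a uniform bi-Lipschitz estimate on the induced kernel-to-kernel action of the cocycle, and prove that estimate via the periodic approximants $\gamma_m(f)$ together with the dual kernel isometry supplied by Lemma \ref{iso.ker}.

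First, I would write $v\in E^c_{0,\infty}$ as $v = v_k + \Psi_{0,\infty}(v_k)$ with $v_k$ the orthogonal projection of $v$ onto $\ker\Omega_{\pi^0}$. The bound $\|\Psi_{j,\infty}\|\leq 1/\epsilon_0$, inherited as a pointwise limit of $\Psi_{j,p_m}$ from Proposition \ref{central.bounded}, gives $\|v_k\|\leq\|v\|\leq (1+1/\epsilon_0)\|v_k\|$, and analogously $\Theta_{0,n}(v)\in E^c_{n,\infty}$ decomposes as $w_k + \Psi_{n,\infty}(w_k)$ with the same estimates on $w_k$. By \eqref{semiconj} the cocycle maps $\im\Omega_{\pi^0}$ into $\im\Omega_{\pi^n}$, so the projection $P_{\ker\Omega_{\pi^n}}$ annihilates $\Theta_{0,n}(\Psi_{0,\infty}(v_k))$ and $w_k = P_{\ker\Omega_{\pi^n}}\Theta_{0,n}(v_k)$. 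Iterating \eqref{semiconj} also gives $\Omega_{\pi^n} = \Theta_{0,n}\Omega_{\pi^0}\Theta_{0,n}^t$, from which $\Theta_{0,n}^t$ restricts to a linear isomorphism $\ker\Omega_{\pi^n}\to\ker\Omega_{\pi^0}$; the map $L_n\colon v_k\mapsto w_k$ is then its Euclidean adjoint, so it suffices to bound the norm and conorm of $\Theta_{0,n}^t|_{\ker\Omega_{\pi^n}}$ uniformly in $n$.

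For the uniform bound I would pass to the periodic approximation $\gamma_m(f)$. Fixing $n$ and taking $m\in\tilde\N$ with $m>n$, by construction $\tilde\Theta_{0,n-1} = \Theta_{0,n-1}$, and in the periodic regime of period $p_m$ the identity $\tilde\Theta_{0,p_m-1}^t|_{\ker\Omega_{\pi^0}} = \mathrm{Id}$ already used in the setup of Lemma \ref{Ec} factors as $\tilde\Theta_{0,n-1}^t|_{\ker}\circ\tilde\Theta_{n,p_m-1}^t|_{\ker} = \mathrm{Id}$, so the two restrictions are mutually inverse. The proof of Lemma \ref{iso.ker} uses only that $\Theta^t$ bijects consecutive kernels, hence extends to arbitrary block lengths and shows that the natural dual kernel norm is preserved step by step; combined with the finiteness of $\Pi^1$, which confines the possible kernel subspaces and their dual norms to a finite family uniformly equivalent to the Euclidean norm, this yields a uniform bound on $\|\tilde\Theta_{0,n-1}^t|_{\ker}\|$ and its inverse independent of $n$ and $m$. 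Taking $m\to\infty$ along $\tilde\N$ transfers the bound to $L_n$ via the convergence $\Psi_{j,p_m}\to\Psi_{j,\infty}$, and combining with the first step produces the required $C_6>1$.

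The main obstacle will be the uniform intermediate bound: the step-by-step isometric action on the kernel extracted from Lemma \ref{iso.ker} is soft, but converting it into a single constant valid for \emph{every} truncation of the period requires carefully identifying the naturally occurring dual kernel norms with the Euclidean norm, uniformly across the finite set $\Pi^1$. Without this identification one would only control full-period compositions, leaving the intermediate norms potentially unbounded even though the overall periodic return is the identity on the central subspace.
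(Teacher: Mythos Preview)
Your reduction in the first paragraph is correct and is exactly how the paper proceeds: write $v=v_k+\Psi_{0,\infty}(v_k)$, use the uniform bound $\|\Psi_{j,\infty}\|\le 1/\epsilon_0$, observe that $\Theta_{0,n}$ preserves $\im\Omega$, and conclude that everything comes down to controlling the norm and conorm of $\Theta_{0,n}^t|_{\ker\Omega_{\pi^{n}}}$ (equivalently, of the kernel-to-kernel map $L_n$ you describe).

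The gap is in your second paragraph. The periodic approximation is an unnecessary detour and does not do the work you assign to it: knowing that $\tilde\Theta_{0,n-1}^t|_{\ker}$ and $\tilde\Theta_{n,p_m-1}^t|_{\ker}$ are mutual inverses gives no control on either factor separately, as you yourself note. Your actual engine is the claim that Lemma~\ref{iso.ker} ``uses only that $\Theta^t$ bijects consecutive kernels'' and therefore yields a dual-kernel-norm isometry over arbitrary blocks. That inference is wrong. If you unwind the dual kernel norm, the purported isometry says $\|P_{\ker\Omega_{\pi'}}\Theta\,x_k\|=\|x_k\|$ for $x_k\in\ker\Omega_\pi$, which is equivalent to $\Theta^t|_{\ker\Omega_{\pi'}}$ being a Euclidean isometry. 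Mere bijection of the kernels gives nothing of the sort: the sup defining $\|\Theta x_k\|_{\ker'}$ is over $k'$ in the \emph{unit ball} of $\ker'$, and $\Theta^t$ need not carry that ball to the unit ball of $\ker$. So the step-by-step preservation you invoke is unproven, and no amount of finiteness of $\Pi^1$ repairs this, because the number of compositions is unbounded.

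The paper bypasses this entirely with a concrete combinatorial fact you do not use: by \cite[Lemma~2.16]{viana}, ${}^t\Theta_{0,n}$ sends the canonical basis of $\ker\Omega_{\pi^{n}}$ (whose vectors have entries in $\{-1,0,1\}$) to the canonical basis of $\ker\Omega_{\pi^{0}}$ (same property). Since there are only finitely many such bases as $\pi$ ranges over $\Pi^1$, this immediately gives a uniform bound $\|{}^t\Theta_{0,n}|_{\ker}\|\le C_7$ and the same for the inverse, with no periodic approximation and no appeal to an isometry. Replacing your second paragraph with this single input makes your argument go through and match the paper's.
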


\dem
For all $v\in E^c_{0,\infty},$ there is $k\in\ker\ \Omega_{\pi^0}$ such that $v=k+\Psi_{0,\infty}(k).$ By continuity of inner product we have that $k\perp\Psi_{0,\infty}(k).$ Then
$$\|k\|\leq \sqrt{\|k\|^2+\|\Psi_{0,\infty}(k)\|^2}=\|v\|\leq \|k\|+\|\Psi_{0,\infty}(k)\|\leq \|k\|+\frac{1}{\epsilon_0}\|k\|,$$
by Proposition \ref{central.bounded}. Therefore
\begin{eqnarray}
\label{Lc1}
\|v\|_{ker \ \Omega_{\pi^0}}:=\|k\|\leq \|v\|\leq \left(1+\frac{1}{\epsilon_0}\right)\cdot \|k\|.
\end{eqnarray}
 Let $\mathcal{C}:=\{u=(u_\al)_{\al\in\A}\in\R^{\A} : u_\al=-1,0,\text{ or }1\}.$ Then
$$\max_{u\in\mathcal{C}}\|u\|\leq \sqrt{d}=:C_{7}.$$
We know  that ${}^t\Theta_{0,n}:\ker\ \Omega_{\pi^n}\to\ker\ \Omega_{\pi^0}$  maps the  basis of $\ker\ \Omega_{\pi^n}$ to  the basis of $\ker\ \Omega_{\pi^0},$ by \cite[Lemma 2.16]{viana}.

For all $\pi\in\Pi^g$ and for all $k\in\ker\ \Omega_{\pi}$ we can put $k=\sum_{i=1}^da_iu_i,$ where $a_i\in\R$ and $u_i$ belongs to the basis of $\ker\ \Omega_{\pi}$, in particular  $u_i \in\mathcal{C}$ (see \cite{viana}). Using the sum norm we have that

\begin{eqnarray*}
\|{}^t\Theta_{0,n}k\|&=&\|\sum_{i=1}^d a_i{}^t\Theta_{0,n}u_i\|\\
& = & \|\sum_{i=1}^d a_i\tilde{u}_i\|, \text{ where } \tilde{u}_i\in\mathcal{C}\\
&\leq & \sum_{i=1}^d |a_i|\cdot\|\tilde{u}_i\|\\
&\leq& C_{7}\cdot \|k\|.
\end{eqnarray*}
Therefore
\begin{eqnarray}
\label{c11}
\|{}^t\Theta_{0,n}\|_{\ker\ \Omega_{\pi^n}}\leq C_{7}.
\end{eqnarray}
Then
\begin{eqnarray}
\label{Lc2}
\|\Theta_{0,n}v\|_{\ker\ \Omega_{\pi^n}}&=&\sup_{k\in\ker\ \Omega_{\pi^n}\atop{\|k\|\leq1}}<\Theta_{0,n}v,k>\nonumber\\
&=& \sup_{k\in\ker\ \Omega_{\pi^n}\atop{\|k\|\leq1}}<v,{}^t\Theta_{0,n}k>\nonumber\\
&\leq&\|v\|\cdot \|{}^t\Theta_{0,n}\|_{\ker\ \Omega_{\pi^n}}\|k\|\nonumber\\
&\leq& C_{7}\cdot \|v\|, \text{ due }\eqref{c11}.
\end{eqnarray}
Now
\begin{eqnarray}
\label{menor}
\|\Theta_{0,n}v\|&\leq & \left(1+\frac{1}{\epsilon_0}\right)\|\Theta_{0,n}v\|_{\ker\ \Omega_{\pi^n}}, \text{ due } \eqref{Lc1}\nonumber\\
&\leq& \left(1+\frac{1}{\epsilon_0}\right)\cdot C_{7}\cdot \|v\|\text{, due } \eqref{Lc2}.
\end{eqnarray}

Now we can find the lower estimate for  $\|\Theta_{0,n}v\|.$  First note that

\begin{eqnarray}
\label{maior}
\|\Theta_{0,n}v\|_{\ker\ \Omega_{\pi^n}}&=&\sup_{k\in\ker\ \Omega_{\pi^n}\atop{\|k\|\leq1}}<v,{}^t\Theta_{0,n}k>\nonumber\\
&\geq & \sup_{\tilde{k}\in\ker\ \Omega_{\pi^0}\atop{\|\tilde{k}\|\leq\frac{1}{C_7}}}<v,\tilde{k}>\nonumber\\
&=&\frac{1}{C_{7}}\cdot \sup_{\tilde{k}\in\ker\ \Omega_{\pi^0}\atop{\|\tilde{k}\|\leq1}}<v,\tilde{k}>\nonumber\\
&=&\frac{1}{C_{7}}\cdot\|v\|_{\ker\ \Omega_{\pi^0}}.
\end{eqnarray}

Using \eqref{Lc1} we have that
$$\|v\|\leq \left(1+\frac{1}{\ve_0}\right)\cdot \|v\|_{\ker\ \Omega_{\pi^0}} \Rightarrow \left(1+\frac{1}{\ve_0}\right)^{-1}\|v\|\leq \|v\|_{\ker\ \Omega_{\pi^0}}.$$

This estimate jointly with \eqref{maior} and \eqref{menor}, yields

$$\frac{1}{C_{7}}\left(1+\frac{1}{\epsilon_0}\right)^{-1}\|v\|\leq \frac{1}{C_{7}}\|v\|_{\ker\ \Omega_{\pi^0}}\leq \|\Theta_{0,n}v\|_{\ker\ \Omega_{\pi^n}}\leq \|\Theta_{0,n}v\| \leq C_{7}\left(1+\frac{1}{\epsilon_0}\right)\|v\|.$$

Taking $C_{6}= C_{7}\left(1+\frac{1}{\epsilon_0}\right)$ we have the result.
\cqd

\section{Proof of Theorems \ref{teo1} and \ref{teo2}}


Let $f:[0,1)\to[0,1)$ be a g.i.e.m.. For simplicity we write $R^n(f)(x)=f_n(x)=f^{q_n^{\al}}(x)$ if $x\in I^n_\al.$ Define  $L^n=(L_{\al}^n)_{\al\in\A}$ by

\begin{eqnarray}
\label{mean-deriv}
L_{\al}^n=\frac{1}{|I^n_{\al}|}\int_{I^n_{\al}}\ln Df_n(s)ds=\frac{1}{|I^n_{\al}|}\int_{I^n_{\al}}\ln Df^{q^{\al}_n}(s)ds.
\end{eqnarray}
Note that if $f$ is a affine i.e.m. then $L_{\al}^n=\omega^n_{\al}$ for all $\al\in\A.$ The following proposition gives a relationship between $L^n$ and $L^{n+1},$ more precisely we prove that  $L^n$ is an asymptotic pseudo-orbit for the Kontsevich-Zorich cocycle.

\begin{pro}
\label{L}
Let $f\in\mathcal{B}_{k}^{2+\nu}$ with $\int_0^1D^2(f)(s)/Df(s)ds=0.$ Then
\begin{eqnarray}
\label{formula}
L^{n+1}=\Theta_nL^n+\vec{\epsilon_n},
\end{eqnarray}
where $\|\vec{\epsilon_n}\|=\O(\la^{\sqrt{{n}}}),$ $0<\la<1.$
\end{pro}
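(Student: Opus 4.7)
The plan is to derive the formula (\ref{formula}) by directly computing $L^{n+1}_\alpha$ from the definition and comparing to $(\Theta_n L^n)_\alpha$, reducing the whole problem to bounding the oscillation of $\ln Df_n$ on each interval $I_\alpha^n$.

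By construction of the Rauzy--Veech induction, the dynamics of $f_{n+1}$ is inherited from $f_n$ in two different ways. For $\alpha\neq \alpha^n(1-\ve^n)$ one has $I_\alpha^{n+1}\subseteq I_\alpha^n$ and $f_{n+1}|_{I_\alpha^{n+1}}=f_n|_{I_\alpha^{n+1}}$, while on the loser interval $f_{n+1}=f_n\circ f_n$ and $f_n(I_{\alpha^n(1-\ve^n)}^{n+1})\subseteq I_{\alpha^n(\ve^n)}^n$. In the first case
$$ L_\alpha^{n+1}-L_\alpha^n=\frac{1}{|I_\alpha^{n+1}|}\int_{I_\alpha^{n+1}}\ln Df_n\,ds-\frac{1}{|I_\alpha^n|}\int_{I_\alpha^n}\ln Df_n\,ds, $$
which is bounded by the oscillation of $\ln Df_n$ on $I_\alpha^n$. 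In the second case, using $\ln Df_{n+1}=\ln Df_n+(\ln Df_n)\circ f_n$ on $I_{\alpha^n(1-\ve^n)}^{n+1}$ and the change of variable $u=f_n(s)$, the integral splits into the sum of two averages of $\ln Df_n$, one over a subinterval of $I_{\alpha^n(1-\ve^n)}^n$ and one over a subinterval of $I_{\alpha^n(\ve^n)}^n$, weighted by $(Df_n)^{-1}$ pulled back to the second interval. After comparison with $L_{\alpha^n(1-\ve^n)}^n+L_{\alpha^n(\ve^n)}^n=(\Theta_n L^n)_{\alpha^n(1-\ve^n)}$, the discrepancy is again controlled by oscillations of $\ln Df_n$ over $I_{\alpha^n(1-\ve^n)}^n$ and $I_{\alpha^n(\ve^n)}^n$, plus the deviation of the Jacobian factor $1/Df_n(f_n^{-1}(u))$ from a constant.

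The problem thus reduces to proving $\max_\alpha \operatorname{osc}(\ln Df_n, I_\alpha^n)=\O(\lambda^{\sqrt n})$. This is precisely where the hypotheses of the proposition are used: the $k$-bounded combinatorics, the $C^{2+\nu}$ regularity and the vanishing mean nonlinearity feed into the convergence results of \cite{cunhasmania}, which say that each zoom $Z_{I_\alpha^n}f_n$ is $\O(\lambda^{\sqrt n})$-close in $C^2$ to an affine map. Affine maps have $\ln D(\cdot)$ constant, so rescaling back from the zoom yields the required pointwise closeness of $\ln Df_n$ to a constant on $I_\alpha^n$, and in particular the oscillation estimate.

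The main obstacle is to make sure the distortion-type correction factors (the weight $1/Df_n(f_n^{-1}(u))$ from the change of variables, and the difference of averages over nested intervals) are all controlled by the same rate $\lambda^{\sqrt n}$, rather than by a weaker exponential or polynomial bound. This forces us to exploit the sharp $C^2$ convergence from \cite{cunhasmania} (not just $C^0$ or $C^1$), because the zero mean nonlinearity assumption is what kills the fractional linear drift and leaves only the affine piece to which $\ln Df_n$ converges at the subexponential rate $\lambda^{\sqrt n}$. Once this is done, grouping all error terms into the vector $\vec{\epsilon_n}$ yields the statement.
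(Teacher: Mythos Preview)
Your approach is essentially the same as the paper's: both split into the non-loser case (where $f_{n+1}=f_n$ on a subinterval) and the loser case (where $f_{n+1}=f_n\circ f_n$), then invoke the oscillation bound $\max_\alpha\operatorname{osc}(\ln Df_n,I^n_\alpha)=\O(\lambda^{\sqrt n})$ coming from Theorem~3 of \cite{cunhasmania}. The only real difference is tactical: the paper applies the Mean Value Theorem to write each $L^n_\alpha=\ln Df_n(x^n_\alpha)$ at a single point and then compares point evaluations, whereas you keep the integral averages and pass through a change of variable $u=f_n(s)$. The paper's route is cleaner precisely because it avoids the Jacobian factor $1/Df_n(f_n^{-1}(u))$ that you correctly flag as the ``main obstacle''; once you reduce to point evaluations the chain rule gives the splitting directly with no weight to control.

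One slip to fix: your inclusion $f_n(I^{n+1}_{\alpha^n(1-\ve^n)})\subseteq I^n_{\alpha^n(\ve^n)}$ is only correct for type~0. In type~1 the loser's new interval sits inside the \emph{winner's} old interval, $I^{n+1}_{\alpha^n(0)}\subset I^n_{\alpha^n(1)}$, and $f_n$ maps it onto $I^n_{\alpha^n(0)}$. So the two applications of $f_n$ pass through the winner's interval first and then the loser's, rather than the reverse. This does not affect the outcome --- in both types $\ln Df_{n+1}$ on the loser decomposes as one contribution from $I^n_{\alpha^n(\ve^n)}$ plus one from $I^n_{\alpha^n(1-\ve^n)}$, so you still get $(\Theta_nL^n)_{\alpha^n(1-\ve^n)}=L^n_{\alpha^n(\ve^n)}+L^n_{\alpha^n(1-\ve^n)}$ up to oscillation --- but you should state the type~1 case separately or phrase the inclusion symmetrically.
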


\dem Denote by $x^n_{\al}\in I^n_{\al}$ the point such that $L_{\al}^n=\ln Df^{q^n_{\al}}(x^n_{\al}),$ for all $n\geq0.$

\begin{itemize}
\item If $\al\not=\al^{n}(\varepsilon), \al^{n}(1-\varepsilon)$ then clearly $L^{n+1}_{\al}=L_{\al}^n.$\\

\item If $\al=\al^{n}(\varepsilon)$ then $q^n_{\al^n(\varepsilon)}=q^{n+1}_{\al^n(\varepsilon)}$ and therefore

\begin{eqnarray*}
L^{n+1}_{\al}&=& \ln Df^{q^n_{\al}}(x^{n+1}_{\al})\\
& = &  \ln Df^{q^n_{\al}}(x^{n+1}_{\al}) + \ln Df^{q^n_{\al}}(x^{n}_{\al}) -\ln Df^{q^n_{\al}}(x^{n}_{\al})\\
& = & L^{n}_{\al} +\ln Df^{q^n_{\al}}(x^{n+1}_{\al}) - \ln Df^{q^n_{\al}}(x^{n}_{\al})\\
& = & L_{\al}^n + \O(\la^{\sqrt{n}}),
\end{eqnarray*}
by Theorem 3 of \cite{cunhasmania}.\\
\item If $\al=\al^{n}(1-\varepsilon)$ then $q^{n+1}_{\al^n(1-\varepsilon)}=q^{n}_{\al^n(1-\varepsilon)}+q^{n}_{\al^nn(\varepsilon)}.$ Note also that $f^{q^{n}_{\al(1-\varepsilon)}}(x^n_{\al^nn(1-\varepsilon)})\in I^n_{\al^n(\varepsilon)}.$ Therefore

\begin{eqnarray*}
L^{n+1}_{\al^n(1-\varepsilon)}&=& \ln Df^{q^{n+1}_{\al^n(1-\varepsilon)}}(x^{n+1}_{\al^n(1-\varepsilon)})\\
& = &  \ln Df^{q^n_{\al^n(\varepsilon)}}(f^{q^n_{\al^n(1-\varepsilon)}}(x^{n+1}_{\al^n(1-\varepsilon)})) + \ln Df^{q^n_{\al^n(1-\varepsilon)}}(x^{n+1}_{\al^n(1-\varepsilon)})\\
& = &  \ln Df^{q^n_{\al^n(\varepsilon)}}(f^{q^n_{\al^n(1-\varepsilon)}}(x^{n+1}_{\al^n(1-\varepsilon)})) - \ln Df^{q^n_{\al^n(\varepsilon)}}(x^{n}_{\al^n(\varepsilon)})+ L_{\al^n(\varepsilon)}^n\\
&  & + \ln Df^{q^n_{\al^n(1-\varepsilon)}}(x^{n+1}_{\al^n(1-\varepsilon)}) - \ln Df^{q^n_{\al^n(1-\varepsilon)}}(x^{n}_{\al^n(1-\varepsilon)})+L_{\al^n(1-\varepsilon)}^n \\
& = &  L_{\al^n(\varepsilon)}^n+ L_{\al^n(1-\varepsilon)}^n  + \O(\la^{\sqrt{n}}),
\end{eqnarray*}
by Theorem 3 of \cite{cunhasmania}.
\end{itemize}

This finishes the proof.

\cqd

Now we decompose the vector $L_n=(L_n)_{\al\in\A}$ as

$$L_n=L_n^s+ L_n^c + L_n^u\in E^s_n \oplus E^c_{n,\infty}\oplus E^u_n.$$

\begin{lemma}
\label{Lstable}
The sequence $\{L_n^s\}_{n\in\N}$ satisfies
$||L_n^s||=O(\lambda^{\sqrt{n}}).$ \end{lemma}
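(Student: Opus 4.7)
The plan is to iterate the recursion of Proposition \ref{L}, project onto the invariant stable subspace $E^s_n$, and exploit the exponential contraction of $\Theta$ on $E^s$ given by Proposition \ref{hyp}(b). Unrolling $L^{n+1}=\Theta_nL^n+\vec\epsilon_n$ yields the telescoping identity
\[ L^n = \Theta_{0,n-1}\,L^0 + \sum_{j=0}^{n-1} \Theta_{j+1,n-1}\,\vec\epsilon_j,\]
with the convention $\Theta_{n,n-1}=\mathrm{Id}$. Let $\Pi^s_j,\Pi^c_j,\Pi^u_j$ be the projections associated with the splitting $\R^{\A}=E^s_j\oplus E^c_{j,\infty}\oplus E^u_j$. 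Since $\Theta_j$ maps each summand at level $j$ onto the corresponding summand at level $j+1$, the projections intertwine with the cocycle, and the stable component satisfies
\[ L^n_s = \Theta_{0,n-1}\bigl(\Pi^s_0 L^0\bigr) + \sum_{j=0}^{n-1} \Theta_{j+1,n-1}\bigl(\Pi^s_{j+1}\vec\epsilon_j\bigr).\]

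Proposition \ref{hyp}(b), translated to forward iteration on $E^s$, gives $\|\Theta_{j,n-1}|_{E^s_j}\|\leq C\mu^{-(n-j)}$. To conclude a bound on $L^n_s$ I also need the projections $\Pi^s_j$ to be uniformly bounded in $j$. This should follow from three ingredients: (i) the combinatorial data $\pi^j$ lives in the finite set $\Pi^1$; (ii) within $\im\ \Omega_{\pi^j}$ the cones $C^u_{\pi^j}$ and $C^s_{\pi^j}$ are uniformly transverse, by the $k$-bounded strict cone invariance from Proposition \ref{hyp} combined with the finiteness of $\Pi^1$; (iii) the estimate $\|\Psi_{0,p_n}\|\leq 1/\epsilon_0$ of Proposition \ref{central.bounded} passes to the limit $\Psi_{j,\infty}$, forcing $E^c_{j,\infty}$ to make a uniformly bounded angle with $\ker\ \Omega_{\pi^j}$. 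Together with the orthogonality of $\ker\ \Omega_{\pi^j}$ and $\im\ \Omega_{\pi^j}$ (antisymmetry) and with \eqref{empker}, this yields a uniform direct-sum decomposition, hence $\sup_j \|\Pi^s_j\|<\infty$.

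Combining the estimates gives
\[ \|L^n_s\|\leq C\mu^{-n}\|L^0\| + C'\sum_{j=0}^{n-1}\mu^{-(n-j-1)}\lambda^{\sqrt{j}}.\]
To conclude, I split the sum at $j_\ast=n-\lceil\sqrt{n}\rceil$. For $j\geq j_\ast$ the tail is bounded by $\sqrt{n}\cdot\lambda^{\sqrt{j_\ast}}$ using $\mu^{-(n-j-1)}\leq 1$, while for $j< j_\ast$ the head is bounded by $n\cdot\mu^{-\sqrt{n}}$ using $\lambda^{\sqrt{j}}\leq 1$; both are $\O(\tilde\lambda^{\sqrt{n}})$ for a suitable $\tilde\lambda\in(0,1)$, and the deterministic term $C\mu^{-n}\|L^0\|$ is absorbed.

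The main obstacle is step two, namely the uniform boundedness of $\Pi^s_j$. Since $E^c_{j,\infty}$ is defined via a diagonal subsequence limit depending on the full forward combinatorics, pure finiteness of the Rauzy class is not enough; the crucial input is that the bound in Proposition \ref{central.bounded} is uniform in the period $p_n$ and therefore survives the limit, which is precisely what produces a uniform angle between $E^c_{j,\infty}$ and $\ker\ \Omega_{\pi^j}$ for every $j$.
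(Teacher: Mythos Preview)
Your proof is correct and follows essentially the same strategy as the paper: project the recursion $L^{n+1}=\Theta_nL^n+\vec\epsilon_n$ onto the stable direction, invoke the exponential contraction from Proposition~\ref{hyp}, and estimate the resulting convolution sum $\sum_j\mu^{-(n-j)}\lambda^{\sqrt{j}}$ by splitting it into two ranges. The paper packages the contraction via an adapted norm (citing Shub) rather than unrolling the full telescoping identity, and it splits the sum at a fixed threshold $n_0$ rather than at $n-\sqrt n$, but these are cosmetic differences; if anything, your explicit argument for the uniform boundedness of the projections $\Pi^s_j$ fills in a step the paper leaves implicit when passing from $\|\vec\epsilon_n\|=O(\lambda^{\sqrt n})$ to a bound on its stable component.
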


\dem
By Proposition \ref{hyp} we have for all $j,n\geq 0$ and for all $v\in E^s_j$ that
$$\|\Theta_{j+n-1}\cdot \Theta_{j+n-2}\cdots\Theta_{j}v\|\leq \frac{1}{C_2\cdot\mu^n}\|v\|.$$
Replacing this norm by the {\it adapted norm}, see \cite[Proposition 4.2]{shub}, that we still denote by $\|\cdot\|$ for simplicity, we can find $\mu>\tilde{\mu}>1$ such that for all $n\geq 0$ and for all $v\in E^s_n$ we have
$$\|\Theta_nv\|\leq\frac{1}{\tilde\mu}\|v\|.$$
By Proposition \ref{L} we have
$$\|L_n^s\|\leq \frac{1}{\tilde\mu}\|L_{n-1}^s\|+C\cdot \la^{\sqrt{n-1}}.$$
Applying this estimative n times we obtain
\begin{equation}\label{e1234} |L_n^s\|\leq \frac{1}{\tilde\mu^n}\cdot\|L_0^s\|+C\cdot \sum_{i=0}^{n-1}\frac{1}{\tilde\mu^i}\la^{\sqrt{n-i-1}}.\end{equation}
Note that
\begin{eqnarray}
\label{p0}
\frac{1}{\tilde\mu^n}\cdot\|L_0^s\|\to 0\; \text{ when } n\to\infty.
\end{eqnarray}
Now we analyse the last part of the \eqref{e1234}. Denote $$a_{n,i}=\frac{1}{\tilde\mu^i}\cdot \la^{\sqrt{n-i+1}}.$$ Then
\begin{eqnarray}
\label{razao}
\frac{a_{n,i+1}}{a_{n,i}}=\frac{1}{\tilde\mu}\cdot\la^{\sqrt{n-i-2}-\sqrt{n-i-1}}\leq \frac{1}{\tilde\mu}\cdot \la^{\frac{-1}{2\sqrt{n-i-2}}},
\end{eqnarray}
where the inequality above is given by Mean Value Theorem. Let $n_0\in\N$ be such that if $n-i-2\geq n_0$ then
$$\la^{\frac{-1}{2\sqrt{n-i-2}}}\leq \frac{1}{\tilde\mu}.$$
So

$$\sum_{i=0}^{n-1}\frac{1}{\tilde\mu^i}\la^{\sqrt{n-i-1}}=\sum_{i=0}^{n-n_0-2}\frac{1}{\tilde\mu^i}\la^{\sqrt{n-i-1}}+\sum_{i=n-n_0-1}^{n-1}\frac{1}{\tilde\mu^i}\la^{\sqrt{n-i-1}}.$$

The first sum is estimated by

\begin{eqnarray}
\label{p1}
\sum_{i=0}^{n-n_0-2}\frac{1}{\tilde\mu^i}\la^{\sqrt{n-i-1}}&=&\sum_{i=0}^{n-n_0-2} a_{n,i}\nonumber\\
&\leq& \sum_{i=0}^{n-n_0-2} \frac{1}{\tilde\mu^{2i}}\la^{\sqrt{n}} \nonumber\\
& \leq &\la^{\sqrt{n}}\cdot\sum_{i=0}^{n-n_0-2}\frac{1}{\tilde\mu^{2i}}, \text{ por \eqref{razao}}\nonumber\\
&\leq & \frac{1}{1-\tilde\mu^{-2}}\cdot \la^{\sqrt{n}}
\end{eqnarray}
and the second by
\begin{eqnarray}
\label{p2}
\sum_{i=n-n_0-1}^{n-1}\frac{1}{\tilde\mu^i}\la^{\sqrt{n-i-1}}&\leq&\sum_{i=n-n_0-1}^{n-1}\frac{1}{\tilde\mu^i}\nonumber\\
&\leq& \frac{1}{\tilde\mu^n}\sum_{i=n-n_0-1}^{n-1}\frac{1}{\tilde\mu^{i-n}}\nonumber\\
& \leq & C\cdot \frac{1}{\tilde\mu^n}.
\end{eqnarray}
Of \eqref{p0}, \eqref{p1} e \eqref{p2}, we get the result.

\cqd

\begin{lemma}
\label{Lunstable} There is $\lambda_5 \in [0,1)$ such that
the  sequence $\{L_n^u\}_{n\in\N}$ satisfies
$||L_n^u||=O(\lambda_5^{\sqrt{n}})$ \end{lemma}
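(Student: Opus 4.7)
The plan is to mirror the strategy of Lemma \ref{Lstable}, but run the resulting recursion \emph{backwards in time}, exploiting the fact that the Rauzy--Veech cocycle expands the unstable direction $E^u_n$. First I would apply Proposition \ref{L} and project along the invariant splitting $\mathbb{R}^{\mathcal{A}} = E^s_n \oplus E^c_{n,\infty} \oplus E^u_n$ to obtain
$$L_{n+1}^u \;=\; \Theta_n L_n^u + \epsilon_n^u,$$
where $\epsilon_n^u$ is the $E^u_{n+1}$-component of $\vec\epsilon_n$. Uniform transversality of the three invariant subspaces, which follows from the uniform hyperbolicity of Proposition \ref{hyp} together with the quasi-isometric control on the centre direction in Proposition \ref{quasi.iso}, implies that the spectral projection onto $E^u_{n+1}$ is uniformly bounded, so that $\|\epsilon_n^u\| = \mathrm{O}(\lambda^{\sqrt{n}})$. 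As in the proof of Lemma \ref{Lstable} I would work with the adapted norm, for which $\|\Theta_n^{-1}|_{E^u_{n+1}}\| \le \tilde\mu^{-1}$ for some $\tilde\mu > 1$.

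Since $\Theta_n:E^u_n\to E^u_{n+1}$ is invertible, for every $N>n$ the recursion can be solved backwards:
$$L_n^u \;=\; \Theta_n^{-1}\cdots\Theta_{N-1}^{-1}L_N^u \;-\; \sum_{i=n}^{N-1}\Theta_n^{-1}\cdots\Theta_i^{-1}\epsilon_i^u.$$
To let $N\to\infty$ I need a uniform bound on $\|L_N\|$. This should follow from bounded distortion: by the Denjoy-type estimates used in \cite{cunhasmania}, $L_\alpha^n$ equals $\log\bigl(|f^{q_\alpha^n}(I_\alpha^n)|/|I_\alpha^n|\bigr)$ modulo a bounded error, and bounded combinatorics forces these two lengths to be comparable. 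Consequently
$$\|\Theta_n^{-1}\cdots\Theta_{N-1}^{-1}L_N^u\| \;\le\; \tilde\mu^{-(N-n)}\|L_N^u\| \;\xrightarrow[N\to\infty]{}\; 0,$$
so that in the limit
$$L_n^u \;=\; -\sum_{i=n}^{\infty}\Theta_n^{-1}\cdots\Theta_i^{-1}\epsilon_i^u.$$

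It then remains a routine estimate: since $\|\Theta_n^{-1}\cdots\Theta_i^{-1}|_{E^u_{i+1}}\|\le \tilde\mu^{-(i-n+1)}$ and $\sqrt{i}\ge\sqrt{n}$ for $i\ge n$,
$$\|L_n^u\| \;\le\; C\sum_{i=n}^{\infty}\tilde\mu^{-(i-n+1)}\lambda^{\sqrt{i}} \;\le\; C\,\lambda^{\sqrt{n}}\sum_{j=1}^{\infty}\tilde\mu^{-j} \;=\; \mathrm{O}(\lambda^{\sqrt{n}}),$$
so one may take $\lambda_5 = \lambda$. The main obstacle I foresee is not the summation but rather the two a priori ingredients used above: the uniform boundedness of $\{\|L_n\|\}$ (which depends on combining the Denjoy distortion control of \cite{cunhasmania} with the bounded combinatorics hypothesis), and the uniform boundedness of the spectral projectors associated with the invariant splitting (which requires that the angles between $E^s_n$, $E^c_{n,\infty}$ and $E^u_n$ be bounded away from zero — itself a consequence of the uniform hyperbolicity and quasi-isometry results already established in Section~2). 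Once these are in place, the backward iteration makes the argument parallel to that of Lemma \ref{Lstable}.
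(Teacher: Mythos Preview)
Your proposal is correct and follows essentially the same route as the paper: invert the expanding recursion on $E^u$ to express $L_n^u$ in terms of future data, use the uniform bound on $\|L_N^u\|$ to kill the boundary term, and then sum the geometric-times-$\lambda^{\sqrt{\cdot}}$ tail. The only cosmetic differences are that the paper works with the scalar inequality $\|L_{n+1}^u\|\ge \tilde\mu\,\|L_n^u\| - C\lambda^{\sqrt{n}}$ rather than the vector identity, and truncates at $N=2n$ instead of letting $N\to\infty$; both variants rely on exactly the two a~priori inputs you singled out.
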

\begin{proof}
The proof is similar to Lemma \ref{Lstable} and we use the adapted norm again. For all $n\geq0$ we have that
$$\|L_{n+1}^u\|\geq \tilde\mu\cdot\|L_n^u\|-C\cdot\la_5^{\sqrt{n}}.$$
Applying this estimative $k$ times, we obtain
$$\|L^u_{n+k}\|\geq \tilde\mu^k\|L^u_n\|-C\cdot \sum_{j=0}^{k-1}\tilde\mu^j\cdot\la_5^{\sqrt{n+k-1-j}},$$
and therefore
$$\|L_n^u\|\leq \frac{1}{\tilde\mu^k}\|L_{n+k}^u\|+C\cdot \sum_{j=0}^{k-1}\tilde\mu^{j-k}\cdot\la_5^{\sqrt{n+k-1-j}}.$$
Making $k=n,$ we have
$$\|L_n^u\|\leq \frac{1}{\tilde\mu^n}\|L_{2n}^u\|+C\cdot \sum_{j=0}^{n-1}\tilde\mu^{j-n}\cdot\la_5^{\sqrt{2n-1-j}}.$$
The sequence $\{L^u_{2n}\}_{n\in\N}$ is uniformly bounded, then

\begin{eqnarray}
\label{p0u}
\frac{1}{\tilde\mu^n}\|L_{2n}^u\|\to 0,\; \text{ when } n\to\infty.
\end{eqnarray}
Now note that
$$ \sum_{j=0}^{n-1}\tilde\mu^{j-n}\cdot\la_5^{\sqrt{2n-1-j}}=\sum_{s=0}^{n-1}\tilde\mu^{-1-s}\cdot\la_5^{\sqrt{n+s}}.$$
Let $a_s=\tilde\mu^{-1-s}\cdot\la_5^{\sqrt{n+s}}.$ Then

\begin{eqnarray}
\label{p1u}
\frac{a_{s+1}}{a_s}=\frac{1}{\tilde\mu}\cdot \la_5^{\sqrt{n+s+1}-\sqrt{n+s}}\leq \frac{1}{\tilde\mu}\cdot\la_5^{\frac{1}{2\sqrt{n+s+1}}}.
\end{eqnarray}
By  \eqref{p1u}

\begin{eqnarray}
\label{p2u}
&\sum_{s=0}^{n-1}&\tilde\mu^{-1-s}\cdot\la_5^{\sqrt{n+s}}\nonumber \\&=& \tilde\mu^{-1}\la_5^{\sqrt{n}}+\tilde\mu^{-2}\la_5^{\sqrt{n+1}}+\cdots + \tilde\mu^{-n}\la_5^{\sqrt{2n-1}}\nonumber \\
& \leq & \tilde\mu^{-1}\la_5^{\sqrt{n}}\left(1+\tilde\mu^{-1}\la_5^{\sqrt{n+1}-\sqrt{n}}+\cdots + \tilde\mu^{-n+1}\la_5^{\sqrt{2n-1}-\sqrt{n}}\right)\nonumber\\
&\leq &  \tilde\mu^{-1}\la_5^{\sqrt{n}}\left(1+\tilde\mu^{-1}\la_5^{\frac{1}{2\sqrt{n+1}}}+\ldots + \tilde\mu^{-n+1}\la_5^{\frac{1}{2\sqrt{n+1}}+\ldots + \frac{1}{2\sqrt{2n-1}}}\right)\;\nonumber\\
&\leq& \tilde\mu^{-1}\la_5^{\sqrt{n}}\left(1+\tilde\mu^{-1}+\ldots +\tilde\mu^{-n+1}\right)\nonumber\\
&\leq &  \tilde\mu^{-1}\la_5^{\sqrt{n}}\cdot \frac{1}{1-\tilde\mu^{-1}}.
\end{eqnarray}
Of \eqref{p0u} and \eqref{p2u} we get the result.
\end{proof}

Define for all $n\geq0$ the vector
$$\tilde{\omega}_{n}:=\Theta_0^{-1}\Theta_1^{-1}\cdots\Theta_{n-1}^{-1}(L_n^c)\in E^c_{0,\infty}.$$

By  Proposition \ref{quasi.iso} and Proposition \ref{L} it is  easy to see that $$\|\tilde{\omega}_{n+1}~-~\tilde{\omega}_n~\|=\O(\lambda^{\sqrt{n}}).$$ Therefore $\{\tilde{\omega}_{n}\}_{n\in\N}$ converges.

\begin{lemma}
\label{log.deriv}
Let $\omega=\lim_{n\to\infty}\tilde{\omega}_n\in E^c_{0,\infty}$ and $\omega^n\in E^c_{n,\infty}$ be the orbit given by Rauzy-Veech cocycle of $\omega,$ that is, $\omega^n=\Theta_0\cdots\Theta_{n-1}\omega.$ Then
$$\|\omega^n-L_n\|=\O(\la^{\sqrt{n}}).$$
\end{lemma}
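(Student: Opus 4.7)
The plan is to decompose $L_n$ along the splitting $\mathbb{R}^{\mathcal{A}} = E^s_n \oplus E^c_{n,\infty} \oplus E^u_n$ and control each piece separately. Write $L_n = L_n^s + L_n^c + L_n^u$. By Lemma \ref{Lstable} and Lemma \ref{Lunstable}, $\|L_n^s\| = \O(\lambda^{\sqrt{n}})$ and $\|L_n^u\| = \O(\lambda_5^{\sqrt{n}})$. Replacing $\lambda$ by $\max(\lambda,\lambda_5)$ (still in $(0,1)$), both stable and unstable contributions are already of the desired order. Thus the lemma reduces to proving $\|\omega^n - L_n^c\| = \O(\lambda^{\sqrt{n}})$.

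By the very definition of $\tilde{\omega}_n$ we have $L_n^c = \Theta_0 \Theta_1 \cdots \Theta_{n-1}(\tilde{\omega}_n)$, and $\omega^n = \Theta_0 \Theta_1 \cdots \Theta_{n-1}(\omega)$. Since both $\omega$ and $\tilde{\omega}_n$ lie in $E^c_{0,\infty}$, their difference does as well, and the quasi-isometry estimate of Proposition \ref{quasi.iso} yields
$$\|\omega^n - L_n^c\| = \|\Theta_0\cdots\Theta_{n-1}(\omega - \tilde{\omega}_n)\| \leq C_6 \|\omega - \tilde{\omega}_n\|.$$
Hence it suffices to bound the tail of the Cauchy sequence $\{\tilde{\omega}_n\}$.

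Using the remark preceding the statement of the lemma, $\|\tilde{\omega}_{k+1}-\tilde{\omega}_k\| = \O(\lambda^{\sqrt{k}})$, and since $\omega = \lim \tilde{\omega}_k$, we can telescope:
$$\|\omega - \tilde{\omega}_n\| \leq \sum_{k=n}^{\infty}\|\tilde{\omega}_{k+1}-\tilde{\omega}_k\| \leq C\sum_{k=n}^{\infty} \lambda^{\sqrt{k}}.$$
The only mildly subtle point is that this tail does not decay geometrically in $k$, so a naive ratio argument fails. Instead, compare with $\int_n^\infty e^{-c\sqrt{x}}\,dx$ where $c=-\ln\lambda>0$; the substitution $u=\sqrt{x}$ and integration by parts give $\int_n^\infty e^{-c\sqrt{x}}\,dx = \O(\sqrt{n}\,\lambda^{\sqrt{n}})$. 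The extra factor $\sqrt{n}$ is then absorbed by slightly enlarging the base: for any $\lambda'\in(\lambda,1)$ we have $\sqrt{n}\,\lambda^{\sqrt{n}} = \O((\lambda')^{\sqrt{n}})$. After this (harmless) adjustment of the constant $\lambda$, we conclude $\|\omega - \tilde{\omega}_n\| = \O(\lambda^{\sqrt{n}})$.

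Combining the three estimates,
$$\|\omega^n - L_n\| \leq \|\omega^n - L_n^c\| + \|L_n^s\| + \|L_n^u\| \leq C_6\|\omega-\tilde{\omega}_n\| + \O(\lambda^{\sqrt{n}}) = \O(\lambda^{\sqrt{n}}),$$
as claimed. The main conceptual ingredient is the quasi-isometric behavior of the cocycle on the central direction; the only technical wrinkle is the subexponential tail estimate $\sum_{k\geq n}\lambda^{\sqrt{k}} = \O(\lambda^{\sqrt{n}})$, handled by the integral comparison above.
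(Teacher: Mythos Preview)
Your proof is correct and follows essentially the same route as the paper: decompose $L_n$ along $E^s_n\oplus E^c_{n,\infty}\oplus E^u_n$, invoke Lemmas \ref{Lstable} and \ref{Lunstable} for the hyperbolic pieces, and use the quasi-isometry of Proposition \ref{quasi.iso} on the central direction to reduce to $\|\omega-\tilde\omega_n\|$. The only difference is that you spell out the tail estimate $\sum_{k\geq n}\lambda^{\sqrt{k}}=\O((\lambda')^{\sqrt{n}})$ via an integral comparison, whereas the paper simply asserts $\|\omega-\tilde\omega_n\|=\O(\lambda^{\sqrt{n}})$ from the increment bound $\|\tilde\omega_{k+1}-\tilde\omega_k\|=\O(\lambda^{\sqrt{k}})$; your added care here (including the harmless enlargement of $\lambda$) fills a step the paper leaves implicit.
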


\dem
By Lemma \ref{Lstable} and \ref{Lunstable} it is sufficient to estimate $\omega^n-L_n^c.$
\begin{eqnarray*}
\|\omega^n-L_n^c\|&=&\|\Theta_0\cdots\Theta_{n-1}\omega-L_n^c\|\\
&\leq & \|\left.\Theta_0\cdots\Theta_{n-1}\right|_{E^c_{0.\infty}}\|\cdot \|\omega-(\Theta_0\cdots\Theta_{n-1})^{-1}L_n^c\|\\
&\leq & C_{30}\cdot \left(1+\frac{1}{\ve_0}\right)\cdot \|\omega-\tilde{\omega}_n\|, \text{ by Proposition }\ref{quasi.iso}\\
&=& \O(\la^{\sqrt{n}}).
\end{eqnarray*}
\cqd

\subsection{Projective Metrics and Proof of Theorem \ref{teo3}}

The presentation follows Section 4.6 of \cite{viana}. Consider the convex cone $\mathbb{R}^{A}_+.$ Given any $\la,\gamma\in\mathbb{R}^{A}_+,$ define

$$a(\la,\gamma)=\inf_{\al\in\A}\frac{\la_\al}{\gamma_\al}\mbox{~~~and~~~}b(\la,\gamma)=\sup_{\beta\in\A}\frac{\la_\beta}{\gamma_\beta}.$$

The {\it projective metric} associated to $\mathbb{R}^{A}_+$ is defined by

$$\d_p(\la,\gamma)=\log\frac{b(\la,\gamma)}{a(\la,\gamma)}=\log\sup_{\al,\beta\in\A}\frac{\la_{\al}\gamma_{\beta}}{\gamma_{\alpha}\la_{\beta}}.$$

Follows easily from the definition that $\d_p$ satisfies, for all $\la,\gamma,\zeta\in\mathbb{R}^{A}_+$
\begin{itemize}
\item[(a)] $\d_p(\la,\gamma)=\d_p(\gamma,\al);$\\
\item[(b)] $\d_p(\la,\gamma)=\d_p(\al,\zeta)+\d_p(\zeta,\gamma);$\\
\item[(c)] $\d_p(\la,\gamma)\geq0;$\\
\item[(d)] $\d_p(\la,\gamma)=0$ if and only if there exists $t>0$ such that $\la=t\gamma.$
\end{itemize}

Let $G: \mathbb{R}^\A\to\mathbb{R}^\A$ be a linear operator such that $G(\mathbb{R}^{A}_+)\subset\mathbb{R}^{A}_+$ or, equivalently, $G_{\al\beta}\geq0$ for all $\al,\beta\in\A,$ where $G_{\al\beta}$ are the entries of the matrix $G.$ Then for all $\al,\gamma\in\mathbb{R}^{\A},$

$$\d_p(G(\la),G(\gamma))\leq \d_p(\la,\gamma).$$

Now, we define $g:\Delta_\A\to\Delta_\A$ by

$$g(\la)=\frac{G(\la)}{\sum_{\al\in\A}G(\la)_{\al}}=\frac{G(\la)}{\sum_{\al,\beta\in\A}G_{\al\beta}\la_{\beta}}.$$

We say $g$ is the projectivization of $G.$

The next proposition, whose proof can be found in \cite{viana}, ensures that if $g(\Delta_\A)$ has finite $\d_p-$diameter in $\Delta_\A$ then $g$ is a uniform contraction relative to the metric projective:

\begin{pro}
\label{contracao}
For any $\Delta>0$ there is $\kappa<1$ such that if the diameter of $G(\mathbb{R}^\A_+)$ relative to $\d_p$ is less than $\Delta$ then
$$\d_p(G(\la),G(\gamma))\leq\kappa\cdot\d_p(\la,\gamma)\quad\mbox{for all}\quad \la,\gamma\in\mathbb{R}^\A_+.$$
\end{pro}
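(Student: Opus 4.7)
The plan is to adapt the classical Birkhoff--Hopf contraction argument for cones with finite projective diameter. Given $\la, \gamma \in \R^\A_+$, I would first set $a = a(\la, \gamma)$ and $b = b(\la, \gamma)$, so that the ``residuals'' $s := \la - a\gamma$ and $t := b\gamma - \la$ lie in the closed non-negative cone, each with at least one vanishing coordinate, and $s + t = (b-a)\gamma$. If $\d_p(\la, \gamma) = 0$ then $\la$ and $\gamma$ are proportional and the conclusion is immediate, so I may assume $b > a$, whence $s$ and $t$ are both non-zero. Applying $G$ gives the two identities
$$Gs = G\la - a\,G\gamma, \qquad Gt = b\,G\gamma - G\la.$$

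Next I would invoke the diameter hypothesis. Since the entries of $G$ are non-negative, $Gs$ and $Gt$ lie in the closure of $G(\R^\A_+)$; by approximating $s$ and $t$ from within the open cone $\R^\A_+$ and passing to the limit, the hypothesis gives $\d_p(Gs, Gt) \le \Delta$. Consequently there exist positive constants $c_- \le c_+$ with $c_+/c_- \le e^\Delta$ such that $c_-\, Gt \le Gs \le c_+\, Gt$ coordinate-wise. Substituting the identities above into these inequalities and solving for $G\la$ yields
$$\frac{a + c_- b}{1 + c_-}\, G\gamma \;\le\; G\la \;\le\; \frac{a + c_+ b}{1 + c_+}\, G\gamma,$$
which in turn gives $a(G\la, G\gamma) \ge (a + c_- b)/(1 + c_-)$ and $b(G\la, G\gamma) \le (a + c_+ b)/(1 + c_+)$.

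The final step, and the main difficulty, is to extract a contraction constant $\kappa < 1$ depending only on $\Delta$. Writing $\rho = b/a = e^{\d_p(\la, \gamma)}$, the previous bounds give
$$e^{\d_p(G\la, G\gamma)} \;\le\; \frac{(1 + c_+\, \rho)(1 + c_-)}{(1 + c_-\, \rho)(1 + c_+)}.$$
I would then maximize the right-hand side over admissible pairs $(c_-, c_+)$ with $c_+/c_- \le e^\Delta$, which after the substitution $c_- = x$, $c_+ = e^\Delta x$ reduces to a single-variable calculus problem. The classical Birkhoff computation produces the sharp coefficient $\kappa = \tanh(\Delta/4)$; for the qualitative statement required here it is in any case enough to observe that the right-hand side equals $1$ at $\rho = 1$ and that its logarithmic derivative in $\log \rho$ at $\rho = 1$ is strictly less than $1$ as long as the ratio $c_+/c_-$ stays bounded, which provides a uniform contraction factor $\kappa = \kappa(\Delta) < 1$ as required.
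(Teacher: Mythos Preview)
The paper does not prove this proposition; it simply states it and refers the reader to Viana's lecture notes \cite{viana}. Your write-up is the standard Birkhoff--Hopf contraction argument, which is exactly what one finds in that reference, and the main steps (introducing $s=\la-a\gamma$, $t=b\gamma-\la$, using the diameter bound to compare $Gs$ and $Gt$, and then reassembling the two-sided inequality for $G\la$ in terms of $G\gamma$) are carried out correctly.

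Two small points of exposition are worth tightening. First, your limiting argument for $\d_p(Gs,Gt)\le\Delta$ tacitly uses that $Gs$ and $Gt$ lie in the \emph{open} cone; this is true because the hypothesis that $G(\R^\A_+)$ has finite projective diameter forces every entry of $G$ to be strictly positive (otherwise one manufactures sequences whose images have projective distance tending to infinity), and hence $G$ sends any non-zero non-negative vector to a strictly positive one. It is cleaner to state this once than to rely on approximation. Second, your ``qualitative'' closing sentence is not enough as written: knowing that the logarithmic derivative in $\log\rho$ is strictly less than $1$ \emph{at} $\rho=1$ does not by itself give a uniform bound for all $\rho\ge 1$ and all admissible $(c_-,c_+)$. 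What actually works is the computation you allude to just before: setting $u=c_+\rho$, $v=c_-\rho$ one finds
\[
\frac{d}{d\log\rho}\,\log\!\left(\frac{(1+c_+\rho)(1+c_-)}{(1+c_-\rho)(1+c_+)}\right)
=\frac{u}{1+u}-\frac{v}{1+v}
=\frac{u-v}{(1+u)(1+v)}\le\frac{\sqrt{e^\Delta}-1}{\sqrt{e^\Delta}+1}=\tanh(\Delta/4),
\]
uniformly in $\rho$ and in $(c_-,c_+)$ with $c_+/c_-\le e^\Delta$; integrating from $\rho=1$ then yields $\d_p(G\la,G\gamma)\le\tanh(\Delta/4)\,\d_p(\la,\gamma)$. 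With these two clarifications your proof is complete.
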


Let $\zeta^n$ be  the partition vector of $f_n=R^n(f).$ Define the following linear operator  $T_n:\mathbb{R}^\A\to\mathbb{R}^\A$ whose the matrix is given by

$$
T_n=(T_n)_{ij}=\left\{
\begin{array}{ll}
1,& \mbox{~if~} i=j \mbox{~and~} i\not=\al^n(1-\varepsilon^nn)\\
\exp(\varepsilon^n\cdot L^n_{\al^n(1)}), & \mbox{~if~} i=j=\al^n(1-\varepsilon^n)\\
\exp((1-\varepsilon^n)\cdot L_{\al^n(1)}^n),& \mbox{~if~}  i=\al^n(\varepsilon^n), j=\al^n(1-\varepsilon^n)\\
0,& \mbox{otherwise,}
\end{array}
\right.
$$
where $L_{n,\al^n(1-\varepsilon)}$ is defined by \eqref{mean-deriv}.

\begin{lemma}
\label{sr}
Let $f\in\mathcal{B}_{k}^{2+\nu}$ with $\int_0^1D^2f(s)/ Df(s)ds=0.$ Then for all $n\geq0$
$$T_n\zeta^{n+1}=\zeta^n+\O(\la^{n}).$$
\end{lemma}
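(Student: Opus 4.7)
The plan is to verify the identity entrywise, checking in each Rauzy-Veech type that $T_n$ is exactly the matrix by which $\zeta^{n+1}$ would be transformed to $\zeta^n$ if $f_n$ were affine on each piece with average log-slope $L^n_\al$, and then to control the error arising from the fact that $f_n$ is not literally affine.

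Concretely, in type $0$ ($\ve^n = 0$) only the winner's coordinate is modified: the Rauzy-Veech rule gives $\zeta^n_{\al^n(0)} = \zeta^{n+1}_{\al^n(0)} + |f_n(I^n_{\al^n(1)})|$ and $\zeta^n_\al = \zeta^{n+1}_\al$ for every other $\al$. Reading off the definition of $T_n$ shows that in this type $T_n = \mathbb{I} + e^{L^n_{\al^n(1)}}\, E_{\al^n(0),\al^n(1)}$, so the only nonzero coordinate of $T_n\zeta^{n+1} - \zeta^n$ is $e^{L^n_{\al^n(1)}}|I^n_{\al^n(1)}| - |f_n(I^n_{\al^n(1)})|$. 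In type $1$ ($\ve^n=1$) the loser $\al^n(0)$ disappears from the right of the domain and reappears as the subinterval $I^{n+1}_{\al^n(0)} = f_n^{-1}(I^n_{\al^n(0)})\cap I^n_{\al^n(1)}$, so $\zeta^n_{\al^n(0)} = |f_n(I^{n+1}_{\al^n(0)})|$ while $\zeta^n_{\al^n(1)} = \zeta^{n+1}_{\al^n(1)} + \zeta^{n+1}_{\al^n(0)}$. Matching with $T_n$, the sole discrepancy is again of the form $e^{L^n_{\al^n(1)}}|J| - |f_n(J)|$, this time for the interval $J = I^{n+1}_{\al^n(0)} \subset I^n_{\al^n(1)}$; the slightly asymmetric definition of $T_n$ (a nontrivial diagonal entry at the loser in one case, a nontrivial off-diagonal entry at the winner in the other) is precisely what is needed to reproduce these two different geometric update rules.

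In both cases the lemma thus reduces to a single estimate of the form $|f_n(J)| - e^{L^n_\al}|J| = \O(\la^n)$ for $J \subset I^n_\al$. For this I would invoke Theorem~3 of \cite{cunhasmania}, the same distortion control already used in the proof of Proposition~\ref{L}: under the zero mean nonlinearity hypothesis and $k$-bounded combinatorics, $\ln Df_n$ has oscillation $\O(\la^{\sqrt n})$ on each $I^n_\al$. Since $L^n_\al$ is by definition the mean of $\ln Df_n$ on $I^n_\al$, a first-order Taylor expansion of the exponential yields $|f_n(J)| = e^{L^n_\al}|J|\bigl(1 + \O(\la^{\sqrt n})\bigr)$. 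Combining this with the exponential decay $|I^n_\al| = \O(\la^n)$ coming from the hyperbolicity of the Rauzy-Veech cocycle (Proposition~\ref{hyp}) and the uniform boundedness of $e^{L^n_\al}$ (via Proposition~\ref{quasi.iso} together with Lemmas~\ref{Lstable} and \ref{Lunstable}), the error is in fact $\O(\la^{n+\sqrt n})$, which is $\O(\la^n)$ after possibly enlarging $\la$.

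The main difficulty is not in any new analytic estimate but in the careful bookkeeping: one must check that the asymmetric matrix entries of $T_n$ correctly reproduce both the ``image-side'' update in type $0$ (where $f_n$ acts forward on the loser's own interval) and the ``domain-side'' update in type $1$ (where one must pull the new loser subinterval back through the winner). Once this identification is in place, the distortion estimate from \cite{cunhasmania} closes the argument as a black box.
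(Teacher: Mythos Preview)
Your proposal is correct and is exactly the unfolding of the paper's one-line proof (``It follows easily by the definition of the Rauzy-Veech induction and by Theorem~3 of \cite{cunhasmania}''): you carry out the entrywise comparison of $T_n\zeta^{n+1}$ with $\zeta^n$ in each type, reduce to an estimate of the form $e^{L^n_{\al^n(1)}}|J|-|f_n(J)|$ for $J\subset I^n_{\al^n(1)}$, and close it using the small oscillation of $\ln Df_n$ from \cite{cunhasmania} together with the exponential smallness of $|I^n|$ under $k$-bounded combinatorics. The only minor remark is that the exponential decay $|I^n_\al|=\O(\la^n)$ and the boundedness of $L^n$ are more directly available from \cite{cunhasmania} (bounded geometry and their Lemma~3.5) than from Proposition~\ref{hyp} and Lemmas~\ref{Lstable}--\ref{Lunstable}, but your citations are also valid and noncircular.
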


\dem
It follows easily by the definition of the Rauzy-Veech induction and by Theorem 3 of \cite{cunhasmania}.
\cqd

Note that $(T_n)_{ij}\geq0$ for all $n\geq0.$ Then we can define the  projectivization of $T_n,$ this is, the map $T_n^\nor: \Delta_{\A}\to\Delta_{\A}$ given by

\begin{eqnarray*}
T_n^\nor\zeta^{n+1}=\frac{T_n\zeta^{n+1}}{|T_n\zeta^{n+1}|_1},
\end{eqnarray*}
where $|\cdot|_1$ denote the sum-norm.

Let $\omega$ be  either as in Lemma \ref{log.deriv} (in this case $\omega \in E^c_{0,\infty}$) or a perturbation of it by a vector in $E^s_0$.    Define $\omega^n=\Theta_0\cdots\Theta_{n-1}\omega.$
Define $\tilde{T}_n$ and $\tilde{T}^\nor_n$ by

$$
\tilde{T}_n=(\tilde{T}_n)_{ij}=\left\{
\begin{array}{ll}
1,& \mbox{~if~} i=j \mbox{~and~} i\not=\al^n(1-\varepsilon^n)\\
\exp(\varepsilon^n\cdot \omega^n_{\al^n(1)}), & \mbox{~if~} i=j=\al^n(1-\varepsilon^n)\\
\exp((1-\varepsilon^n)\cdot\omega_{\al^n(1)}^n),& \mbox{~if~}  i=\al^n(\varepsilon^n), j=\al^n(1-\varepsilon^n)\\
0,& \mbox{otherwise,}
\end{array}
\right.
$$
and
$$\tilde{T}_n^\nor\zeta=\frac{\tilde{T}_n\zeta}{|\tilde{T}_n\zeta|_1}.$$

\begin{lemma}
\label{vecpart}
Let $f\in\mathcal{B}_{k}^{2+\nu}$ with $\int_0^1D^2f(s)/ Df(s)ds=0.$ Then for all $n\geq0$
$$\zeta^{n}=\tilde{T}^\nor_n\zeta^{n+1}+\O(\la^{\sqrt{n}}),$$
\end{lemma}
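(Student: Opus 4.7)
The plan is to deduce Lemma \ref{vecpart} from Lemma \ref{sr} by replacing the matrix $T_n$ (built from the true mean log-derivatives $L^n$) by its central-direction approximant $\tilde T_n$ (built from $\omega^n$), and then comparing the two normalizations. The whole argument is a three-step perturbation computation, where all the real dynamical work is already hidden in Lemmas \ref{sr} and \ref{log.deriv}.

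First I would show that $\tilde T_n$ and $T_n$ are close. By Lemma \ref{log.deriv} the components of $\omega^n$ and $L^n$ differ by $\O(\lambda^{\sqrt n})$, and both stay in a compact set (the $L^n$ are uniformly bounded because $f$ is $C^{2+\nu}$ with bounded combinatorics, and the $\omega^n$ are uniformly bounded by Proposition \ref{quasi.iso}). Since each nonzero entry of $T_n$ and of $\tilde T_n$ is either $1$ or the exponential of a single coordinate of $L^n$ or $\omega^n$, and exponential is Lipschitz on a bounded interval, we obtain
\begin{equation*}
\|\tilde T_n - T_n\|_{\mathrm{op}} = \O(\lambda^{\sqrt n}).
\end{equation*}

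Next, since $\zeta^{n+1}\in\Delta_{\mathcal A}$ is uniformly bounded, applying both matrices to $\zeta^{n+1}$ and invoking Lemma \ref{sr} yields
\begin{equation*}
\tilde T_n\zeta^{n+1} = T_n\zeta^{n+1} + \O(\lambda^{\sqrt n}) = \zeta^n + \O(\lambda^{\sqrt n}).
\end{equation*}
It remains to pass from $\tilde T_n\zeta^{n+1}$ to its projectivization. Since $\zeta^n \in \Delta_{\mathcal A}$ we have $|\zeta^n|_1 = 1$, so taking the $|\cdot|_1$-norm of the displayed identity gives $|\tilde T_n\zeta^{n+1}|_1 = 1 + \O(\lambda^{\sqrt n})$. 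Writing $1/(1+x) = 1 + \O(x)$ for small $x$ and using that $\tilde T_n\zeta^{n+1}$ is uniformly bounded, we conclude
\begin{equation*}
\tilde T_n^{\nor}\zeta^{n+1} \;=\; \frac{\tilde T_n\zeta^{n+1}}{|\tilde T_n\zeta^{n+1}|_1} \;=\; \tilde T_n\zeta^{n+1} + \O(\lambda^{\sqrt n}) \;=\; \zeta^n + \O(\lambda^{\sqrt n}),
\end{equation*}
which is the claim.

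The only potentially delicate point is the uniform bound that feeds the Lipschitz estimate on the exponentials. For $L^n$ uniform boundedness follows from the fact that $f\in\mathcal B_k^{2+\nu}$ has bounded distortion along each piece (a consequence of the same kind of $C^{2+\nu}$ control used in \cite{cunhasmania} to produce Theorem 3 quoted in the proof of Proposition \ref{L}), and for $\omega^n$ it follows from Proposition \ref{quasi.iso} together with the fact that $\omega = \lim_n \tilde\omega_n$ lies in $E^c_{0,\infty}$, whose forward orbit under the cocycle is bounded. With this boundedness in hand the rest is routine; no further dynamical input beyond Lemmas \ref{sr} and \ref{log.deriv} is needed.
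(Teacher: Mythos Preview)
Your proof is correct and follows essentially the same route as the paper's: both arguments combine Lemma \ref{sr} (to pass from $\zeta^n$ to $T_n\zeta^{n+1}$) with Lemma \ref{log.deriv} (to replace $T_n$ by $\tilde T_n$), and then handle the normalization; the paper just writes the triangle inequality with $T_n^{\nor}\zeta^{n+1}$ as the midpoint and cites the relevant lemmas (including Lemmas 3.6 and 3.7 of \cite{cunhasmania} for the uniform bounds you spell out), whereas you compare the unnormalized matrices first and normalize at the end. One small remark: in the paper $\omega$ is allowed to be the vector of Lemma \ref{log.deriv} \emph{plus} a perturbation in $E^s_0$, so your boundedness argument for $\omega^n$ via Proposition \ref{quasi.iso} should be supplemented by the observation that the $E^s_0$-component of $\omega^n$ decays exponentially under the cocycle; this is immediate from Proposition \ref{hyp} and does not affect the rest of your argument.
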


\dem
\begin{eqnarray*}
|\zeta^{n}-\tilde{T}^\nor_n(\zeta^{n+1})|\leq|\zeta^n-T^\nor_n\zeta^{n+1}|+|T^\nor_n\zeta^{n+1}-\tilde{T}^\nor_n\zeta^{n+1}|.
\end{eqnarray*}
Both terms above are of order $\la^{\sqrt{n}}$ by Lemma \ref{log.deriv}, Lemma \ref{sr} and Lemma 3.6, Lemma 3.7 of \cite{cunhasmania}.


\cqd

Given $n,m\in\N$ with $n<m$ define

$$\T_n^m:=\tilde{T}^{\nor}_n\cdots \tilde{T}^{\nor}_m.$$

By definition of  $\tilde{T}^{\nor}_n$ we know that
$$\T_n^m>0 \Leftrightarrow \Theta_{\pi^n,\ve^n}\cdots \Theta_{\pi^m,\ve^m}>0.$$

Then by \cite[Section 1.2.4]{coho} we have that for all $n\geq k(2d-3)$
\begin{eqnarray}
\label{contraction}
\T_{n-k(2d-3)}^{n-1}>0.
\end{eqnarray}
By \cite{cunhasmania} there exists $C$ such that $|\omega^n_\alpha|\leq C$ for every $n$ and $\alpha\in \A$.  This implies that  there exists $c_0$ such that for all $n$
$$\T_{n-k(2d-3)}^{n-1}\Delta_\A \subset K_{c_0},$$
where  $K_c\subset\Delta_\A$ is  the set of all $(\zeta_\al)_{\al \in \A}$ such that $\zeta_\al \geq c > 0$, for all $\al \in \A$. Note that $K_c$ is relatively compact in $\Delta_\A$ and by definition of $\d_p$
$$\mathrm{diam}(K_c)=\sup_{x,y\in K_c}\d_p(x,y)<\infty.$$
and therefore $\T_{n-k(2d-3)}^{n-1}$ is a contraction in the projective metric, and the rate of contraction can be taken uniformly for all $n$.  We will denote this rate  by $\kappa < 1$,

Let $c < c_0$ be such that   $\zeta^n_\al\geq c>0$ for all $n\geq 0$ and for all $\al\in\A.$ Such  $c$ does exist  by Lemma 3.6 and Lemma 3.7 of \cite{cunhasmania}. It is easy to see that the metrics $\d_p$ and $|\cdot|_1$ are equivalent in $K_c$.

Note that for every $n$ and $j$
$$\T_{j}^{n+1}\Delta_\A \subset \T_{j}^{n}\Delta_\A.$$
Moreover
$$diam \ \T_{j}^{n} \leq C \kappa^{(n-j)/k(2d-3)}.$$
In particular
$$\{ \tilde{\zeta}^j \} = \bigcap_{n\geq j} \T_{j}^{n}\Delta_\A$$
for some positive vector $\tilde{\zeta}^j \in K_{c_0}$. As a consequence
\begin{equation}\label{comb.equal} \tilde{T}_n^\nor\tilde{\zeta}^{n+1}=\tilde{\zeta}^{n}.\end{equation}

\begin{lemma}
\label{vp}
Let $f\in \mathcal{B}^{2+\nu}_{k}$ be such that  $\int_0^1D^2f(s)/ Df(s)ds=0$. Let  $f_A$ be  its affine model and let $\zeta^n,\tilde{\zeta}^n$ be  as above.  Then
$$|\zeta^n-\tilde{\zeta}^n|_1=\O(\la^{\sqrt{n/2}}).$$
\end{lemma}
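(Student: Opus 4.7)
I would view $\{\zeta^n\}$ as a pseudo-orbit of the non-autonomous projective dynamics $\tilde{T}_n^{\nor}$, with error $O(\lambda^{\sqrt{n}})$ by Lemma~\ref{vecpart}, and $\{\tilde{\zeta}^n\}$ as the true backward-invariant orbit of the same dynamics via \eqref{comb.equal}. The proof then reduces to a shadowing estimate, where the main tool is the uniform projective contraction of blocks $\mathcal{T}_n^{n+N-1}$ for $N:=k(2d-3)$ with ratio $\kappa<1$, already established in the paragraph following \eqref{contraction}. Throughout I work in the Hilbert metric $d_p$ and switch back to $|\cdot|_1$ at the very end, using the equivalence of the two metrics on $K_{c_0}\subset\Delta_{\mathcal{A}}$.

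For any $m\geq 1$, the triangle inequality yields
\begin{equation*}
d_p(\zeta^n,\tilde\zeta^n) \;\leq\; d_p\bigl(\zeta^n,\mathcal{T}_n^{n+m-1}\zeta^{n+m}\bigr) \;+\; d_p\bigl(\mathcal{T}_n^{n+m-1}\zeta^{n+m},\,\mathcal{T}_n^{n+m-1}\tilde\zeta^{n+m}\bigr),
\end{equation*}
after using \eqref{comb.equal} to rewrite $\tilde\zeta^n=\mathcal{T}_n^{n+m-1}\tilde\zeta^{n+m}$. The second summand is at most $C\kappa^{\lfloor m/N\rfloor}$, obtained by $\lfloor m/N\rfloor$-fold application of the uniform contraction to the bounded $d_p$-diameter of $K_{c_0}$. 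For the first summand I would telescope along $\xi_j:=\mathcal{T}_n^{n+j-1}\zeta^{n+j}$ (with the convention $\xi_0=\zeta^n$):
\begin{equation*}
d_p\bigl(\zeta^n,\mathcal{T}_n^{n+m-1}\zeta^{n+m}\bigr) \;\leq\; \sum_{j=0}^{m-1} d_p(\xi_j,\xi_{j+1}) \;\leq\; \sum_{j=0}^{m-1} d_p\bigl(\zeta^{n+j},\,\tilde T_{n+j}^{\nor}\zeta^{n+j+1}\bigr),
\end{equation*}
the last step using that each prefix $\mathcal{T}_n^{n+j-1}$ is $1$-Lipschitz in $d_p$. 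Lemma~\ref{vecpart} bounds each summand in $|\cdot|_1$ by $O(\lambda^{\sqrt{n+j}})$; for $n$ large this transfers to $d_p$ because the a priori bound $\zeta^{n+j}_\alpha\geq c$ from \cite{cunhasmania}, together with the smallness of the perturbation, places both arguments in some $K_{c/2}$ where the two metrics are uniformly comparable.

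Combining the two bounds gives, for $n$ large,
\begin{equation*}
d_p(\zeta^n,\tilde\zeta^n) \;\leq\; C_1\sum_{j=0}^{m-1}\lambda^{\sqrt{n+j}} \;+\; C_2\,\kappa^{m/N}.
\end{equation*}
Taking $m=n$, the first term is at most $C_1 n\lambda^{\sqrt{n}}$, which is $o(\lambda^{\sqrt{n/2}})$ because $\sqrt{n}-\sqrt{n/2}=(1-1/\sqrt 2)\sqrt{n}$ grows fast enough to absorb the factor $n$; the second term $C_2\kappa^{n/N}$ decays exponentially in $n$ and is therefore trivially $o(\lambda^{\sqrt{n/2}})$. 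Finally, switching $d_p$ back to $|\cdot|_1$ via the metric equivalence on the compact set containing both $\zeta^n$ and $\tilde\zeta^n$ delivers $|\zeta^n-\tilde\zeta^n|_1=O(\lambda^{\sqrt{n/2}})$.

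\textbf{Main obstacle.} The only delicate point is the metric-equivalence bookkeeping in the telescoping step: one must check that $\tilde T_{n+j}^{\nor}\zeta^{n+j+1}$ still has coordinates uniformly bounded below, so that the $|\cdot|_1$ estimate of Lemma~\ref{vecpart} converts to a $d_p$ estimate with a constant independent of $j$. This is where $n$ has to be large enough so that the perturbation $O(\lambda^{\sqrt{n+j}})$ is smaller than $c/2$; the finitely many small values of $n$ are absorbed into the implicit constant in the $O(\cdot)$.
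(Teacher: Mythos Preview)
Your proposal is correct and follows essentially the same shadowing argument as the paper: split $d_p(\zeta^n,\tilde\zeta^n)$ into a telescoping pseudo-orbit error controlled by Lemma~\ref{vecpart} and a tail term controlled by the uniform projective contraction of the blocks $\mathcal{T}$, then balance the two. The only cosmetic difference is direction---the paper telescopes \emph{backward} from a later index $n$ down to $n-jk(2d-3)$ (so the per-step errors grow and must be damped by the contraction factor $\kappa^{j-1-i}$), whereas you telescope \emph{forward} from $n$ to $n+m$ (so the errors shrink and mere non-expansiveness suffices); your variant is marginally cleaner for exactly the reason you identify.
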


\dem

First note that for every $n$
\begin{eqnarray*}
\d_p(\zeta^{n-k(2d-3)},\mathcal{T}_{n-k(2d-3)}^{n-1}\zeta^{n})&\leq&\d_p(\zeta^{n-k(2d-3)},\tilde{T}_{n-k(2d-3)}^\nor\zeta^{n-k(2d-3)+1})\\
& &+\d_p(\tilde{T}_{n-k(2d-3)}^\nor\zeta^{n-k(2d-3)+1},\mathcal{T}_{n-k(2d-3)}^{n-1}\zeta^{n})\\
& \leq & \O(\la^{\sqrt{n-k(2d-3)}})+\d_p(\zeta^{n-k(2d-3)+1},\mathcal{T}_{n-k(2d-3)+1}^{n-1}\zeta^{n}).
\end{eqnarray*}

Applying this  $k(2d-3)-$times, we obtain
\begin{eqnarray}
\label{0est}
\d_p(\zeta^{n-k(2d-3)},\mathcal{T}_{n-k(2d-3)}^{n-1}\zeta^{n})\leq\sum_{i=0}^{k(2d-3)-1} \O(\la^{\sqrt{n-k(2d-3)+i}}),
\end{eqnarray}
but

\begin{eqnarray}
\label{1est}
\sum_{i=0}^{k(2d-3)-1} \O(\la_5^{\sqrt{n-k(2d-3)+i}})&=& \O(\la^{\sqrt{n-k(2d-3)}})\sum_{i=0}^{k(2d-3)-1} \O(\la^{\sqrt{n-k(2d-3)+i}-\sqrt{n-k(2d-3)}})\nonumber\\
&\leq & k(2d-3)\cdot  \O(\la^{\sqrt{n-k(2d-3)}})\nonumber\\
&=& \O(\la^{\sqrt{n-k(2d-3)}}).
\end{eqnarray}

The Eq. \eqref{0est} jointly with Eq. \eqref{1est} give us
\begin{eqnarray}
\label{2est}
\d_p(\zeta^{n-k(2d-3)},\mathcal{T}_{n-k(2d-3)}^{n-1}\zeta^{n})= \O(\la^{\sqrt{n-k(2d-3)}}).
\end{eqnarray}

From  Eq. \eqref{2est} it is easy  to see that for all $1\leq j \leq \left[\frac{n}{k(2d-3)}\right]$

\begin{eqnarray}
&&d_p(\zeta^{n-jk(2d-3)},\mathcal{T}_{n-jk(2d-3)}^{n-1}\zeta^{n}) \nonumber \\
&\leq & \sum_{i=0}^{j-1}d_p(\mathcal{T}_{n-jk(2d-3)}^{n-ik(2d-3)-1}\zeta^{n-ik(2d-3)},\mathcal{T}_{n-jk(2d-3)}^{n-(i+1)k(2d-3)-1}\zeta^{n-(i+1)k(2d-3)}) \nonumber  \\
&\leq & \sum_{i=0}^{j-1}d_p(\mathcal{T}_{n-jk(2d-3)}^{n-(i+1)k(2d-3)-1}\mathcal{T}_{n-(i+1)k(2d-3)}^{n-ik(2d-3)-1}\zeta^{n-ik(2d-3)},\mathcal{T}_{n-jk(2d-3)}^{n-(i+1)k(2d-3)-1}\zeta^{n-(i+1)k(2d-3)}) \nonumber  \\
&=&\sum_{i=0}^{j-1}\kappa^{j-1-i}\O(\la^{\sqrt{n-(i+1)k(2d-3)}})\nonumber \\
& \leq & \O(\la^{\sqrt{n-jk(2d-3)}})\cdot  \sum_{i=0}^{j-1}\kappa^{j-1-i}\nonumber\\
& \leq & \frac{1}{1-\kappa}\cdot  \O(\la^{\sqrt{n-jk(2d-3)}})\nonumber.
\end{eqnarray}
so
\begin{equation}
\label{3est}
d_p(\zeta^{n-jk(2d-3)},\mathcal{T}_{n-jk(2d-3)}^{n-1}\zeta^{n})= \O(\la^{\sqrt{n-jk(2d-3)}}).
\end{equation}

Then
\begin{eqnarray}
\label{4est}
\d_p(\zeta^{n-jk(2d-3)},\tilde{\zeta}^{n-jk(2d-3)})& \leq & \d_p(\zeta^{n-jk(2d-3)}, \T_{n-jk(2d-3)}^{n-1}\zeta^n)\nonumber\\
& & + \d_p(\T_{n-jk(2d-3)}^{n-1}\zeta^n,\T_{n-jk(2d-3)}^{n-1}\tilde{\zeta}^n)\nonumber\\
&\leq & \O(\la^{\sqrt{n-jk(2d-3)}}) + \kappa^{j}\cdot \d_p(\zeta^n,\tilde{\zeta}^n)\nonumber\\
&\leq &  \O(\la^{\sqrt{n-jk(2d-3)}}) + \kappa^{j}\cdot \mathrm{diam}(K)
\end{eqnarray}

Taking $j=\left[\frac{n}{2k(2d-3)}\right]$ in the Eq. \eqref{4est} we have that
$$
\d_p(\zeta^{[n/2]},\tilde{\zeta}^{[n/2]})\leq  \O(\la^{\sqrt{[n/2]}})+\kappa^{\left[\frac{n}{2k(2d-3)}\right]}\cdot \mathrm{diam}(K),
$$
and therefore
\begin{eqnarray}
\label{5est}
\d_p(\zeta^{n},\tilde{\zeta}^{n})=\O(\la^{\sqrt{n}}).
\end{eqnarray}

\cqd

\begin{pro}\label{igualdade.maluca} We have
$$\sum_{i\in \mathcal{A}} e^{\omega_i}\tilde{\zeta}^0_i =1.$$
\end{pro}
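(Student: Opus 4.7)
The plan is to set $\mathrm{tot}_n := \sum_{\alpha\in\A} e^{\omega^n_\alpha}\,\tilde\zeta^n_\alpha$ and show that $\mathrm{tot}_n = 1$ for every $n$; the stated proposition is the case $n=0$. The argument has two independent parts: a recursion forcing $|\mathrm{tot}_n - 1|$ to be non-decreasing in $n$, and an independent convergence $\mathrm{tot}_n \to 1$.

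First I would derive the recursion
$$(1+y_n)\,\mathrm{tot}_n \;=\; \mathrm{tot}_{n+1} + y_n,\qquad y_n := e^{\omega^n_{\alpha^n(1)}}\,\tilde\zeta^{n+1}_{\alpha^n(1-\ve^n)} \;>\;0,$$
by starting from \eqref{comb.equal} and expanding both sides. On one hand, $|\tilde{T}_n\tilde\zeta^{n+1}|_1 = 1 + y_n$ by a direct computation (using $\sum_\alpha \tilde\zeta^{n+1}_\alpha = 1$ and the one nonzero off-diagonal entry of $\tilde{T}_n$); on the other, $\sum_\alpha e^{\omega^n_\alpha}(\tilde{T}_n\tilde\zeta^{n+1})_\alpha = \mathrm{tot}_{n+1} + y_n$, which uses the cancellation $e^{\omega^n_{\alpha^n(\ve^n)}+\omega^n_{\alpha^n(1-\ve^n)}} = e^{\omega^{n+1}_{\alpha^n(1-\ve^n)}}$ provided by \eqref{der2}. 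Both expressions need to be checked in the two cases $\ve^n \in\{0,1\}$; they reduce to the same formula. Subtracting $1+y_n$ gives $\mathrm{tot}_{n+1}-1 = (1+y_n)(\mathrm{tot}_n-1)$, and iterating,
$$\mathrm{tot}_n - 1 \;=\; (\mathrm{tot}_0 - 1)\prod_{i=0}^{n-1}(1+y_i).$$
Since $y_i > 0$, the product is $\geq 1$, so $|\mathrm{tot}_n - 1| \geq |\mathrm{tot}_0 - 1|$ for every $n$.

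Second, I would prove $\mathrm{tot}_n \to 1$ by comparison with $f$ itself. Since $f_n$ is a bijection of $I^n$,
$$1 \;=\; \frac{1}{|I^n|}\sum_{\alpha}|f_n(I^n_\alpha)| \;=\; \sum_\alpha \zeta^n_\alpha\,\overline{Df_n}\big|_{I^n_\alpha},$$
where $\overline{Df_n}\big|_{I^n_\alpha}$ is the arithmetic mean of $Df_n$ on $I^n_\alpha$. The variation estimate for $\ln Df_n$ on $I^n_\alpha$ from Theorem~3 of \cite{cunhasmania} (the same estimate used in the proof of Proposition~\ref{L}) yields $|\ln Df_n(x) - L^n_\alpha| = \O(\la^{\sqrt{n}})$ uniformly on $I^n_\alpha$, so a Taylor expansion (and the uniform bound on $L^n_\alpha$) gives $|\overline{Df_n}|_{I^n_\alpha} - e^{L^n_\alpha}| = \O(\la^{\sqrt{n}})$. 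Hence $\sum_\alpha e^{L^n_\alpha}\zeta^n_\alpha = 1 + \O(\la^{\sqrt{n}})$. Replacing $L^n$ by $\omega^n$ via Lemma~\ref{log.deriv} and $\zeta^n$ by $\tilde\zeta^n$ via Lemma~\ref{vp}, together with the uniform boundedness of $e^{L^n_\alpha}$ and $\zeta^n_\alpha$, I obtain $|\mathrm{tot}_n - 1| = \O(\la^{\sqrt{n/2}}) \to 0$.

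Combining the two parts gives $|\mathrm{tot}_0-1| \leq |\mathrm{tot}_n - 1| \to 0$, hence $\mathrm{tot}_0 = 1$. The main technical point is the careful bookkeeping in deriving the recursion: making the two cases $\ve^n \in \{0,1\}$ collapse into a single formula requires invoking \eqref{der2} at exactly the right step, with the factor $e^{\omega^n_{\alpha^n(1)}}$ (rather than $e^{\omega^n_{\alpha^n(\ve^n)}}$) coming out as the winner/loser asymmetry in the definition of $\tilde{T}_n$.
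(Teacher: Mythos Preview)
Your proof is correct and follows essentially the same approach as the paper: both derive the identity $\mathrm{tot}_{n+1}-1=(1+y_n)(\mathrm{tot}_n-1)$ from \eqref{comb.equal} and the cocycle relation \eqref{der2}, and both establish $\mathrm{tot}_n\to 1$ by comparing with $\sum_\alpha e^{L^n_\alpha}\zeta^n_\alpha$ via Lemma~\ref{log.deriv} and Lemma~\ref{vp}. The only cosmetic difference is that the paper writes the recursion as a contraction $|\mathrm{tot}_n-1|=\tfrac{1}{1+y_n}|\mathrm{tot}_{n+1}-1|\le\theta\,|\mathrm{tot}_{n+1}-1|$ (using the uniform lower bound on $y_n$), whereas you use the equivalent monotonicity $|\mathrm{tot}_0-1|\le|\mathrm{tot}_n-1|$; either form suffices once $\mathrm{tot}_n\to 1$ is known.
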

\begin{proof} For $\zeta \in \Delta_{\mathcal{A}}$ and $n\in \mathbb{N}$  define
$$p_{n}(\zeta)= \frac{\sum_i e^{\omega^n_i}\zeta_i}{\sum_i \zeta_i}$$
Note that
$$p_n(\tilde{T}_n\zeta)= \frac{\sum_i e^{\omega^{n+1}_i}\zeta_i+e^{\omega^{n}_{\alpha^n(1)}}\zeta_{\alpha^n(1-\varepsilon^n)}}{\sum_i \zeta_i+e^{\omega^{n}_{\alpha^n(1)}}\zeta_{\alpha^n(1-\varepsilon^n)}}.$$
One can easily verify that
\begin{equation}\label{conv.maluca} |p_n(\tilde{T}_n\zeta) -1|= \frac{\sum_i \zeta_i}{\sum_i \zeta_i+ e^{\omega^{n}_{\alpha^n(1)}}\zeta_{\alpha^n(1-\varepsilon^n)}} |p_{n+1}(\zeta) -1|\end{equation}
By Lemma \ref{log.deriv} and \cite[Lemma 3.5]{cunhasmania} we have that $\sup_{n,\alpha} |\omega^n_\alpha| < \infty$. We have that  $\tilde{\zeta}^n \in K_{c}$  for every $n$. In particular
$$\sup_n \big|\frac{\sum_i \tilde{\zeta}_i^{n+1}}{\sum_i \tilde{\zeta}^{n+1}_i+ e^{\omega^{n}_{\alpha^n(1)}}\tilde{\zeta}^{n+1}_{\alpha^n(1-\varepsilon^n)}}\big|=\theta < 1.$$
By \cite{cunhasmania} we have that $R^nf$ is almost an affine g.i.e.m. so
$$\lim_n \frac{\sum_{i \in \mathcal{A}} e^{L^n_i} \zeta^n_i }{\sum_{i \in \mathcal{A}} \zeta^n_i} =1.$$
By Lemma \ref{log.deriv}  and Lemma \ref{vp} it follows that
$$\lim_n p_n(\tilde{\zeta}^n)=1.$$
By (\ref{conv.maluca})
$$|\sum_{i\in \mathcal{A}} e^{\omega_i}\tilde{\zeta}^0_i  -1|=|p_0(\tilde{\zeta}^0)-1|\leq \theta^n | p_n(\tilde{\zeta}^n)-1| \rightarrow_n 0.$$
\end{proof}

\begin{pro}{(See also \cite[Proposition 2.3]{mmy})}\label{affine} There is an unique  affine g.i.e.m. $f_A$ with domain $[0,1]$, whose combinatorics is $\{\pi^i,\ve^i\}_{i\in\N}$ and the slope vector is $\omega$. \end{pro}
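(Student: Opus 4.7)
My plan is to take $f_A$ to be the affine g.i.e.m. with combinatorial data $\pi^0$, partition vector $\tilde\zeta^0$, and slopes $e^{\omega_\alpha}$ on the subinterval $I^0_\alpha$. The well-posedness of $f_A$ as a bijection $[0,1] \to [0,1]$ reduces to two normalization identities: $\sum_\alpha \tilde\zeta^0_\alpha = 1$, which is automatic since $\tilde\zeta^0 \in \Delta_\A$, and $\sum_\alpha e^{\omega_\alpha} \tilde\zeta^0_\alpha = 1$, which is exactly Proposition \ref{igualdade.maluca}. Together with the irreducibility of $\pi^0$, this gives a well-defined affine g.i.e.m.\ whose combinatorial data is $\pi^0$.

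Next I would prove by induction on $n$ that $R^n(f_A)$ is the affine g.i.e.m.\ with combinatorial data $\pi^n$, partition vector $\tilde\zeta^n$ (after normalization), and slope vector $\omega^n$. The update of slopes $\omega^{n+1} = \Theta_n \omega^n$ is the very definition of the sequence $(\omega^n)$ and matches \eqref{der1}--\eqref{der2}. The update of the projective length vector to $\tilde\zeta^{n+1}$ is exactly equation \eqref{comb.equal}, $\tilde T_n^\nor \tilde\zeta^{n+1} = \tilde\zeta^n$, which inverts the length step of the Rauzy-Veech algorithm with type $\ve^n$. Preservation of the identity $\sum_\alpha e^{\omega^n_\alpha} \tilde\zeta^n_\alpha = 1$ is an algebraic check on the pair $(\tilde T_n, \Theta_n)$ (the matrix $\tilde T_n$ was designed precisely to encode one inverse affine-Rauzy-Veech step).

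The only delicate point is that the Rauzy-Veech type of $R^n(f_A)$ really is $\ve^n$, which is equivalent to the strict inequality $\tilde\zeta^n_{\alpha^n(0)} \gtrless e^{\omega^n_{\alpha^n(1)}}\tilde\zeta^n_{\alpha^n(1)}$ going in the direction prescribed by $\ve^n$. I expect this to be the main obstacle, but it is in fact free from the construction: inverting \eqref{comb.equal} gives $\tilde\zeta^{n+1}$ proportional to $(\tilde T_n)^{-1}\tilde\zeta^n$, and a direct computation shows that the coordinate of $(\tilde T_n)^{-1}\tilde\zeta^n$ at the loser is precisely $\tilde\zeta^n_{\alpha^n(0)} - e^{\omega^n_{\alpha^n(1)}}\tilde\zeta^n_{\alpha^n(1)}$ (or its sign-flipped analogue for type $1$). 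Since we already know $\tilde\zeta^{n+1} \in K_{c_0}$ has all entries bounded below by $c_0 > 0$, this quantity is strictly positive, which is exactly the type-$\ve^n$ inequality. Hence the inductive step closes.

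For uniqueness, suppose $f_A'$ is another affine g.i.e.m.\ on $[0,1]$ with the prescribed combinatorics and slope vector $\omega$, with partition vector $\lambda^0 \in \Delta_\A$. Running Rauzy-Veech on $f_A'$ produces a sequence $\lambda^n \in \Delta_\A$ satisfying $\lambda^0 = \T_0^{n-1}\lambda^n$ projectively for every $n$. By the contraction estimate $\mathrm{diam}\,\T_0^n \Delta_\A \leq C\kappa^{n/(k(2d-3))}\to 0$ established in the preceding subsection, the intersection $\bigcap_n \T_0^n \Delta_\A$ reduces to the singleton $\{\tilde\zeta^0\}$, so $\lambda^0$ is projectively equal to $\tilde\zeta^0$. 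The normalization $\lambda^0 \in \Delta_\A$ then pins down the scale and yields $\lambda^0 = \tilde\zeta^0$, so $f_A' = f_A$.
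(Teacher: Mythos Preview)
Your proposal is correct and follows essentially the same approach as the paper: existence via the partition vector $\tilde{\zeta}^0$ together with Proposition~\ref{igualdade.maluca}, the combinatorics being inherited from \eqref{comb.equal}, and uniqueness from the fact that $\bigcap_{n\geq 0}\T_0^n\Delta_\A=\{\tilde{\zeta}^0\}$. You spell out in detail the type-$\ve^n$ verification (positivity of the relevant coordinate of $\tilde{\zeta}^{n+1}$ forces the correct Rauzy--Veech inequality) that the paper absorbs into the one-line citation of \eqref{comb.equal}; note only that the coordinate of $(\tilde T_n)^{-1}\tilde{\zeta}^n$ that equals $\tilde{\zeta}^n_{\alpha^n(0)}-e^{\omega^n_{\alpha^n(1)}}\tilde{\zeta}^n_{\alpha^n(1)}$ is the \emph{winner} coordinate, not the loser, though this does not affect the argument.
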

\begin{proof}  It follows from Proposition \ref{igualdade.maluca} that the  slope vector $\omega$ and partition vector $\tilde{\zeta}^0$ defines an affine g.i.e.m.  By (\ref{comb.equal})
such affine g.i.e.m. has  combinatorics $\{\pi^i,\ve^i\}_{i\in\N}$.  To show the uniqueness, note that if $g\colon [0,1]\rightarrow [0,1]$ is an affine g.i.e.m. with slope $\omega$ and combinatorics $\{\pi^i,\ve^i\}_{i\in\N}$ then  the partition vectors $\hat{\zeta}^n$ of $R^ng$ satisfies $ \tilde{T}_n^\nor\hat{\zeta}^{n+1}=\hat{\zeta}^{n}$. In particular
$$\hat{\zeta}^0 \in   \bigcap_{n\geq 0} \T_{0}^{n}\Delta_\A =\{ \tilde{\zeta}^0 \}. $$\end{proof}


\begin{rem}  For each $\omega$ that is a sum of the vector given by Lemma \ref{log.deriv} and a vector in $E^s_0$ we  constructed   the unique  affine interval exchange map $f_A$ given by Proposition \ref{affine}. Each one of these affine interval exchange maps  is   called a {\it weak affine model} of $f$. So the weak affine model is not unique.  \end{rem}

From now on we assume without loss of generality that $f$ has only one descontinuity that will be denoted by $\partial I_{\al^*}.$

\begin{lemma}
\label{afim.break}
Suppose that $f\in\mathcal{B}^{2+\nu}_{k}$ satisfies $\int_0^1D^2(f)(s)/ Df(s)ds=0$ and let $f_A$ be a weak  affine model of $f$. Then $BP_f(\partial I_\alpha)=BP_{f_A}(\partial \tilde{I}_\al)$ for all $\al\in\A$ such that $\al\not=\al^*$ and $\pi_0(\al)>1.$
\end{lemma}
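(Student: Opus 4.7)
The plan is to identify $BP_f(\partial I_\alpha)$ with the break of $R^nf$ at a suitable partition point of $I^n$, and then exploit the $C^2$-convergence of renormalizations to the affine model from Theorem \ref{teo1} to conclude equality with $BP_{f_A}(\partial \tilde I_\alpha)$.

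The first and main step is to establish a propagation of breaks statement: for each $\alpha \in \A$ with $\alpha \neq \alpha^*$ and $\pi_0(\alpha)>1$, and for each $n$ large enough, there is a letter $\gamma=\gamma_n(\alpha) \in \A$ with $\pi_0^n(\gamma)>1$ such that $\partial I_\gamma^n$ lies on the forward $f$-orbit of $\partial I_\alpha$ and the segment of this orbit between them meets no other original break point of $f$. Writing $R^nf=f^{q_n^\gamma}$ on $I_\gamma^n$ and $R^nf=f^{q_n^\delta}$ on the left neighbor $I_\delta^n$, the chain rule and cancellation of the common factors along the shared portions of the two one-sided orbits issuing from $\partial I_\gamma^n$ give
\[
BP_{R^nf}(\partial I_\gamma^n)=BP_f(\partial I_\alpha).
\]
Because $f$ and $f_A$ share combinatorics, the same identification applies to $f_A$ verbatim, yielding $BP_{R^nf_A}(\partial \tilde I_\gamma^n)=BP_{f_A}(\partial \tilde I_\alpha)$.

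The second step is to compare across models. Since $R^nf_A$ is affine on $\tilde I_\gamma^n$ with slope $e^{\omega^n_\gamma}$ (and similarly on $\tilde I_\delta^n$ with slope $e^{\omega^n_\delta}$, where $\pi_0^n(\delta)=\pi_0^n(\gamma)-1$), one has $BP_{R^nf_A}(\partial \tilde I_\gamma^n)=e^{\omega^n_\delta-\omega^n_\gamma}$. By Theorem \ref{teo1} we have $d_{C^2}(R^nf,R^nf_A)=\O(\la^{\sqrt n})$, and the partition lengths of $R^nf$ converge to those of $R^nf_A$ at the same rate; unwinding the definition of the zoom $Z$ in the distance $d_{C^2}$ shows that the one-sided derivatives of $R^nf$ at $\partial I_\gamma^n$ match those of $R^nf_A$ at $\partial \tilde I_\gamma^n$ with an error $\O(\la^{\sqrt n})$. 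Combining with the propagation identity,
\[
BP_f(\partial I_\alpha)=BP_{R^nf}(\partial I_\gamma^n)=BP_{R^nf_A}(\partial \tilde I_\gamma^n)+\O(\la^{\sqrt n})=BP_{f_A}(\partial \tilde I_\alpha)+\O(\la^{\sqrt n}).
\]
Since the left-hand side is independent of $n$, letting $n\to\infty$ yields the desired equality.

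The main obstacle is the propagation step: one must track how the original break points of $f$ become partition points of $I^n$ under Rauzy-Veech induction, and verify that along the connecting orbit no further original break of $f$ is encountered, so that the chain rule really produces a single factor equal to $BP_f(\partial I_\alpha)$ rather than a product over several intermediate breaks. The genus-one hypothesis (only one genuine discontinuity $\partial I_{\alpha^*}$) together with the $k$-bounded combinatorics enter precisely here, to guarantee the simple inherited structure of all non-discontinuity breaks under iteration of $R$.
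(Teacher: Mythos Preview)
Your overall strategy---propagate the original break $\partial I_\alpha$ forward to a partition point $\partial I_\gamma^n$ of a deep renormalization, then compare with the affine model via the exponential convergence of renormalizations---is exactly the one in the paper. The comparison step (your second step) is fine: invoking Theorem~\ref{teo1} is logically clean since Lemma~\ref{afim.break} is not used in its proof, and it amounts to the paper's direct appeal to Lemma~\ref{log.deriv} together with Theorem~3 of \cite{cunhasmania}.

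The genuine gap is in the propagation step. Your chain-rule cancellation presupposes that the two one-sided derivatives at $\partial I_\gamma^n$ come from the \emph{same} number of iterates of $f$, i.e.\ that the return times $q_n^\delta$ and $q_n^\gamma$ of the adjacent intervals $I_\delta^n$ and $I_\gamma^n$ coincide. For a generic Rauzy--Veech time $n$ this fails: after each induction step only the loser's return time is updated, so neighbouring intervals typically have different return times. If $q_n^\delta\neq q_n^\gamma$ the ratio
\[
\frac{D_-R^nf(\partial I_\gamma^n)}{D_+R^nf(\partial I_\gamma^n)}
=\frac{\prod_{i=0}^{q_n^\delta-1} D_-f(f^i(\partial I_\gamma^n))}{\prod_{i=0}^{q_n^\gamma-1} D_+f(f^i(\partial I_\gamma^n))}
\]
acquires extra derivative factors from the longer orbit that do not cancel and have no reason to equal a single break of $f$. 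The paper fixes this by restricting to the subsequence $n_s$ of \emph{rotation renormalization} times (those with $R^s_{\rm rot}f=R^{n_s}f$), at which adjacent return times agree: for $\pi_0^{n_s}(\beta)+1=\pi_0^{n_s}(\gamma)$ one has $q^{n_s}_\beta=q^{n_s}_\gamma$. Only then does the product telescope to $\prod_i BP_f(f^i(\partial I_\gamma^{n_s}))$, which reduces to the single factor $BP_f(\partial I_\alpha)$ by the no-connection hypothesis (uniqueness of the break hit along the orbit) together with a short check that the hit is neither at the discontinuity $\partial I_{\alpha^*}$ nor at the left endpoint. You flag the propagation as ``the main obstacle'' but do not supply this equal-return-time ingredient; without it the displayed identity $BP_{R^nf}(\partial I_\gamma^n)=BP_f(\partial I_\alpha)$ is unjustified.
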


\dem
Denote by $n_s$ the sequence of times of renormalization such that $R^s_{\rot}f=R^{n_s}f$ and note that for all $s>0,$ $\partial I^{n_s}_{\al^*}$ is the unique point of descontiuity of $f.$ Let $\gamma\in\mathcal{A}$ such that $\gamma\not=\al^*$ and $\pi_0^{n_s}(\gamma)>1.$ As $f$ has no connection we have that there is a unique $1\leq j_{\gamma}<q^{n_s}_\gamma$ and unique $\alpha\in\A$ such that $f^{j_\gamma}(\partial I^{n_s}_\gamma)=\partial I_\al.$

We claim that $\al\not=\al^*$ and $\pi_0(\al)>1.$ In fact, if $f^{j_\gamma}(\partial I^{n_s}_{\gamma})=\partial I_{\al^*}$ then $f^{j_\gamma+1}(\partial I^{n_s}_{\gamma})=0=f^{q^{n_s}_{\al^*}}(\partial I^{n_s}_{\al^*})$ which is absurd because $f$ has no connection. If $\pi_0(\al)=1$ then $f^{j_\gamma}(\partial I^{n_s}_\gamma)=\partial I_\al=f(\partial I_{\al^*})$ which is absurd by the same reason as in  the previous case.

By definition of $n_s$ we have that for all $\beta\in\A$ such that $\pi_0^{n_s}(\beta)+1=\pi_0^{n_s}(\gamma)$ then $q^{n_s}_{\beta}=q^{n_s}_{\gamma}.$ Therefore
\begin{eqnarray}
\label{ig.break}
BP_{R^{n_s}f}(\partial I^{n_s}_{\gamma})&=&\ln D_-R^{n_s}f(\partial I^{n_s}_{\gamma})-\ln D_+R^{n_s}f(\partial I^{n_s}_{\gamma})\nonumber\\
& = & \sum_{i=0}^{q^{n_s}_\beta-1}\ln D_-f(f^i(\partial I^{n_s}_{\gamma}))- \sum_{i=0}^{q^{n_s}_\gamma-1}\ln D_+f(f^i(\partial I^{n_s}_{\gamma}))\nonumber\\
&= & \sum_{i=0}^{q^{n_s}_\gamma-1}\Big(\ln D_-f(f^i(\partial I^{n_s}_{\gamma}))-\ln D_+f(f^i(\partial I^{n_s}_{\gamma}))\Big)\nonumber\\
&=&\ln D_-f(f^{j_\gamma}(\partial I^{n_s}_{\gamma}))-\ln D_+f(f^{j_\gamma}(\partial I^{n_s}_{\gamma}))\nonumber\\
& =& \ln D_-f(\partial I_\al)-\ln D_+f(\partial I_\al)\nonumber\\
& = & BP_f(\partial I_\al)
\end{eqnarray}

As $f$ and its affine model has the same combinatorics we have
$$BP_{R^{n_s}f_A}(\partial \tilde{I}^{n_s}_{\gamma})= BP_{f_A}(\partial \tilde{I}_\al).$$
Then
\begin{eqnarray*}
BP_f(\partial I_\al)- BP_{f_A}(\partial \tilde{I}_\al)& = & BP_{R^{n_s}f}(\partial I^{n_s}_{\gamma})-BP_{R^{n_s}f_A}(\partial \tilde{I}^{n_s}_{\gamma})\\
&=& \ln D_-R^{n_s}f(\partial I^{n_s}_{\gamma})-\ln D_+R^{n_s}f(\partial I^{n_s}_{\gamma})\\
& & -\ln D_-R^{n_s}f_A(\partial \tilde{I}^{n_s}_{\gamma})+\ln D_+R^{n_s}f_A(\partial \tilde{I}^{n_s}_{\gamma})\\
& =& L_{n_s,\beta}-L_{n_s,\gamma}+\mathrm{O}(\la^{\sqrt{n_s}})-\omega^{n_s}_\beta+\omega^{n_s}_\gamma\\
&=& \mathrm{O}(\la^{\sqrt{n_s}}),\; \text{ by Proposition \ref{log.deriv}}.
\end{eqnarray*}

\cqd

The Lemma \ref{afim.break}  gives us that $f$ and $f_A$ have $d-2$ identical  breaks.
\begin{proof}[Proof of Theorem \ref{teo2}]
For simplicity we assume $\A=\{1,2,...,d\}$ and denote by $j_0\in\A$ the letter such that $\partial I_{j_0}$ is the descontinuity of $f_A.$ As $f$ and $g$ are break-equivalents, by the Lemma \ref{afim.break} we have
$$\omega_{i+1}^f-\omega_i^f=\omega_{i+1}^g-\omega_i^g\; \text{ for every }\; i\in\A\;\text{ such that }\;i\not=j_0-1,d,$$
which is equivalent to
$$\omega_1^f-\omega_1^g=...= \omega_{j_0-1}^f-\omega_{j_0-1}^g,$$
$$\omega_{j_0}^f-\omega_{j_0}^g=...= \omega_{d}^f-\omega_{d}^g,$$
where we choose  $\omega^f=(\omega^f_i)_{i\in\A}\in E^c_{0,\infty}$ and $\omega^g=(\omega^g_i)_{i\in\A}\in E^c_{0,\infty}$ as  the slope-vectors of the weak affine models $f_A$ e $g_A$ respectively.

Denoting by $v:=\omega_1^f-\omega_1^g$ and by $\tilde{v}:=\omega_{j_0}^f-\omega_{j_0}^g$ we have that
$$\omega^f-\omega^g=(\omega_i^f-\omega_i^g)_{i\in\A}=(v,...,v,\underbrace{\tilde{v}}_{j_0-\text{position}},...,\tilde{v})\in E^c_{0,\infty},$$
that is, the vector $\omega^f-\omega^g$ can be viewed as the slope-vector of a affine interval exchange maps with two intervals. So we have $(v,\tilde{v})\in E^c_{0,\infty}(2),$ where $E^c_{0,\infty}(2)$ is the central space defined by the renormalization of two intervals.  As $\dim E^c_{0,\infty}(2)=0$ we have $v=\tilde{v}=0$ and then $\omega^f=\omega^g.$ By Proposition \ref{affine} we have that $f_A=g_A$.
\end{proof}

The  next result estimate the distance between the image partition vectors of  $R^nf$ and  $R^nf_A.$

\begin{lemma}
\label{vpi}
Suppose that $f\in\mathcal{B}^{2+\nu}_{k}$ satisfies $\int_0^1D^2(f)(s)/ Df(s)ds=0$ and let $f_A$ be  a weak affine model of $f$. Then
$$\left||R^nf(I^n_\al)|-|R^nf_A(\tilde{I}^n_\al)|\right|=\O(\la^{\sqrt{n}})\; \text{ for all } \al\in\A.$$
\end{lemma}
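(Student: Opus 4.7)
The plan is to reduce the image-length comparison to the domain-length comparison already obtained in Lemma \ref{vp}, using distortion to pass from individual derivatives to the average log-derivative $L^n_\al$, and then Lemma \ref{log.deriv} to identify $L^n_\al$ with the slope $\omega^n_\al$ of the affine model up to an error $\O(\la^{\sqrt{n}})$.

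First I would write, by the Mean Value Theorem, $|R^nf(I^n_\al)|=Df^{q^n_\al}(\xi^n_\al)\,|I^n_\al|$ for some $\xi^n_\al \in I^n_\al$, and invoke the bounded distortion estimate (Theorem~3 of \cite{cunhasmania}, the same tool used in the proof of Proposition~\ref{L}) to conclude that $\ln Df^{q^n_\al}(\xi^n_\al)=L^n_\al+\O(\la^{\sqrt n})$. On the affine side, by definition $|R^nf_A(\tilde I^n_\al)|=e^{\omega^n_\al}\,|\tilde I^n_\al|$. Therefore
\begin{equation*}
|R^nf(I^n_\al)|-|R^nf_A(\tilde I^n_\al)|
=\bigl(e^{L^n_\al+\O(\la^{\sqrt n})}-e^{\omega^n_\al}\bigr)\,|I^n_\al|
+e^{\omega^n_\al}\bigl(|I^n_\al|-|\tilde I^n_\al|\bigr).
\end{equation*}
By Lemma \ref{log.deriv} we have $|L^n_\al-\omega^n_\al|=\O(\la^{\sqrt n})$, and by \cite[Lemma 3.5]{cunhasmania} together with Proposition \ref{quasi.iso} the quantities $\omega^n_\al$ are uniformly bounded in $n$ and $\al$. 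This makes the factor $e^{L^n_\al+\O(\la^{\sqrt n})}-e^{\omega^n_\al}$ of order $\O(\la^{\sqrt n})$, and since $|I^n_\al|\leq 1$ the first summand is $\O(\la^{\sqrt n})$.

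For the second summand, the partition vectors $\zeta^n$ and $\tilde\zeta^n$ are the normalized lengths of $I^n_\al$ and $\tilde I^n_\al$ respectively, and Lemma \ref{vp} gives $|\zeta^n-\tilde\zeta^n|_1=\O(\la^{\sqrt{n/2}})$. After unfolding the normalization (a uniformly bounded factor, since $|I^n|$ and $|\tilde I^n|$ stay away from zero in the normalized setting used to define the partition vector in $\Delta_{\mathcal A}$), one obtains $\bigl||I^n_\al|-|\tilde I^n_\al|\bigr|=\O(\la^{\sqrt{n/2}})$, and multiplying by the uniformly bounded $e^{\omega^n_\al}$ keeps the estimate of the same order. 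Combining the two contributions yields the conclusion with the (possibly smaller) base $\la^{1/2}$ playing the role of $\la$ in the statement; absorbing the exponent $1/2$ into a redefinition of $\la\in(0,1)$ gives exactly $\O(\la^{\sqrt n})$.

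The only real obstacle is bookkeeping: ensuring the normalization conventions for $|I^n_\al|$ versus $\zeta^n_\al$ are consistent between both sides, and that the bounded-distortion error and the central-direction error both stay of the prescribed order when multiplied by the uniformly bounded exponentials $e^{\omega^n_\al}$. Once these uniform bounds (already established in earlier sections) are invoked, the argument is a short calculation.
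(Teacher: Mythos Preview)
Your proof is correct and is essentially a fleshed-out version of the paper's own argument, which is the one-line statement ``It follows from Lemma \ref{log.deriv} and Lemma \ref{vp}.'' You have unpacked precisely how those two lemmas combine: bounded distortion to replace the derivative at a point by $e^{L^n_\al}$, Lemma \ref{log.deriv} to pass from $L^n_\al$ to $\omega^n_\al$, and Lemma \ref{vp} for the domain-length comparison. One minor remark: since the lengths entering $d_{C^2}$ are already the normalized partition vectors in $\Delta_\A$, your paragraph about ``unfolding the normalization'' is unnecessary---$|I^n_\al|$ and $|\tilde I^n_\al|$ can be read directly as $\zeta^n_\al$ and $\tilde\zeta^n_\al$, and Lemma \ref{vp} applies without any rescaling.
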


\dem
It follows  from  Lemma \ref{log.deriv} and  Lemma \ref{vp}.
\cqd

Now note that by Theorem 3 of \cite{cunhasmania} we have
\begin{eqnarray}
\label{first.parcel}
\|Z_{I^n_\al}R^nf-Z_{\tilde{I}^n_\alpha}R^nf_A\|_{C^2}=\O(\la^{\sqrt{n}}), \text{ for all } n\geq0.
\end{eqnarray}

Theorem \ref{teo1} follows from \eqref{first.parcel}, Lemma \ref{vp} and Lemma \ref{vpi}.

\begin{proof}[Proof of Theorem \ref{teo3}.]
Let $f$ and $g$ as in the assumptions  of Theorem \ref{teo3}. Then by Theorem \ref{teo2} we have that $f_A=g_A.$ Therefore

$$d_{C^2}(R^nf,R^ng)\leq d_{C^2}(R^nf,R^nf_A)+d_{C^2}(R^ng_A,R^ng)=\O(\lambda^{\sqrt{n}}).$$

\end{proof}

\section{ Smoothness of the conjugacy}

To simplify the statements, denote by $\mathcal{B}^{2+\nu}_{k,*}$ the set of all $f \in \mathcal{B}^{2+\nu}_{k}$ such that $\int_0^1D^2(f)(s)/ Df(s)ds=0$.

Let $f,g\in\mathcal{B}^{2+\nu}_{k,*}$ be a g.i.e.m. with the  the same combinatorics. Then there is a orientation preserving homeomophism $h:[0,1)\to [0,1)$ that conjugates $f$ and $g$, that is,
\begin{eqnarray}
\label{conj}
g\circ h= h\circ f,
\end{eqnarray}
such that $h$ maps  break points of $f$ into  break points of $g.$

\subsection{Cohomological equation} The {\it cohomological equation}  associated to \eqref{conj} is
\begin{eqnarray}
\label{coh}
\ln Dg\circ h-\ln Df=\psi\circ f -\psi,
\end{eqnarray}
where $\psi: [0,1]\to \mathbb{R}$ is called the solution of \eqref{coh} if it exists.

For all $x\in[0,1)$ define $i_n(x):=\min\{i\geq0; f^i(x)\in I^n\}.$ Let
\begin{eqnarray*}
\psi_n(x)&:=&\sum_{i=0}^{i_n(x)-1}\ln Df(f^i(x))-\ln Dg (h\circ f^i(x)).
\end{eqnarray*}

\begin{lemma}
\label{unifor} Let $f,g\in\mathcal{B}^{2+\nu}_{k,*}$ be  g.i.e.m. with the  same combinatorics  and they admit the same weak affine model (they are not necessarily  break-equivalents). Then the  sequence $\psi_n$ is uniformly convergent. Indeed
\begin{equation}\label{estmaluca} |\psi_{n+1}(x)-\psi_n(x)|=\O(\la^{\sqrt{n}}).\end{equation}
\end{lemma}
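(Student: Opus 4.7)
The plan is to reduce each increment $\psi_{n+1}(x)-\psi_n(x)$ to a comparison of log-derivatives of $R^nf$ and $R^ng$ at corresponding points, and to control that comparison using Theorem \ref{teo1} through the common weak affine model $f_A$.

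First I would unfold the telescoping difference. Set $y := f^{i_n(x)}(x) \in I^n$. When $y\in I^{n+1}$ the increment is empty. Otherwise $y\in I^n\setminus I^{n+1}$, and by the standard Rauzy--Veech relation $q^{n+1}_{\alpha^n(1-\ve^n)} = q^n_{\alpha^n(\ve^n)}+q^n_{\alpha^n(1-\ve^n)}$, a single $R^nf$-iterate suffices to return from $y$ to $I^{n+1}$, so $i_{n+1}(x)-i_n(x) = q^n_\beta$ for $y\in I^n_\beta$. The conjugacy $h$ matches $f$ with $g$, and hence $R^nf$ with $R^ng$ on $I^n$ (it preserves the combinatorial partition), so the non-trivial case reduces to
\[
\psi_{n+1}(x)-\psi_n(x) = \ln D R^n f(y) - \ln D R^n g(h(y)).
\]

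Next I would decompose each log-derivative through its zoom. For $y\in I^n_\alpha$,
\[
\ln D R^n f(y) = \ln\frac{|R^nf(I^n_\alpha)|}{|I^n_\alpha|} + \ln D(Z_{I^n_\alpha}R^nf)(y'),
\]
where $y'\in[0,1]$ is the image of $y$ under the affine rescaling $I^n_\alpha\to[0,1]$, and analogously for $g$ at $h(y)$. Since $f$ and $g$ share the weak affine model $f_A$, Theorem \ref{teo1} places both $R^nf$ and $R^ng$ within $\O(\la^{\sqrt n})$ of $R^nf_A$ in the distance $d_{C^2}$. Because $R^nf_A$ restricted to each $\tilde I^n_\alpha$ is affine, its zoom is the identity, so the $C^2$-closeness gives that $\|\ln D(Z_{I^n_\alpha}R^nf)\|_{C^0}$ and $\|\ln D(Z_{I^n_\alpha}R^ng)\|_{C^0}$ are each $\O(\la^{\sqrt n})$, regardless of the normalized evaluation point. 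For the prefactor, the ratio $|R^nf(I^n_\alpha)|/|I^n_\alpha|$ equals the quotient of the normalized image-partition and partition vectors of $R^nf$; by Theorem \ref{teo1} both of these for $f$ and both for $g$ lie within $\O(\la^{\sqrt n})$ of the affine-model vectors, whose ratio equals $e^{\omega^n_\alpha}$. The $k$-bounded combinatorics keeps all such ratios bounded away from $0$ and $\infty$, so the difference of logarithms is still $\O(\la^{\sqrt n})$. Combining the two estimates yields the pointwise bound $|\psi_{n+1}(x)-\psi_n(x)|=\O(\la^{\sqrt n})$, uniform in $x$.

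Finally, uniform convergence of $\{\psi_n\}$ follows because $\sum_n \la^{\sqrt n}<\infty$ (the substitution $u=\sqrt n$ reduces this to a finite integral $\int 2u\,e^{u\ln\la}du$), which makes the partial sums uniformly Cauchy. The main technical issue is the derivative-decomposition step: one must verify that the length prefactors, which individually involve lengths shrinking to $0$, contribute only $\O(\la^{\sqrt n})$ once compared through $f_A$, and that the zoom derivatives, evaluated at the genuinely distinct normalized points associated to $y$ and $h(y)$, each lie $\O(\la^{\sqrt n})$-close to $0$ so that they combine additively in the logarithm without any loss of rate.
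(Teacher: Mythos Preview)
Your argument is correct and follows essentially the same route as the paper: both reduce $\psi_{n+1}(x)-\psi_n(x)$ to $\ln DR^nf(y)-\ln DR^ng(h(y))$ for $y=f^{i_n(x)}(x)$, and then show this is $\O(\la^{\sqrt n})$ by comparing each side to the common affine model. The only cosmetic difference is that the paper cites Lemma~\ref{log.deriv} directly (together with the pointwise estimate from \cite{cunhasmania}) to get $\ln DR^nf(y)=\omega^n_\alpha+\O(\la^{\sqrt n})$ and likewise for $g$, whereas you package the same information through Theorem~\ref{teo1} and the zoom decomposition; since sharing a weak affine model forces the central slope vectors of $f$ and $g$ to coincide, both routes land on the same $\omega^n_\alpha$ and the conclusion follows.
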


\dem
We will show that $\psi_n$ is Cauchy. Suppose that $i_n(x)<i_{n+1}(x).$ Then $i_{n+1}(x)=i_n(x)+q^n_{(\pi_0^n)^{-1}(d)}.$ Therefore

$$
\begin{array}{l}
\psi_{n+1}(x)-\psi_n(x) =  \displaystyle\sum_{i=i_{n}(x)}^{i_{n+1}(x)-1}\ln Df(f^{i}(x))-\ln Dg(h\circ f^{i}(x))\vspace{0.3cm}\\
=  \ln Df^{i_{n+1}(x)-i_{n}(x)}(f^{i_{n}(x)}(x))-\ln Dg^{i_{n+1}(x)-i_{n}(x)}(h\circ f^{i_{n}(x)}(x))\vspace{0.3cm}\\
=   \ln Df^{q^n_{(\pi_0^n)^{-1}(d)}}(f^{i_{n}(x)}(x))-\ln Dg^{q^n_{(\pi_0^n)^{-1}(d)}}(h\circ f^{i_{n}(x)}(x))\vspace{0.3cm}\\
=\ln DR^nf(f^{i_{n}(x)}(x))-\ln DR^ng(h\circ f^{i_{n}(x)}(x))\vspace{0.3cm}\\
=\O(\la^{\sqrt{n}}),\; \text{ by Proposition }\ref{log.deriv}.
\end{array}
$$

\cqd

Let $\psi(x)=\lim_{n \to \infty}\psi_n(x).$

\begin{lemma}\label{sol.coh} Let $f,g\in\mathcal{B}^{2+\nu}_{k,*}$ be  g.i.e.m. with the  same combinatorics. Assume that  they are   break-equivalents. Then $\psi:[0,1)\to\mathbb{R}$ is continuous and it is the solution of \eqref{coh}.
\end{lemma}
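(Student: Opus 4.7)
The plan is to exploit Theorem \ref{teo2} in order to place this lemma inside the regime of Lemma \ref{unifor}. Since $f$ and $g$ share combinatorics, are break-equivalent, and both satisfy the zero mean nonlinearity hypothesis, Theorem \ref{teo2} yields $f_A = g_A$; so $f$ and $g$ admit a common weak affine model. Hence the hypotheses of Lemma \ref{unifor} are met, giving the uniform estimate $|\psi_{n+1}(x)-\psi_n(x)| = \O(\la^{\sqrt{n}})$, and in particular $\psi_n \to \psi$ uniformly on $[0,1)$.

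Next I would derive the cohomological equation by telescoping. Setting $\phi(y) := \ln Df(y) - \ln Dg(h(y))$, we have $\psi_n(x) = \sum_{i=0}^{i_n(x)-1}\phi(f^i(x))$. For any $x\notin I^n$ with $i_n(x)\ge 1$, one has $i_n(f(x)) = i_n(x)-1$, hence
$$\psi_n(x) - \psi_n(f(x)) = \phi(x), \qquad \text{i.e.}\qquad (\psi_n\circ f)(x) - \psi_n(x) = \ln Dg(h(x)) - \ln Df(x).$$
Since $f$ has no connections we have $|I^n|\to 0$, so every $x\in (0,1)$ eventually lies outside $I^n$; passing to the uniform limit yields equation \eqref{coh} pointwise on $(0,1)$.

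The more delicate point is continuity of $\psi$. The crucial input is break-equivalence: since $h$ is an orientation-preserving homeomorphism sending $BP_f$ onto $BP_g$ with $BP_f(z) = BP_g(h(z))$, for every $z\in BP_f$ we have
\begin{align*}
\phi(z_-) - \phi(z_+) &= \bigl(\ln Df(z_-)-\ln Df(z_+)\bigr) - \bigl(\ln Dg(h(z)_-)-\ln Dg(h(z)_+)\bigr) \\
&= \ln BP_f(z) - \ln BP_g(h(z)) = 0.
\end{align*}
Thus $\phi$ extends continuously across every break point and its only residual discontinuities lie in the finite set $D$ consisting of the at-most-two g.i.e.m.\ cut points of $f$ together with $h^{-1}$ of those of $g$ (which coincide because $h$ is a conjugacy). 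Each $\psi_n$ is then continuous on $[0,1)$ off the finite subset $\bigcup_{i=0}^{i_n(\cdot)-1}f^{-i}(D)$, and uniform convergence transfers continuity to $\psi$ off the countable and nowhere-dense set $\bigcup_{n,i} f^{-i}(D)$. Finally, I would upgrade to continuity on all of $[0,1)$ by combining the anchor $\psi_n|_{I^n}\equiv 0$ (controlling behavior near the left endpoint) with the already-established identity $\psi = \phi + \psi\circ f$, which propagates continuity along $f$-orbits.

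The main obstacle is precisely this last step: one must verify that the jumps exhibited by each partial sum $\psi_n$ at the boundaries of level-$n$ cylinders actually cancel in the limit, rather than accumulating into a residual discontinuity of $\psi$. The cancellation rests on the matching one-sided-limit identity $\phi(z_-) = \phi(z_+)$ displayed above, which is exactly the analytic content of the break-equivalence assumption; without it, the limit function $\psi$ would be only piecewise continuous and the lemma would fail.
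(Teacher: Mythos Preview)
Your approach is essentially the paper's: invoke Theorem~\ref{teo2} to obtain a common weak affine model, apply Lemma~\ref{unifor} for uniform convergence, telescope to get \eqref{coh}, and use break-equivalence to kill jumps of $\phi$ at break points. That part is fine.

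The gap is exactly where you flag it, and your proposed resolution does not close it. You write that ``each $\psi_n$ is continuous off $\bigcup_i f^{-i}(D)$'' because $\phi$ is continuous off $D$. This overlooks the second source of discontinuity of $\psi_n$: the first-return time $i_n(\cdot)$ itself jumps at boundaries of $\mathcal{P}^n$, so the \emph{number of summands} changes, not just the value of an individual summand. Continuity of $\phi$ says nothing about this. Your suggestion to ``propagate continuity along $f$-orbits via $\psi=\phi+\psi\circ f$'' is circular here: that identity was derived only at points where $i_n(f(x))=i_n(x)-1$, and at a boundary point of $\mathcal{P}^n$ this relation fails on one side.

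The paper handles this with a four-case analysis. The delicate case is when $f^{k_0+1}(x)=0$ for some $k_0$ (Case~iii): then $i_n(x_+)=k_0+1$ while $i_n(x_-)=i_n(f(0))+k_0+2$ for large $n$, so $\psi(x_+)$ and $\psi(x_-)$ are given by sums of different lengths. One must compute both limits explicitly and show they agree. The key step is to first prove continuity at $0$ directly from the anchor $\psi_n|_{I^n}\equiv 0$ together with the tail estimate \eqref{estmaluca} (Case~ii), then use density of ``generic'' points (Case~i) to obtain the boundary identity $\ln Dg(0_+)-\ln Df(0_+)=\psi(f(0))$, and finally feed this plus break-equivalence at the cut point (which gives $\ln Df(1_-)-\ln Dg(1_-)=\ln Df(0_+)-\ln Dg(0_+)$) into the explicit formula for $\psi(x_-)$ to match it with $\psi(x_+)$. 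Case~iv, where the orbit hits a break point $\partial I_\beta$ but never $0$, is the one your $\phi(z_-)=\phi(z_+)$ computation actually handles, since there $i_n(x_+)=i_n(x_-)$ and only a single summand jumps.
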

\dem
It is easy to check that $\psi$ is solution of the \eqref{coh}.
Note that $i_n:[0,1)\to\mathbb{N}$ is continuous in the  interior of each element of the partition $\mathcal{P}^n.$ As a consequence $\psi_n$ is continuous in the interior of each element of the partition. Let $x \in [0,1]$. There are four cases. \\ \\
\noindent {\it Case i.} Suppose that $f^n(x)\not\in\cup_{\al\in\A}\partial I_\al$ for all $n$. Then $\psi_n$ is continuous at $x$ for all $n$ and by Lemma \ref{unifor} $\psi$ is continuous at $x.$ Moreover for large $n$ we have that $i_n(f(x))=i_n(x)-1$, so it is easy to see that $$\ln Dg\circ h(x)-\ln Df(x)=\psi_n\circ f(x) -\psi_n(x).$$
Taking the limit on $n$ we obtain \eqref{coh} for $x$ in {\it Case i.}  \\ \\
{\it Case ii.} Suppose that $x=0$. Then $\psi_n(0)=0$, so $\psi(0)=0$.  Let $y > 0$, with $y \in I^n$. Then $i_j(y)=0$ for every $j\leq n$, so  $\psi_j(y)=0$ for every $j\leq n$.  In particular
$$\psi_{n+p}(y)= \sum_{j=0}^{p} \psi_{n+j+1}(y)-\psi_{n+j}(y),$$
so by (\ref{estmaluca})
$$|\psi_{n+p}(y)|\leq C\sum_{j=0}^\infty \lambda^{\sqrt{n+j}} \rightarrow_n 0.$$
Consequently
$$\lim_n \sup_{y \in I^n} |\psi(y)| =0,$$
so $\psi$ is continuous at $x=0$.  \\ \\

\noindent {\it Case iii.} Suppose that there is $k_0\geq 0$ such that $f^{k_0+1}(x)=0$.  Note that $f(0)$ falls in Case i., so $\psi$ is continuous in it. Since there are not wandering intervals,  the points $x$ in {\it Case i.} are dense in $I$. Let $x_n$ be a sequence of points in {\it Case i.}  such that  $\lim_n x_n=0$.  Recall that $x_n$ satisfies
$$\ln Dg\circ h(x_n)-\ln Df(x_n)=\psi\circ f(x_n) -\psi(x_n).$$
Using Cases {\it i.} and {\it ii.}, we can take  the limit on  $n$  to obtain
\begin{equation}\label{igualdade} \ln Dg(0_+)-\ln Df(0_+)=\psi\circ f(0) -\psi(0)= \psi(f(0)).\end{equation}
Let $$n_0= \min \{ n\geq 0 \ s.t. \  f^i(x) \not\in I^n, \ for \ every \ i\leq k_0  \}$$
Then  $i_n(x_+)=k_0+1$ for every $n\geq n_0$. By (\ref{estmaluca}) it follows that
 $$\psi(x_+)=  \sum_{i=0}^{k_0}\ln Df(f^i(x))-\ln Dg (h\circ f^i(x)). $$
On the other hand, $i_n(x_-)=i_ {n}(f(0))+k_0+2$ for $n\geq n_0$.
 So by (\ref{igualdade})
 \begin{eqnarray*}
 \psi(x_-)&=& \sum_{i=0}^{k_0}\ln Df(f^i(x))-\ln Dg (h\circ f^i(x)) +  \ln Df(1_-)-\ln Dg(1_-) \\
 &+& \lim_n  \sum_{i=k_0+2}^{i_n(x_-)-1}\ln Df(f^i(x_-))-\ln Dg (h\circ f^i(x_-))\\
 &=&  \sum_{i=0}^{k_0}\ln Df(f^i(x))-\ln Dg (h\circ f^i(x)) +  \ln Df(1_-)-\ln Dg(1_-) \\
 &+& \lim_n \sum_{i=0}^{i_n(f(0))-1}\ln Df(f^i(f(0)))-\ln Dg (h\circ f^i(f(0))) \\
 &=& \sum_{i=0}^{k_0}\ln Df(f^i(x))-\ln Dg (h\circ f^i(x)) +  \ln Df(0_+)-\ln Dg(0_+) \\
 &+& \psi(f(0))\\
 &=& \sum_{i=0}^{k_0}\ln Df(f^i(x))-\ln Dg (h\circ f^i(x)) +  \ln Df(0_+)-\ln Dg(0_+) \\
 &+& \psi(f(0))=  \psi(x_+). \end{eqnarray*}

 We conclude that $\psi$ is continuous at $x$. \\

\noindent   {\it Case iv.} Now suppose that there is $k_0$ such that $f^{k_0}(x)=\partial I_\beta$ for some $\beta\in\A$, but $f^k(x)\not= 0$ for every $k$.  In particular $f$ is continuous at $f^k(x)$, for every $k$.  Then $i_n(x_+)=i_n(x_-)$ for every $n$ and
\begin{eqnarray*}
\psi(x_+)-\psi(x_-)&=&\lim_{n\to \infty} \psi_n(x_+)-\lim_{n\to \infty} \psi_n(x_-)\\
& = &\ln Df (f^{k_0}(x))- \ln Dg (h\circ f^{k_0}(x)) \\
& & -\ln Df ( f^{k_0}(x))+\ln Dg(h\circ f^{k_0}(x))\\
& = & \ln\frac{Df (f^{k_0}(x))}{Df(f^{k_0}(x))}-\ln\frac{Dg (h\circ f^{k_0}(x))}{Dg (h\circ f^{k_0}(x))}\\
& = & 0.
\end{eqnarray*}
\cqd

\subsection{Conjugacies} The next step is to show that the conjugacy $h$ is Lipschitz if $f$ and $g$ have the same weak affine model.
\begin{lemma}
\label{const} Let $f,g\in\mathcal{B}^{2+\nu}_{k,*}$ be  g.i.e.m. with   the same combinatorics  and they admit the same weak affine model (they are not necessarily  break-equivalents). Then there is $C_8>0$ such that for all $\beta\in\A$
$$\lim_{n\to\infty}\frac{|R^ng(h(I^n_{\beta}))|}{|R^nf(I^n_{\beta})|}=C_8,$$
and this convergence is uniform on $\beta$.
\end{lemma}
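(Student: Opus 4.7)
The plan is to reduce the ratio $\phi_n(\beta):=|R^ng(h(I^n_\beta))|/|R^nf(I^n_\beta)|$ to the scalar ratio $r_n:=|h(I^n)|/|I^n|$, up to errors that are uniform in $\beta$, and then show that $r_n$ is Cauchy via the Rauzy--Veech inductive step.

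First I would use bounded distortion for $f^{q^n_\beta}$ on $I^n_\beta$ and for $g^{q^n_\beta}$ on $h(I^n_\beta)$ (Theorem 3 of \cite{cunhasmania}) to write, uniformly in $\beta$,
$$|R^nf(I^n_\beta)|=e^{L^{f,n}_\beta}|I^n_\beta|\,(1+\O(\la^{\sqrt n})),\qquad |R^ng(h(I^n_\beta))|=e^{L^{g,n}_\beta}|h(I^n_\beta)|\,(1+\O(\la^{\sqrt n})),$$
where $L^{f,n}$ and $L^{g,n}$ are the mean log-derivative vectors defined by \eqref{mean-deriv}. Since $f$ and $g$ share the same weak affine model, their common slope vector $\omega^n$ allows Lemma~\ref{log.deriv} to be applied to each, giving $L^{f,n}_\beta-L^{g,n}_\beta=\O(\la^{\sqrt n})$. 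Hence
$$\phi_n(\beta)=\frac{|h(I^n_\beta)|}{|I^n_\beta|}\,(1+\O(\la^{\sqrt n}))=r_n\cdot\frac{\zeta^n_\beta(g)}{\zeta^n_\beta(f)}\,(1+\O(\la^{\sqrt n})),$$
where $\zeta^n(f),\zeta^n(g)\in\Delta_\A$ are the partition vectors of $R^nf$ and $R^ng$. By Lemma~\ref{vp} both are within $\O(\la^{\sqrt{n/2}})$ of $\tilde\zeta^n$, and the components $\tilde\zeta^n_\beta$ are uniformly bounded below (this is the $K_{c_0}$ bound from the projective-metric contraction argument). Combining these, I obtain $\phi_n(\beta)=r_n\,(1+\O(\la^{\sqrt{n/2}}))$ uniformly in $\beta$.

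The remaining task is to prove that $r_n$ converges. Each Rauzy--Veech step deletes from $I^n$ exactly one of the subintervals $R^nf(I^n_{\alpha^n(1)})$ (type $0$) or $I^n_{\alpha^n(0)}$ (type $1$), with relative length $t_n\in[c,1-c]$ for some $c>0$ uniform in $n$ thanks to the $k$-bounded combinatorics. The corresponding subinterval removed from $h(I^n)$ has length $r_n\,(1+\O(\la^{\sqrt{n/2}}))\,t_n\,|I^n|$, by the estimate just derived applied to the appropriate $\beta$. A routine algebraic manipulation then yields
$$\frac{r_{n+1}}{r_n}=1+\O(\la^{\sqrt{n/2}}),$$
with constants uniform in $n$. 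Since $\sum_n\la^{\sqrt{n/2}}<\infty$, the infinite product converges, and together with $r_0=1$ this gives $r_n\to C_8$ for some $C_8>0$. Combining the two steps yields $\phi_n(\beta)\to C_8$ uniformly in $\beta$.

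The main technical point I anticipate is checking that all the error constants in the first step are genuinely uniform in $\beta$, so that the resulting $\O(\la^{\sqrt{n/2}})$ in the recurrence for $r_n$ has a constant independent of $n$; this rests on the uniform bounded-distortion estimates for the iterates $f^{q^n_\beta}|_{I^n_\beta}$ proved in \cite{cunhasmania} and on the uniform lower bound for $\tilde\zeta^n_\beta$ obtained from the projective-metric contraction in the previous section. Positivity $C_8>0$ is then automatic from the product formula.
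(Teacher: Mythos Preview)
Your argument is correct and follows the same overall strategy as the paper: reduce the ratio $\phi_n(\beta)$ to the scalar $r_n=|h(I^n)|/|I^n|$ up to summable errors uniform in $\beta$, then show $r_n$ is Cauchy via the Rauzy--Veech recursion $|I^{n+1}|/|I^n|$ versus $|h(I^{n+1})|/|h(I^n)|$. The uniform lower bound on $1-t_n$ (needed to pass from the additive estimate to the multiplicative $r_{n+1}/r_n=1+\O(\la^{\sqrt{n/2}})$) follows from the bounded-geometry results in \cite{cunhasmania}, so that step is fine.

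The one structural difference is that the paper does not compare $f$ and $g$ directly. Instead it factors through the common weak affine model: it first proves the analogous limit for the pair $(f,f_A)$ via the conjugacy $\hat h$ (here the Mean Value Theorem is particularly clean because $DR^nf_A$ is exactly $e^{\omega^n_\beta}$, so no appeal to Lemma~\ref{log.deriv} is needed for $f_A$), obtains a constant $C_f$, then repeats the argument for $(f_A,g)$ to get $C_g$, and concludes with $C_8=C_f\cdot C_g$ since $h=\tilde h\circ\hat h$. Your route bypasses this factorization by invoking Lemma~\ref{log.deriv} twice (once for $f$, once for $g$) to control $L^{f,n}_\beta-L^{g,n}_\beta$ directly; this is slightly more economical and makes the role of the ``same weak affine model'' hypothesis more transparent, at the cost of relying on Lemma~\ref{log.deriv} rather than the simpler observation that the affine model has constant slopes.
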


\dem Let  $f_A$ be a weak  affine model of  $f$ and $g$. Let $\hat{h}$ be the  corresponding conjugacy, that is,  $f_A\circ \hat{h} = \hat{h} \circ f$. By the Mean Value Theorem
\begin{eqnarray}
\label{pri}
\ln \frac{|R^nf_A(\hat{h}(I^n_\beta))|}{|R^nf(I^n_\beta)|}&=&\ln\frac{DR^nf_A(\hat{h}(y))}{DR^nf(y)}+\ln\frac{|\hat{h}(I^n_\beta)|}{|I^n_\beta|}\nonumber\\
&=& \O(\la^{\sqrt{n}})+\ln\frac{|\hat{h}(I^n_\beta)|}{|I^n_\beta|}
\end{eqnarray}
Now we show that the second term converges. By Lemma \ref{vp} we have
\begin{eqnarray}
\label{co}
\frac{|I^n_\beta|}{|I^n|}=\frac{|\hat{h}(I^n_\beta)|}{|\hat{h}(I^n)|}(1+\O(\la^{\sqrt{n}}))
\end{eqnarray}
and
$$\frac{|R^nf(I^n_\beta)|}{|I^n|}=\frac{|R^nf_A(\hat{h}(I^n_\beta))|}{|h(I^n)|}(1+\O(\la^{\sqrt{n}}).$$
Now suppose that $R^nf$ is type 0. Then
\begin{eqnarray}
\label{comp}
\frac{|I^{n+1}|}{|I^n|}&=&\frac{|I^n|-|R^nf(I^n_{\al^n(1)})|}{|I^n|}\nonumber\\
&=&1-\frac{|R^nf(I^n_{\al^n(1)})|}{|I^n|}\nonumber\\
& = & 1-\frac{|R^nf_A(\hat{h}(I^n_{\al^n(1)}))|}{|\hat{h}(I^n)|}(1+\O(\la^{\sqrt{n}}))\nonumber\\
& = & \frac{|\hat{h}(I^{n+1})|}{|\hat{h}(I^n)|}+\O(\la^{\sqrt{n}}).
\end{eqnarray}
The case in which $R^nf$ is type 1 is  analogous. From \eqref{comp} we obtain

\begin{eqnarray}
\label{comp1}
\frac{|\hat{h}(I^{n+1})|}{|\hat{h}(I^n)|}=\left(1+\O(\la^{\sqrt{n}})\right)\cdot \frac{|I^{n+1}|}{|I^n|}.
\end{eqnarray}
Therefore

\begin{eqnarray*}
\ln\frac{|\hat{h}(I^{n})|}{|I^n|}&=&\ln\frac{|\hat{h}(I^{0})|}{|I^0|}+\sum_{i=1}^n\ln\left[\frac{|\hat{h}(I^{i})|}{|\hat{h}(I^{i-1})|}\cdot\frac{|I^{i-1}|}{|I^i|}\right]\\
&=&\ln\frac{|\hat{h}(I^{0})|}{|I^0|}+\sum_{i=1}^n\ln[1+\O(\la^{\sqrt{n}})].
\end{eqnarray*}
The sum above is summable, thus
\begin{eqnarray}
\label{lim}
\lim_{n\to\infty}\ln\frac{|\hat{h}(I^{n})|}{|I^n|}=\ln C_8.
\end{eqnarray}
Therefore
\begin{eqnarray*}
\ln\frac{|\hat{h}(I^n_\beta)|}{|I^n_\beta|}&=& \ln\frac{|\hat{h}(I^n_\beta)|}{|\hat{h}(I^n)|}+\ln\frac{|\hat{h}(I^n)|}{|I^n|}+\ln\frac{|I^n|}{|I^n_\beta|}\\
&=&\ln\left(1+\O(\la^{\sqrt{n}})\right)+\ln\frac{|\hat{h}(I^n)|}{|I^n|}.
\end{eqnarray*}
Then there is $C_f >0$ such that
\begin{eqnarray}
\lim_{n\to\infty}\ln\frac{|\hat{h}(I^n_\beta)|}{|I^n_\beta|}=\ln C_f.
\end{eqnarray}
So  by (\ref{pri})
\begin{equation}\label{abcd} \lim_n \frac{|R^nf_A(\hat{h}(I^n_\beta))|}{|R^nf(I^n_\beta)|}= C_f.\end{equation}
Since  $f_A$ is also  an weak affine model of $g$, so there exists a conjugacy $\tilde{h}$ between $f_A$ and $g$,  $\tilde{h}\circ f_A = g\circ \tilde{h}$. As in the proof of (\ref{abcd})   we can show that there is $C_g > 0$ such that

 $$\lim_n \frac{|R^ng(\tilde{h}\circ \hat{h}(I^n_\beta))|}{|R^nf_A(\hat{h}(I^n_\beta))|}= C_g.$$
Since $h= \tilde{h}\circ \hat{h}$ it follows that

$$\lim_n \frac{|R^ng(h(I^n_\beta))|}{|R^nf(I^n_\beta)|}= C_f\cdot C_g=C_8.$$
\cqd
\begin{lemma}
\label{lip} Let $f,g\in\mathcal{B}^{2+\nu}_{k,*}$ be  g.i.e.m. with   the same combinatorics  and they admit the same weak affine model (they are not necessarily  break-equivalents). The conjugacy $h:[0,1)\to[0,1)$ is Lipschitz.
\end{lemma}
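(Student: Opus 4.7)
The plan is to first establish a uniform bound $|h(J)|\le K|J|$ for every element $J$ of the dynamical partition $\mathcal{P}^n = \{f^j(I^n_\alpha):\alpha\in\A,\ 0\le j<q^n_\alpha\}$ across all $n$, and then to promote this to a Lipschitz bound on $h$ using the density of partition endpoints (which follows from the absence of wandering intervals). For a fixed $J=f^j(I^n_\alpha)$, the tower $I^n_\alpha,f(I^n_\alpha),\ldots,f^{j-1}(I^n_\alpha)$ consists of pairwise disjoint intervals, so the $C^{2+\nu}$ regularity together with the $k$-bounded combinatorics yields a Denjoy--Koksma style bounded distortion estimate for $Df^j$ on $I^n_\alpha$ (and for $Dg^j$ on $h(I^n_\alpha)$), with a constant independent of $n,\alpha,j$. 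Combining this with the Mean Value Theorem applied to $f^j|_{I^n_\alpha}$ and $g^j|_{h(I^n_\alpha)}$ reduces the task to estimating
$$\frac{|h(J)|}{|J|} \;\le\; C\cdot \frac{Dg^j(h(x))}{Df^j(x)}\cdot \frac{|h(I^n_\alpha)|}{|I^n_\alpha|}$$
for some $x\in I^n_\alpha$, with $C$ uniform in $n,\alpha,j$.

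For the derivative ratio I would invoke Lemma~\ref{unifor}. Since $\psi_0\equiv 0$ and $\|\psi_{n+1}-\psi_n\|_\infty=\O(\lambda^{\sqrt n})$ is summable, the pointwise limit $\psi:=\lim_n \psi_n$ exists and is uniformly bounded. The telescoping identity from the proof of Lemma~\ref{unifor} gives, for any $m$ large enough that none of $x,f(x),\ldots,f^{j-1}(x)$ lies in $I^m$,
$$\ln\frac{Dg^j(h(x))}{Df^j(x)}=\psi_m(f^j(x))-\psi_m(x),$$
so in the limit $Dg^j(h(x))/Df^j(x)\le e^{2\|\psi\|_\infty}$ uniformly in $x,j,n$. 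For the base ratio I would write $|I^n_\alpha|=\zeta^n_{f,\alpha}|I^n_f|$ and $|h(I^n_\alpha)|=\zeta^n_{g,\alpha}|I^n_g|$, using that $h$ sends the nest $I^n_f$ onto $I^n_g$. By Lemma~\ref{vp} applied to both $f$ and $g$ with their common weak affine model, $\zeta^n_f$ and $\zeta^n_g$ both stay within $\O(\lambda^{\sqrt{n/2}})$ of the affine model's partition vector $\tilde\zeta^n$, which is bounded below by $c>0$; hence $\zeta^n_{g,\alpha}/\zeta^n_{f,\alpha}$ is uniformly bounded. For $|I^n_g|/|I^n_f|$ I would decompose $h=\tilde h\circ\hat h$ through the common affine model and apply the convergence $|\hat h(I^n)|/|I^n|\to C_8$ from \eqref{lim} in the proof of Lemma~\ref{const} to each factor, obtaining that $|I^n_g|/|I^n_f|$ tends to a positive constant.

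Combining these two bounds yields a uniform constant $K$ with $|h(J)|\le K|J|$ on every $J\in\mathcal{P}^n$ and every $n$. Since $f$ has genus one, $k$-bounded combinatorics, and no connections, it has no wandering intervals, so $\bigcup_n\partial\mathcal{P}^n$ is dense in $[0,1)$. For arbitrary $x<y$, I would choose $\mathcal{P}^n$-endpoints $a_n\le x\le y\le b_n$ with $a_n\to x^-$ and $b_n\to y^+$; then $[a_n,b_n]$ is a finite union of $\mathcal{P}^n$-elements, so
$$|h(b_n)-h(a_n)|\le \sum_{J\in\mathcal{P}^n,\ J\subset[a_n,b_n]}|h(J)|\le K(b_n-a_n),$$
and continuity of $h$ yields $|h(y)-h(x)|\le K(y-x)$ in the limit. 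The main obstacle I expect is keeping the Denjoy--Koksma distortion constant uniform across all levels $n$ and all tower heights $j\le q^n_\alpha$; this relies essentially on the $k$-bounded combinatorics to control the return times and on the disjointness of the tower to keep $\sum_j|f^j(I^n_\alpha)|\le 1$, which together let one sum the Hölder variations of $\log Df$ along the orbit uniformly in $n$.
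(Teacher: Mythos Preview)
Your proposal is correct and follows essentially the same strategy as the paper: bound $|h(J)|/|J|$ uniformly over all atoms $J\in\mathcal P^n$ via (i) a Denjoy--Koksma distortion estimate coming from the pairwise disjointness of the tower, (ii) the uniform boundedness of $\psi_n$ from Lemma~\ref{unifor}, and (iii) a uniform bound on a base ratio furnished by the common weak affine model, and then pass to arbitrary intervals by a covering/density argument. The paper differs only in bookkeeping: it iterates each atom $J_n$ \emph{forward} by $f^{i_n}$ so as to land on $R^nf(I^n_\alpha)$ and then invokes Lemma~\ref{const} for the ratio $|R^ng(h(I^n_\alpha))|/|R^nf(I^n_\alpha)|$, whereas you iterate from the base $I^n_\alpha$ to $J$ and bound $|h(I^n_\alpha)|/|I^n_\alpha|$ directly via Lemma~\ref{vp} together with \eqref{lim}; the covering step at the end is also organized slightly differently (the paper uses a nested exhaustion by maximal partition atoms at increasing levels rather than endpoints converging from outside). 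One remark: your final worry about keeping the distortion constant uniform in $n,\alpha,j$ is unnecessary --- since $\log Df$ has bounded variation and the tower intervals are pairwise disjoint in $[0,1)$, the classical estimate $|\log Df^j(y)-\log Df^j(y')|\le \mathrm{Var}(\log Df)$ already gives a constant independent of $n$, $\alpha$, $j$, with no appeal to H\"older regularity.
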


\dem Let $J_n\in \mathcal{P}^n.$ Note that $i_n$ is constant on $J_n.$  By the Mean Value Theorem there exist  $y, y' \in J_n$ such that
\begin{eqnarray*}
\frac{|h(J_n)|}{|J_n|}&=&\frac{Df^{i_n}(y')}{Dg^{i_n}(h(y))}\cdot  \frac{|g^{i_n}(h(J_n))|}{|f^{i_n}(J_n)|}.\\
\end{eqnarray*}
As $\{f^j(J_n)\}_{j=0}^{i_n-1}$ is a pairwise disjoint family of intervals it follows that

$$\exp(-V)\leq\frac{Df^{i_n}(y')}{Df^{i_n}(y)}\leq \exp(V),$$
where $V=\mathrm{Var}(\log Df).$ So
$$\frac{Df^{i_n}(y')}{Dg^{i_n}(h(y))}\leq\exp(V)\frac{Df^{i_n}(y)}{Dg^{i_n}(h(y))}\cdot\exp(\psi_n(y)).$$
Since that $f^{i_n}(J_n)=R^nf(I^n_\alpha)$ and $g^{i_n}(h(J_n))=R^ng(h(I^n_\alpha))$ for some $\al\in\A$ we have
$$C_9:=\sup_{n}\sup_{J_n\in\mathcal{P}_n}\frac{|g^{i_n}(h(J_n))|}{|f^{i_n}(J_n)|}=\sup_n\sup_{\al\in\A}\frac{|R^ng(h(I^n_\alpha))|}{|R^nf(I^n_\alpha)|}$$
is finite by Lemma \ref{const}.

Note that $C_{10}:=\sup_n\sup_{y\in[0,1)}\{\exp(\psi_n(y))\}$ is also finite.  Then
\begin{eqnarray*}
\frac{|h(J_n)|}{|J_n|}&\leq&C_9\cdot\exp(V)\frac{Df^{i_n}(y)}{Dg^{i_n}(h(y))}\\
&\leq & C_9\cdot\exp(V)\cdot\exp(\psi_n(y))\\
&\leq&C_9\cdot C_{10}\cdot\exp(V)
\end{eqnarray*}
Therefore there is $C_{11}>0$ such that for $n$ large enough
$$|h(J_n)|\leq C_{11}\cdot |J_n|.$$
Let $x,y\in[0,1)$ be such that $x<y$ and $A=[x,y).$ Define$$\mathcal{F}_1=\{J_1\in\mathcal{P}^1\; \setminus\; J_1^s\subset A\}=\{J_1^1,\ldots,J_1^{s_1}\}$$
and
\begin{eqnarray*}
\mathcal{F}_n&=&\{J_n\in\mathcal{P}^n\; \setminus\; J_n\subset A \text { and } J_n\cap J =\varnothing  \text{ for every } J \in \mathcal{F}_{i}, \ i < n\}\\
&=&\{J_n^1,\ldots,J_n^{s_n}\}.
\end{eqnarray*}
Note that if $n\not=m$ then $J_n^s\cap J_m^r=\varnothing$ for all $1\leq s\leq s_n, \; 1\leq r\leq s_m.$

It is clear that $$\bigcup_{i=1}^n\bigcup_{j=1}^{s_i}J^j_i\nearrow A \;\text{   and   }\; \bigcup_{i=1}^n\bigcup_{j=1}^{s_i}h(J^j_i)\nearrow h(A).$$Therefore
\begin{eqnarray*}
|h(A)| & = & \lim_n \sum_{i=1}^n\sum_{j=1}^{s_i}|h(J^j_i)|\\
&\leq & C_{10}\cdot \lim_n \sum_{i=1}^n\sum_{j=1}^{s_i}|J^j_i|\\
&\leq & C_{10}\cdot |A|.
\end{eqnarray*}

\cqd

\begin{lemma} \label{dist45} Let $f,g\in\mathcal{B}^{2+\nu}_{k,*}$ be  g.i.e.m. with   the same combinatorics and assume they admit  the same weak affine model (they are not necessarily  break-equivalents). Let $x$ in $[0,1)$, and let $J_n \in \mathcal{P}^n$ be  such that $x \in J_n$. For every $y_n,  y_n' \in J_n$  we have
$$\lim_n  \frac{Df^{i_n(x)}(y_n')}{Df^{i_n(x)}(y_n)}=1.$$
\end{lemma}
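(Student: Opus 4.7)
The plan is to exploit the exponential shrinking of atoms of $\mathcal{P}^n$ (coming from the $k$-bounded combinatorics), the $C^\nu$-regularity of the nonlinearity $\eta=D^2f/Df$, and the $C^2$-convergence $d_{C^2}(R^nf,R^nf_A)=O(\lambda^{\sqrt n})$ given by Theorem \ref{teo1}.

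First I would locate $J_n$ in the Rokhlin tower. Since $J_n\in\mathcal{P}^n$, there is a unique $\beta\in\mathcal{A}$ and a unique $0\leq i<q^n_\beta$ with $J_n=f^i(I^n_\beta)$, so that $i_n(x)=q^n_\beta-i$ and $\{f^j(J_n)\}_{j=0}^{i_n-1}$ are precisely the atoms of $\mathcal{P}^n$ at levels $i,i+1,\ldots,q^n_\beta-1$ of the tower over $I^n_\beta$. By $k$-bounded combinatorics each of these atoms has diameter $\leq C\rho^n$ for some $\rho<1$, and their total length is $\leq 1$.

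Next, using the chain-rule decomposition $f^{q^n_\beta}=f^{i_n}\circ f^i$ on $I^n_\beta$, writing $z_n=f^{-i}(y_n)$, $z_n'=f^{-i}(y_n')\in I^n_\beta$ gives
\[
\log\frac{Df^{i_n}(y_n')}{Df^{i_n}(y_n)}=\log\frac{DR^nf(z_n')}{DR^nf(z_n)}-\log\frac{Df^{i}(z_n')}{Df^i(z_n)}.
\]
Theorem \ref{teo1} says $Z_{I^n_\beta}R^nf$ is $O(\lambda^{\sqrt n})$-close in $C^2$ to the identity, so $\log DR^nf$ is $O(\lambda^{\sqrt n})$-close to the constant $\omega^n_\beta$ on $I^n_\beta$; in particular the first term on the right is $O(\lambda^{\sqrt n})$.

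The remaining task is to show that the second term tends to $0$. Writing it as $\sum_{k=0}^{i-1}\int_{f^kz_n}^{f^kz_n'}\eta(t)\,dt$ and using the $C^\nu$-regularity of $\eta$ together with the diameter bound $|f^k(I^n_\beta)|\leq C\rho^n$, the ``Hölder remainder'' contributes at most $\sum_k O(|f^k(I^n_\beta)|^{1+\nu})=O(\rho^{n\nu})$ by disjointness of $\{f^k(I^n_\beta)\}$ and the $\sum_k|f^k(I^n_\beta)|\leq 1$. What remains is the ``linear'' part $\sum_k\eta(f^kz_n)(f^kz_n'-f^kz_n)$, which by the bounded-distortion estimate of Lemma \ref{lip} is comparable to $\frac{|z_n'-z_n|}{|I^n_\beta|}\sum_k\eta(f^kz_n)|f^k(I^n_\beta)|$, i.e.\ to a Riemann sum for $\int_{T_{<i}}\eta$, where $T_{<i}=\bigsqcup_{k<i}f^k(I^n_\beta)$. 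Writing $T_{<i}=T\setminus T_{\geq i}$ and using the identity $\int_T\eta\,du=\int_{I^n_\beta}(\log DR^nf)'(z)\,dz=O(\lambda^{\sqrt n})$ (from Theorem \ref{teo1}, by change of variables), together with the symmetric expression of $\int_{T_{\geq i}}\eta$ as the variation of $\log Df^{i_n}$ over $J_n$, yields an implicit relation between the two distortions which, combined with the Hölder remainder bound, forces each of them to be $O(\rho^{n\nu}+\lambda^{\sqrt n})\to 0$.

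The main obstacle is precisely this last step: the standard bounded-distortion argument only gives that both partial-tower distortions are bounded, and the mean-nonlinearity hypothesis $\int\eta=0$ is only strong enough to control the \emph{full}-tower integral. Extracting smallness for the \emph{partial}-tower integral requires carefully coupling the $C^2$-convergence (pointwise bound $(\log DR^nf)'=O(\lambda^{\sqrt n}/|I^n_\beta|)$) with the Rokhlin-tower symmetry between $f^i$ on $I^n_\beta$ and $f^{i_n}$ on $J_n$, and this bootstrap is the delicate point of the proof.
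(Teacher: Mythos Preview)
Your decomposition at a single level $n$ via $f^{q^n_\beta}=f^{i_n}\circ f^i$ correctly reduces the problem to controlling $B_n:=\log\frac{Df^i(z_n')}{Df^i(z_n)}$, but the ``bootstrap'' you flag as delicate does not close. All the relations you assemble --- $A_n+B_n=O(\lambda^{\sqrt n})$, the Riemann-sum approximation $B_n\approx r\int_{T_{<i}}\eta$ with $r=|z_n'-z_n|/|I^n_\beta|$, the full-tower identity $\int_T\eta=O(\lambda^{\sqrt n})$, and the observation that $\int_{T_{\geq i}}\eta$ equals the variation of $\log Df^{i_n}$ across $J_n$ --- are simultaneously consistent with $\int_{T_{\geq i}}\eta=-\int_{T_{<i}}\eta$ being a \emph{fixed nonzero constant}. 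Indeed, taking $y_n,y_n'$ to be the endpoints of $J_n$ (so $r=1$) collapses your system to the tautology $\Phi_n=\Phi_n+o(1)$. The zero-mean-nonlinearity hypothesis only controls $\int_T\eta$ over the \emph{full} tower, and the Rokhlin-tower symmetry gives no leverage on the split point $i$; so with level-$n$ data alone the partial-tower distortion is merely bounded, not small.

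The paper bypasses this obstruction by telescoping across \emph{all} intermediate levels: one writes $f^{i_n}$ as the composition, over those $j\leq n$ with $i_{j-1}<i_j$, of a single application of $R^jf$ at the point $f^{i_j}(y)$. Each such factor contributes a distortion term
$\log\frac{DR^jf(f^{i_j}y_n')}{DR^jf(f^{i_j}y_n)}$
which is estimated in two regimes. For small $j$ (say $j\le n/2$) one uses the uniform Lipschitz bound $|\log\frac{DR^jf(z')}{DR^jf(z)}|\le C|z'-z|/|I^j_\beta|$ from \cite{cunhasmania} together with the fact that $f^{i_j}(J_n)$ is a level-$n$ atom inside the level-$j$ interval $I^j_\beta$, hence $|f^{i_j}y_n-f^{i_j}y_n'|/|I^j_\beta|\le\theta^{n-j}$ for some $\theta<1$; this gives $O(\theta^{n-j})$. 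For large $j$ one uses the $C^2$-convergence of $R^jf$ directly, giving $O(\lambda^{\sqrt j})$. Summing, $\sum_{j\le n/2}\theta^{n-j}+\sum_{j>n/2}\lambda^{\sqrt j}\to 0$. The point is that the ratio-of-scales estimate $\theta^{n-j}$, available only at intermediate levels, is precisely what your single-level decomposition lacks.
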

\begin{proof} Note that $f^{i_j}(J_n)\in \mathcal{P}^n$ and $f^{i_j}(J_n) \subset I^j_\beta$, for some $\beta \in \mathcal{A}$. By \cite{cunhasmania}, there exists $\theta < 1$ such that
$$\frac{|f^{i_j}y_n-f^{i_j}y_n'|}{|I^j_\beta|} \leq  \theta^{n-j}. $$
By \cite{cunhasmania} there exists $C$ such that for every $z,z' \in I^n_\beta$ we have
$$\big| \ln \frac{D(R^jf)(z')}{D(R^jf)(z)}\big| \leq C \frac{|z'-z|}{|I^n_\beta|}.$$
Again by \cite{cunhasmania} we have
$$\big| \ln \frac{D(R^jf)(z')}{D(R^jf)(z)}\big|=O(\lambda^{\sqrt{j}}).$$
So
 \begin{eqnarray*}
 \ln \frac{Df^{i_n(x)}(y_n')}{Df^{i_n(x)}(y_n)}&=&\sum_{j\leq n/2, \ i_{j-1} < i_j} \ln \frac{D(R^jf)(f^{i_j}(y'_n))}{D(R^jf)(f^{i_j}(y_n))} \\
 &+& \sum_{n/2< j\leq n, \ i_{j-1} < i_j} \ln \frac{D(R^jf)(f^{i_j}(y'_n))}{D(R^jf)(f^{i_j}(y_n))} \\
 &=&    O(\sum_{ j\leq n/2}  \theta^{n-j}) +  O( \sum_{n/2< j\leq n}  \lambda^{\sqrt{j}})\\
 &\rightarrow_n& 0
\end{eqnarray*}
\end{proof}

\begin{pro}\label{c_1} Let $f,g\in\mathcal{B}^{2+\nu}_{k,*}$ be  g.i.e.m. with the  same combinatorics. Assume that  they are   break-equivalents.  Then the conjugacy $h:[0,1)\to[0,1)$ is $C^1.$
\end{pro}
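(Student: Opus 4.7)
The strategy is to exhibit the derivative of $h$ explicitly as the continuous function $D(x):=C_8\, e^{\psi(x)}$, where $C_8$ is the constant produced by Lemma \ref{const} and $\psi$ is the solution of the cohomological equation constructed in Lemma \ref{sol.coh}. Because $f$ and $g$ are break-equivalents with the same combinatorics, Theorem \ref{teo2} yields $f_A=g_A$, so $f$ and $g$ admit a common weak affine model; this is precisely the hypothesis needed to invoke Lemmas \ref{const}, \ref{lip} and \ref{dist45}. Lemma \ref{sol.coh} delivers continuity of $\psi$ and Lemma \ref{unifor} guarantees that $\psi_n\to\psi$ uniformly on $[0,1)$, so $D$ is continuous.

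The core computation is to show that for every $x\in[0,1)$, if $J_n=J_n(x)\in\mathcal{P}^n$ is the element containing $x$, then
$$\lim_{n\to\infty}\frac{|h(J_n)|}{|J_n|}=D(x).$$
For this I would reuse the Mean Value Theorem decomposition from the proof of Lemma \ref{lip}: on $J_n$ both $f^{i_n}$ and $g^{i_n}$ are smooth, so there exist $y_n,y_n'\in J_n$ with
$$\frac{|h(J_n)|}{|J_n|}=\frac{Df^{i_n}(y_n')}{Dg^{i_n}(h(y_n))}\cdot\frac{|g^{i_n}(h(J_n))|}{|f^{i_n}(J_n)|}.$$
Writing the first factor as $\bigl(Df^{i_n}(y_n')/Df^{i_n}(y_n)\bigr)\cdot e^{\psi_n(y_n)}$, Lemma \ref{dist45} forces the first quotient to tend to $1$, while uniform convergence $\psi_n\to\psi$, continuity of $\psi$ and $|J_n|\to 0$ give $e^{\psi_n(y_n)}\to e^{\psi(x)}$. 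The second factor equals $|R^ng(h(I^n_\beta))|/|R^nf(I^n_\beta)|$ for some $\beta\in\mathcal{A}$, which converges to $C_8$ uniformly in $\beta$ by Lemma \ref{const}. Multiplying produces the claimed limit $D(x)$.

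Finally I would upgrade this pointwise scaling limit to a genuine $C^1$ statement via Lebesgue differentiation. Lemma \ref{lip} says $h$ is Lipschitz, hence absolutely continuous; since $f$ has no wandering intervals the meshes of $\mathcal{P}^n$ shrink to zero, so $\{J_n(x)\}_n$ is a sequence of intervals containing $x$ whose lengths go to $0$. The one-dimensional Lebesgue differentiation theorem then gives $\lim_n |h(J_n(x))|/|J_n(x)|=h'(x)$ for a.e.\ $x$, so combined with the previous paragraph $h'(x)=D(x)$ a.e. Absolute continuity now yields $h(y)-h(x)=\int_x^y D(t)\,dt$ for all $x,y$, and because $D$ is continuous the fundamental theorem of calculus promotes this to $h\in C^1$ with $h'\equiv D$.

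The plan has no single hard step, since the real work was packaged into the previous lemmas; the main thing to check carefully is that the hypothesis of break-equivalence really is used twice — once to invoke Theorem \ref{teo2} and produce a common affine model (needed for Lemmas \ref{const}, \ref{lip}, \ref{dist45}), and once to ensure that $\psi$ is globally continuous (via Lemma \ref{sol.coh}) rather than suffering jumps of size $\ln BP_f-\ln BP_g$ at break points, which would destroy $C^1$ regularity at those points.
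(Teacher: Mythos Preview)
Your proposal is correct and follows essentially the same route as the paper: use the Mean Value Theorem to factor $|h(J_n)|/|J_n|$, apply Lemmas \ref{dist45}, \ref{unifor}, and \ref{const} to identify the limit as $C_8 e^{\psi(x)}$, then use that $h$ is Lipschitz (Lemma \ref{lip}) to write $h$ as the integral of its a.e.\ derivative and conclude $h\in C^1$. Your write-up is in fact slightly more explicit than the paper's in two places --- you spell out that break-equivalence feeds into the argument via Theorem \ref{teo2} (to supply the common weak affine model required by Lemmas \ref{const}, \ref{lip}, \ref{dist45}) and via Lemma \ref{sol.coh} (to make $\psi$ continuous everywhere) --- but the underlying argument is the same.
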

\dem
 Note that $Dh(x)$ exists for almost every $x\in[0,1)$ and by Lemma \ref{lip} the map $h$ is the integral of its derivative. Let $x_0\in[0,1)$ be such that $Dh(x_0)$ exists. For all $n\geq0$ there is $J_n\in\mathcal{P}^n$ such that $x_0\in J_n.$ Then
\begin{eqnarray}
\label{deriv}
\lim_{n\to\infty}\frac{|h(J_n)|}{|J_n|}=Dh(x_0).
\end{eqnarray}
Let $\alpha\in\A$ be such that
$$f^{i_n(x_0)}(J_n)=R^nf(I^n_{\al}).$$
We also have that $g^{i_n(x_0)}(h(J_n))=R^ng(h(I^n_{\al})).$ Let $y'\in J_n$ be  such that
$$|J_n|=\frac{|R^nf(I^n_{\al})|}{Df^{i_n(x_0)}(y')}.$$
Analogously let $y \in J_n$ be such that
$$|h(J_n)|=\frac{|R^ng(h(I^n_{\al}))|}{Dg^{i_n(x_0)}(h(y))}.$$
Therefore
\begin{eqnarray*}
\frac{|h(J_n)|}{|J_n|}&=&\frac{Df^{i_n(x_0)}(y')}{Dg^{i_n(x_0)}(h(y))}\cdot \frac{|R^ng(h(I^n_{\al}))|}{|R^nf(I^n_{\al})|}\\
&=&\frac{Df^{i_n(x_0)}(y')}{Df^{i_n(x_0)}(y)}\cdot e^{\psi_n(y)}\cdot \frac{|R^ng(h(I^n_{\al}))|}{|R^nf(I^n_{\al})|}.
\end{eqnarray*}
When  $n$ converges to  infinity we have, by  Lemma \ref{unifor}, Lemma \ref{const} and Lemma \ref{dist45}  that

\begin{equation} \label{form.lip} Dh(x_0)=C_{8}\cdot e^{\psi(x_0)}.\end{equation}
Writing
$$h(x)=\int_{0}^x Dh(s)\ ds=\int_{0}^x C_8 \cdot  e^{\psi(s)} \ ds,$$ we have the result.
\cqd

\begin{pro}  If  $f,g\in\mathcal{B}^{2+\nu}_{k,*}$ have the  same combinatorics  and they admit the same weak affine model then $h$ is differentiable at  every point $x_0$ such that $f^n(x_0)\neq 0$ for all $n\geq0$ and (\ref{form.lip}) holds. \end{pro}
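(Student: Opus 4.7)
The plan is to follow the strategy of Proposition~\ref{c_1}, now under the weaker hypothesis that $f$ and $g$ only share a weak affine model rather than being break-equivalent. I first observe that the four tools used there extend immediately: Lemmas~\ref{unifor}, \ref{const}, \ref{lip}, and~\ref{dist45} are each stated and proved already under the hypothesis ``not necessarily break-equivalents,'' and the computation giving the a.e.\ identity $Dh(s) = C_8\, e^{\psi(s)}$ inside the proof of Proposition~\ref{c_1} is merely a combination of these four facts. Combining with the Lipschitz property (Lemma~\ref{lip}), we obtain the pointwise integral representation
\[
h(x) = h(0) + C_8 \int_0^x e^{\psi(s)}\, ds \qquad \text{for all } x \in [0,1).
\]
Consequently, to establish differentiability of $h$ at $x_0$ together with the formula~(\ref{form.lip}) it is enough to show that $x_0$ is a point of continuity of the bounded function $s \mapsto e^{\psi(s)}$. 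My task therefore reduces to proving that, under the current hypothesis, $\psi$ is continuous at every $x_0$ with $f^n(x_0)\ne 0$ for all $n\ge 0$.

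I would do this by repeating the case analysis in the proof of Lemma~\ref{sol.coh}. Cases~ii and~iii there (namely $x_0=0$ and $f^{k_0+1}(x_0)=0$ for some $k_0$) are excluded by hypothesis. Case~i (orbit of $x_0$ avoiding all break points of $f$) yields continuity of $\psi$ at $x_0$ with no break-equivalence needed, since the argument given for it in Lemma~\ref{sol.coh} is purely dynamical. In Case~iv (orbit of $x_0$ passing through some break point $\partial I_\beta$ with $\beta\ne \alpha^*$), the explicit computation inside the proof of Lemma~\ref{sol.coh} shows that the jump of $\psi$ at $x_0$ equals $\ln BP_f(\partial I_\beta) - \ln BP_g(h(\partial I_\beta))$. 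Applying Lemma~\ref{afim.break} to both $f$ and $g$ (which, by hypothesis, admit the common weak affine model $f_A$) yields $BP_f(\partial I_\beta) = BP_{f_A}(\partial \tilde I_\beta) = BP_g(h(\partial I_\beta))$ for every $\beta$ satisfying $\beta\ne\alpha^*$ and $\pi_0(\beta)>1$, so the jump vanishes at these points.

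The main obstacle is the residual case in which the orbit of $x_0$ passes through one of the (at most) two break points not covered by Lemma~\ref{afim.break}, namely $\partial I_{\alpha^*}$ (the discontinuity) and $\partial I_{(\pi_0)^{-1}(1)}$. Here I would use the no-connection property together with the genus-one combinatorial structure (a single discontinuity identified with the cut point of the circle) to argue that the $f$-orbits of these two exceptional break points necessarily meet the $f$-orbit of $0$; consequently any $x_0$ whose forward orbit passes through one of them already satisfies $f^n(x_0)=0$ for some $n\ge0$, contradicting the hypothesis. This step is the delicate combinatorial core of the argument, because no-connections is usually stated as a non-collision condition between orbits of distinct singularities, and one must carefully unpack which ``singularities'' of the g.i.e.m.\ are identified with the cut point when the map is reinterpreted as a circle homeomorphism. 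Once this orbit-identification is done, the four cases together yield continuity of $\psi$ at $x_0$, and (\ref{form.lip}) follows by differentiating the integral representation at the Lebesgue point $x_0$.
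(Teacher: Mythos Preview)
Your proposal is correct and follows essentially the same route as the paper: reduce to continuity of $\psi$ at $x_0$, exclude Cases~ii and~iii by hypothesis, handle Case~i trivially, and handle Case~iv via Lemma~\ref{afim.break} applied to both $f$ and $g$ with the common weak affine model $f_A$. This is exactly what the paper sketches in one sentence.

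One remark: the ``delicate combinatorial core'' you worry about in the residual case is in fact immediate. The point $\partial I_{(\pi_0)^{-1}(1)}$ is $0$ itself, and for the discontinuity $\partial I_{\alpha^*}$ one has $f(\partial I_{\alpha^*})=0$ (this is precisely the identity used in the first claim inside the proof of Lemma~\ref{afim.break}). Hence any $x_0$ whose forward orbit meets either exceptional break point satisfies $f^n(x_0)=0$ for some $n$, and the hypothesis already rules these out; no further unpacking of the genus-one combinatorics or the no-connection condition is needed. With this observation, Case~iv is exhaustive for the remaining break points and your argument closes cleanly.
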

\begin{proof} One can prove that $\psi$ is continuous at every point $x_0$ such that $f^n(x_0)\neq 0$ for all $n\geq0$. Indeed under this assumption we can carry out Cases {\it i} and {\it iv} in the proof of Lemma \ref{sol.coh} and Lemma \ref{afim.break}, as well  the proof of Proposition \ref{c_1}.
\end{proof}
\section{Linearization}
In this section we will show that, for each  $f \in \mathcal{B}^{2+\nu}_{k,*}$  there exists a {\it unique} weak affine  model that is $C^1$ conjugate with $f$.
\begin{lemma}\label{break.dif} Let $f_A$ and $f_B$ be two weak affine models  of $f$.  If $f_A$ and $f_B$  have the same breaks  then $f_A=f_B$. In particular if $f_A$ and $f_B$ are $C^1$ conjugate on the circle then $f_A=f_B$.
\end{lemma}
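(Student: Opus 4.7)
The plan is to reduce the statement to showing that the slope vectors $\omega^A$ and $\omega^B$ of $f_A$ and $f_B$ coincide, and then invoke the uniqueness part of Proposition \ref{affine}. By the Remark following Proposition \ref{affine}, every weak affine model of $f$ has slope vector lying in the common affine space $\omega + E^s_0$, where $\omega$ is produced by Lemma \ref{log.deriv}. Consequently $\omega^A - \omega^B \in E^s_0$. By the definition \eqref{stable} of $E^s_0$ together with the equivariance \eqref{semiconj}, we have $E^s_0 \subset \im\ \Omega_{\pi^0}$, and by antisymmetry of $\Omega_{\pi^0}$ every element of $\im\ \Omega_{\pi^0}$ has vanishing coordinate sum.

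Next I would compute the breaks of an affine g.i.e.m.\ directly in terms of its slope vector. Label $\alpha_j \in \A$ by $\pi_0(\alpha_j) = j$; then the log-break of $f_A$ at the boundary between $\tilde I_{\alpha_j}$ and $\tilde I_{\alpha_{j+1}}$ equals $\omega^A_{\alpha_j} - \omega^A_{\alpha_{j+1}}$, and analogously for $f_B$. Since $f_A$ and $f_B$ share the same combinatorics, the natural combinatorial conjugacy between them matches the $j$-th corner of one with the $j$-th corner of the other, so the hypothesis ``same breaks'' reads
\[
\omega^A_{\alpha_j} - \omega^A_{\alpha_{j+1}} \;=\; \omega^B_{\alpha_j} - \omega^B_{\alpha_{j+1}}, \qquad j = 1, \ldots, d-1,
\]
and telescoping gives $\omega^A - \omega^B = c\,\mathbf{1}$ for some $c \in \mathbb{R}$. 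Combined with the zero-sum constraint from $E^s_0$, this forces $cd = 0$, hence $c = 0$ and $\omega^A = \omega^B$; Proposition \ref{affine} then delivers $f_A = f_B$. For the ``in particular'' clause, a $C^1$ conjugacy between piecewise smooth circle homeomorphisms is automatically break-equivalent (as recorded in the introduction), and since $f_A$ and $f_B$ share the combinatorics of $f$ and, being affine, admit no wandering intervals, this conjugacy is forced to coincide with the natural combinatorial identification, so the hypothesis of the first part is met.

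The core mechanism is the complementarity of the two subspaces $E^s_0$ (contained in the zero-sum hyperplane $\im\ \Omega_{\pi^0}$) and $\mathrm{span}(\mathbf{1})$: they intersect only at the origin. The only subtle point is interpreting ``same breaks'' correctly—unlike Lemma \ref{afim.break}, which yielded only $d-2$ relations because of the approximation error between $f$ and its affine model, here we compare two \emph{exact} affine maps, so all $d-1$ independent break relations are available without loss. That is precisely what is needed to pin down $\omega^A - \omega^B$ to the single direction $\mathbf{1}$, where the $E^s_0$ constraint then kills it.
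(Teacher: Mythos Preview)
Your overall architecture matches the paper's: reduce to $\omega^A-\omega^B=c\,\mathbf 1$ and then kill the constant $c$, invoking Proposition~\ref{affine} for uniqueness. The first reduction and the ``in particular'' clause are fine.

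The gap is in the step ``by antisymmetry of $\Omega_{\pi^0}$ every element of $\im\,\Omega_{\pi^0}$ has vanishing coordinate sum.'' This is false. Antisymmetry gives $(\im\,\Omega_\pi)^\perp=\ker\Omega_\pi$, so your claim is equivalent to $\mathbf 1\in\ker\Omega_\pi$. But from \eqref{tau} one computes $(\Omega_\pi\mathbf 1)_\alpha=\pi_1(\alpha)-\pi_0(\alpha)$, which is nonzero whenever $\pi$ is irreducible. Worse, $\mathbf 1$ can lie \emph{in} $\im\,\Omega_\pi$ for genus-one data: for $\pi_0=(A,B,C)$, $\pi_1=(C,A,B)$ one has $\Omega_\pi(x,y,z)=(z,z,-x-y)$, so $\im\,\Omega_\pi=\{(z,z,w)\}$, which contains $\mathbf 1=(1,1,1)$ and plenty of vectors with nonzero coordinate sum. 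Hence knowing only $\omega^A-\omega^B\in\im\,\Omega_{\pi^0}$ does not force $c=0$.

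Your line of attack is salvageable, but you must use the sharper inclusion $\omega^A-\omega^B\in E^s_0$, not merely $\subset\im\,\Omega_{\pi^0}$. Since each $\Theta_i=\mathbb I+E_{\alpha(1-\ve),\alpha(\ve)}$ has nonnegative entries, $v\geq 0$ entrywise implies $\Theta_i v\geq v$; inductively $\Theta_{0,n}\mathbf 1\geq\mathbf 1$, so $\|\Theta_{0,n}\mathbf 1\|$ is bounded below and $\mathbf 1$ cannot lie in the contracting direction $E^s_0$ of Proposition~\ref{hyp}. That gives $c=0$. The paper instead uses the constraint from \cite[Proposition~2.3]{mmy}: if $H$ conjugates $f_A$ to a rotation then $\sum_i\omega_i\,|H(I^A_i)|=0$ for any slope vector $\omega$ compatible with these combinatorics, and subtracting the two equalities yields $c\sum_i|H(I^A_i)|=c=0$. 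Either route closes the argument; yours just needs the correct reason for $\mathbf 1\notin E^s_0$.

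A minor aside: ``being affine, admit no wandering intervals'' is not the right justification (affine i.e.m.\ can have wandering intervals, cf.\ \cite{mmy}); the absence of wandering intervals here comes from genus one, as the paper notes in the introduction.
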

\begin{proof} If $f_A$ and $f_B$ have the  same breaks, the corresponding slope vectors $\omega^A$ and $\omega^B$ satisfy $\omega^A_i- \omega^B_i=v$, for every $i$.  Let $H$ be a conjugacy of $f_A$ with a rotation on the circle.  By  \cite[Proposition 2.3]{mmy}  we have
$$\sum_i \omega^A_i |H(I^A_i)| = \sum_i \omega^B_i |H(I^A_i)| =0,$$
which implies $\omega^A=\omega^B$. By Proposition \ref{affine} we have that $f_A=f_B$.
\end{proof}
\begin{proof}[Proof of Theorem \ref{teo5}] Let $\omega$ be as in Lemma \ref{log.deriv} and choose $v\in E^s_0\setminus\{0\}$.  By  Proposition \ref{affine} there is an unique weak affine model $g_t$ of $f$ with vector slope $\omega_t:= \omega+ t\cdot v$. By Lemma \ref{break.dif} the break at $0$ is a {\it non constant} linear functional on $t$. Let $t_0$ be the unique parameter such that the break at $0$  of $g_{t_0}$ coincides with the break at $0$ of $f$. Since by Lemma \ref{afim.break} they already have $d-2$ identical breaks and the product  of the breaks is $1$, it follows that all the breaks of $f$ and $g_{t_0}$ coincides. By Theorem \ref{teo4} the conjugacy between $f$ and $g_{t_0}$ is $C^1$. On the other hand, every piecewise affine homeomorphism $g$ of the circle that is $C^1$ conjugate with $f$ is a weak affine model of $f$ with the same break points of $f$, so $g=g_{t_0}$.
\end{proof}

\begin{rem} All weak affine models of the g.i.e.m. $f$ belongs the the one-parameter family $g_t$.  All of them are Lipchitz conjugate with $f$. Only one of these weak models, the {\it strong affine model} $g_{t_0}$, is indeed $C^1$ conjugate with $f$.
\end{rem}

\bibliographystyle{elsarticle-num}

\end{document}